\newtheorem{thm}{Theorem}[subsection]
\newtheorem{lemma}[thm]{Lemma}
\newtheorem{lemmadefi}[thm]{Lemma - Definition}
\newtheorem{cor}[thm]{Corollary}
\newtheorem{fact}[thm]{Fact}
\newtheorem{teoalpha}{Theorem}
\theoremstyle{remark}
\newtheorem{remark}[thm]{Remark}
\theoremstyle{definition}
\newtheorem{defi}[thm]{Definition}
\newtheorem{nota}[thm]{}
\numberwithin{equation}{section}
\newcommand{\wt}{\widetilde}
\newcommand{\wh}{\widehat}
\newcommand{\ov}{\overline}
\newcommand{\un}{\underline}
\newcommand{\Aut}{\operatorname{Aut}}
\newcommand{\Hom}{\operatorname{Hom}}
\newcommand{\Spec}{\operatorname{Spec}}
\newcommand{\Pic}{\operatorname{Pic}}
\newcommand{\Jac}{\operatorname{Jac}}
\newcommand{\Set}{{\operatorname{Set}}^1}
\def \val{{\rm val}}
\def \cyc{{\rm cyc}}
\newcommand{\calA}{{ \mathcal A}}
\newcommand{\calE}{{ \mathcal E}}
\newcommand{\calG}{{ \mathcal G}}
\newcommand{\calI}{{ \mathcal I}}
\newcommand{\calN}{{ \mathcal N}}
\newcommand{\calO}{{ \mathcal O}}
\newcommand{\calP}{ {\mathcal P} }
\newcommand{\calX}{{\mathcal X}}
\newcommand{\calY}{{\mathcal Y}}
\newcommand{\bbA}{{\mathbb A}}
\newcommand{\bbG}{{\mathbb G}}
\newcommand{\bbN}{{\mathbb N}}
\newcommand{\bbP}{{\mathbb P}}
\newcommand{\bbQ}{{\mathbb Q}}
\newcommand{\bbR}{{\mathbb R}}
\newcommand{\bbZ}{{\mathbb Z}}
\newcommand{\R}{{\mathbb R}}
\newcommand{\Z}{{\mathbb Z}}
\newcommand{\m}{\mathfrak{m}}
\def \Gm{{\bbG_m}}
\def \GL{{\rm GL}}
\newcommand{\Mgt}{M_g^{\rm tr}}
\newcommand{\Agt}{A_g^{{\rm tr}}}
\newcommand{\tgt}{t_g^{\rm tr}}
\def \MMg{\mathcal{M}_g}
\def \MMgb{\ov{\mathcal{M}}_g}
\def \AAg{\mathcal{A}_g}
\def \AAgb{\ov{\mathcal{A}}_g}
\def \AAgmod{\ov{\mathcal{A}}_g^{\rm mod}}
\def \tgb{\ov{t}_g}
\def \tgt{t_g^{\rm tr}}
\def \red{{\rm red}}
\def \trop{{\rm trop}}
\def \Del{{\rm{Del}}}
\def \Null{{\rm Null}}
\def\PXgb{\overline{P^{g-1}_X}}
\def\TXg{\Theta_X}
\begin{document}

\title[Tropicalizing vs Compactifying  the Torelli morphism]{Tropicalizing vs Compactifying \\ the Torelli morphism}

\author{Filippo Viviani}
\address{ Dipartimento di Matematica,
Universit\`a Roma Tre,
Largo S. Leonardo Murialdo 1,
00146 Roma (Italy)}
\email{filippo.viviani@gmail.it}

\begin{abstract}
In this paper, we compare the compactified Torelli morphism $\tgb$ (as defined by V. Alexeev) and the tropical Torelli map $\tgt$ (as defined by the author in a joint work
with S. Brannetti and M. Melo, and furthered studied by M. Chan).
Our  aim is twofold: on one hand, we will review the construction and main properties of $\tgb$ and $\tgt$, focusing in particular on the description of their fibers achieved by
the author in joint works with L. Caporaso;  on the other hand, we will clarify the relationship between $\tgb$ and $\tgt$ via the introduction of the reduction maps and the tropicalization
maps.
\end{abstract}

\maketitle


\section{Introduction}

The Torelli morphism (or map) $t_g:\MMg\to \AAg$ (for $g\geq 2$) is the morphism from the moduli stack $\MMg$  of
connected smooth projective curves of genus $g$ to the moduli stack $\AAg$ of principally polarized (or p.p. for short) abelian varieties of dimension $g$, sending a curve into its Jacobian.
The Torelli morphism $t_g$ has played a central role
since the early developments of classical algebraic geometry since it establishes a bridge between
the two most studied moduli stacks, namely $\MMg$ and $\AAg$. We just want to mention two classical 
results on the Torelli morphism:
the Torelli theorem which says that $t_g$ is injective (on geometric points); the Schottky problem which asks for a characterization of the image of $t_g$.


In this paper, we are interested in two other maps that have been recently defined
starting from the Torelli morphism: the \textbf{compactified Torelli morphism}  $\tgb$ (defined in \cite{Ale2} based upon the work \cite{Ale1}) and the \textbf{tropical Torelli map} $\tgt$ (defined in 
\cite{BMV} based upon the work \cite{CV1}, and furthered studied in \cite{Cha}).
Our  aim is twofold: on one hand, we will review the definitions and main properties of the above maps, focusing in particular on the description of their fibers achieved in
\cite{CV1} and \cite{CV2};   on the other hand, we will explain and better clarify the relationship between $\tgb$ and $\tgt$.
Before we can state the results of this paper, we need to briefly recall the definitions of $\tgb$ and $\tgt$.


The moduli stacks $\MMg$ and $\AAg$ are irreducible and separated but not proper. However, they both admit a modular compactification: $\MMg$ is an open and dense substack of the
Deligne-Mumford \cite{DM} moduli stack $\MMgb$ of stable curves (which we review in Subsection \ref{S:stablecurves}); $\AAg$ is an open and dense substack of the main irreducible
component $\AAgb$ of the Alexeev \cite{Ale1} moduli stack of p.p. stable semi-abelic pairs (which we review in Subsection \ref{S:ppSSAP}).

V. Alexeev has shown in \cite{Ale2} that the Torelli morphism $t_g$ extends to a modular morphism $\ov{t}_g:\MMgb\to \AAgb$, called the \emph{compactified Torelli morphism}, which sends
a stable curve  into its compactified Jacobian of degree $g-1$; see Subsection \ref{S:comp-Tor} for more details.
The fibers of the compactified Torelli morphism $\tgb$ have been described by Caporaso-Viviani in \cite{CV2}.
In particular, it turns out that $\tgb$ is injective on the open subset of $\MMgb$ consisting of stable
curves that do not have separating edges nor separating pairs of edges or, in other words, stable curves whose dual graph is $3$-edge-connected (see Corollary \ref{C:inj-tgb}).
We recall the precise description of the fibers of $\tgb$ in Subsection
\ref{S:fibers-tgb}, where we also point out a new interesting relationship with the image of the canonical
morphism (see Theorem \ref{T:Tor-canonical}).

On the tropical side of the picture, Brannetti-Melo-Viviani \cite{BMV} (based on the work of Caporaso-Viviani in \cite{CV1}) constructed
the moduli space $\Mgt$ of tropical curves of genus $g$ and the  moduli space $\Agt$ of tropical p.p.   abelian varieties of dimension $g$. The spaces $\Mgt$ and $\Agt$ are constructed in loc. cit. as stacky fans, i.e. connected topological spaces obtained by gluing in a suitable way cones quotiented out by finite automorphism groups.
In particular, $\Mgt$ and $\Agt$ are endowed with a natural topology, which we call the  euclidean topology,
with respect to which they are Hausdorff spaces, as proved by L. Caporaso \cite{Cap1} for $\Mgt$
and by M. Chan \cite{Cha} for $\Agt$. We refer the reader to Subsection \ref{S:trop-curves} for more details
on $\Mgt$ and to Subsection \ref{S:tropabvar} for more details on $\Agt$.

In \cite{BMV}, the authors also construct a map $\tgt:\Mgt\to \Agt$, called the \emph{tropical Torelli map}, which sends a tropical curve into its tropical Jacobian. In loc. cit., it is shown that $\tgt$ is a
map of stacky fans; in particular, it is a continuous map. The fibers of the tropical Torelli map $\tgt$ have been described by Caporaso-Viviani in \cite{CV1}. In particular, it turns out that $\tgt$ is
injective on the locally closed subset of $\Mgt$ consisting of tropical curves whose underlying graph is $3$-vertex-connected and has genus $g$ (see Corollary \ref{C:inj-tgt}).
We recall the precise description of the fibers of $\tgt$ in Subsection \ref{S:fibers-tgt} (see Fact \ref{F:tropTorfibers}).

The main motivation of this work is  the following natural

\vspace{0,2cm}

{\bf Question}:
\emph{What is the relationship between the compactified Torelli morphism $\tgb$ and the tropical Torelli map
$\tgt$?}

\vspace{0,2cm}



In order to answer the above question, let us fix
a complete DVR (= discrete valuation ring) $R$ with maximal ideal $\m$ and assume that its residue field $k:=R/\m$ is
algebraically closed.
Let $K$ be the quotient field of $R$ and $\val: K\to \bbZ\cup \{\infty\}$ the associated valuation.
Note that the valuation $\val$ induces a topology on $K$, which is called the non-archimedean topology (see Notations \ref{S:notations}).
The sets $\MMg(K)$ and $\AAg(K)$ of $K$-valued points of, respectively, $\MMg$ and $\AAg$
inherit a topology from the topology on $K$, which we also call non-archimedean topology.
The classical Torelli morphism $t_g:\MMg\to \AAg$ induces a continuous map $\MMg(K)\to \AAg(K)$ which, by a slight abuse of notation, we also denote by  $t_g$.

On the other hand, the compactified Torelli morphism $\tgb$ induces a map $\MMgb(k)\to \AAgb(k)$ between
the $k$-valued points of, respectively, $\MMgb$ and $\AAgb$ (which we also denote by $\tgb$ by a slight abuse
of notation); moreover this map is continuous with respect to the Zariski topologies on $\MMgb(k)$ and on
$\AAgb(k)$.

After these preliminaries, we can state the first main result of this note, which can be seen as an answer to the above Question.

\begin{teoalpha}\label{T:mainthm1}
There is a commutative diagram of sets
\begin{equation}\label{E:fund-diag}
\xymatrix{
\MMgb(k)\ar[d]^{\tgb} &  \MMg(K) \ar[d]^{t_g} \ar[l]_{\rm red} \ar[r]^{\rm trop} & \Mgt \ar[d]^{t_g^{\rm tr}}\\
\AAgb(k)  & \AAg(K)  \ar[l]_{\rm red} \ar[r]^{\rm trop} & \Agt\\
}\end{equation}
where the vertical maps are continuous if we put the Zariski topology on the spaces on the left hand side, the non-archimedean topology on the spaces in the middle and 
the euclidean topology on the spaces on the right hand side.
\end{teoalpha}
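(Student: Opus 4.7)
The plan is to define the horizontal maps $\mathrm{red}$ and $\mathrm{trop}$ using semistable reduction and Raynaud-Mumford uniformization, verify the commutativity of the two squares separately, and note that the continuity of the three vertical maps follows from material already recalled in the introduction: $\tgb$ and $t_g$ are induced by morphisms of (proper) algebraic stacks, hence continuous for the Zariski and non-archimedean topologies respectively, while $\tgt$ is a morphism of stacky fans by \cite{BMV} (Subsection \ref{S:tropabvar}).

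The reduction maps will be defined by the valuative criterion applied to the proper stacks $\MMgb$ and $\AAgb$: given $X_K\in\MMg(K)$, the Deligne-Mumford stable reduction theorem yields, after a finite base change $R\subset R'$, a stable model $\wt{X}\to\Spec R'$ with special fibre $X_0$, and I would set $\mathrm{red}(X_K):=[X_0]\in\MMgb(k)$, with the analogous definition for $\mathrm{red}:\AAg(K)\to\AAgb(k)$ using the semistable reduction theorem for abelian varieties. The tropicalization $\mathrm{trop}(X_K)\in\Mgt$ will be the metrized dual graph $(\Gamma_{X_0},\ell)$ of $X_0$, with $\ell(e)$ the normalized valuation of a local smoothing parameter at the corresponding node; $\mathrm{trop}(A_K)\in\Agt$ will be the principally polarized tropical abelian variety extracted from the Raynaud-Mumford uniformization of $A_K$, encoding the toric rank together with the monodromy pairing on the character lattice of the toric part. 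Independence of the base change follows in both cases from separatedness of the ambient stacks combined with the uniqueness of the Raynaud-Mumford uniformization. The commutativity of the left square is then essentially formal: by Alexeev's theorem (Subsection \ref{S:comp-Tor}), $\tgb$ is a morphism of stacks extending $t_g$, so the unique $R'$-extension $\wt{X}$ of $X_K$ is sent by $\tgb$ to the unique $R'$-extension of $t_g(X_K)$, and passing to special fibres gives $\tgb(\mathrm{red}(X_K))=\mathrm{red}(t_g(X_K))$.

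I expect the main obstacle to lie in the commutativity of the right square, that is, in the identity $\tgt(\mathrm{trop}(X_K))=\mathrm{trop}(t_g(X_K))$. This is not formal: it is the arithmetic-geometric content of Raynaud's uniformization theory for Jacobians of curves with semistable reduction. Concretely, the identity component of the N\'eron model of $J(X_K)$ is a semi-abelian extension of the generalized Jacobian $\Jac(X_0^\nu)$ of the normalization by a torus $T$ whose character lattice is canonically $H_1(\Gamma_{X_0},\bbZ)$, and Grothendieck's monodromy pairing on this lattice agrees with the edge-length pairing used in \cite{BMV} to construct the tropical Jacobian of $(\Gamma_{X_0},\ell)$; combined with the standard compatibility between the Raynaud uniformization and the principal polarization coming from the theta divisor on $J(X_K)$, this will yield the required equality in $\Agt$.
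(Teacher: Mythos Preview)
Your proposal is correct and follows essentially the same approach as the paper. The paper splits the argument exactly as you do: the left square is handled formally via the valuative criterion and the fact that $\tgb$ extends $t_g$ as a morphism of stacks (Theorem~\ref{T:commuta1}), while the right square (Theorem~\ref{T:commuta2}) is the substantive step, proved by passing to a regular model, invoking Raynaud's identification of the relative Jacobian with the neutral component of the N\'eron model, and then applying the Picard--Lefschetz formula \cite[Expos\'e IX, Thm.~12.5]{SGA7I} to compute Grothendieck's monodromy pairing explicitly as the edge-length form on $H_1(\Gamma_{X_0},\bbR)$---which is precisely the identification you anticipate.
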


The spaces appearing on the left and on the right hand sides of the above diagram \eqref{E:fund-diag} admit natural
stratifications into locally closed subsets and our second main result concerns  the compatibility of these stratifications
with respect to the above reduction maps $\red$ and tropicalization maps $\trop$. Let us briefly review how these
stratifications are defined.

On one hand, to every stable weighted graph $(\Gamma,w)$ of genus $g$, we associate two locally closed subsets
\begin{equation*}
\begin{aligned}Ê
\MMgb(\Gamma,w)\subset \MMgb(k), \\
\Mgt(\Gamma,w)\subset \Mgt,
\end{aligned}
\end{equation*}
where $\MMgb(\Gamma,w)$ consists of all stable curves $X\in \MMgb(k)$ whose dual graph is $(\Gamma,w)$ and $\Mgt(\Gamma,w)$ consists of all tropical curves $C\in \Mgt$
whose underlying combinatorial type is $(\Gamma,w)$.
As observed already in \cite[Sec. 6.3]{BMV}, the above stratifications of $\MMgb(k)$ and of $\Mgt$ enjoy the following duality property with respect to the inclusions among the closures of strata:
\begin{equation}\label{E:dual1}
\MMgb(\Gamma,w)\subseteq \ov{\MMgb(\Gamma',w')}\quad \Longleftrightarrow \quad \ov{\Mgt(\Gamma,w)}\supseteq \Mgt(\Gamma',w').
\end{equation}
We refer the reader to Section \ref{S:modcurves} for more details.

On the other hand, to every equivalence class of Delaunay decompositions $[\Delta]$ of $\bbR^g$, we associate two locally closed subsets
\begin{equation*}
\begin{aligned}Ê
\AAgb([\Delta])\subset \AAgb(k), \\
\Agt([\Delta])\subset \Agt,
\end{aligned}
\end{equation*}
where $\AAgb([\Delta])$ consists of all p.p. stable semi-abelic pairs whose associated Delaunay decomposition is $[\Delta]$ and $\Agt([\Delta])$ consists of all tropical p.p. abelian varieties
whose associated Delaunay decomposition is $[\Delta]$. Also the stratifications of $\AAgb(k)$ and of $\Agt$ enjoy a similar duality property:
\begin{equation}\label{E:dual2}
\AAgb([\Delta])\subseteq \ov{\AAgb([\Delta'])}\quad \Longleftrightarrow \quad \ov{\Agt([\Delta])}\supseteq \Agt([\Delta']).
\end{equation}
We refer the reader to Section \ref{S:abvar} for more details.


\begin{teoalpha}\label{T:mainthm3}
\noindent
\begin{enumerate}[(i)]
\item \label{T:mainthm3i} For any stable weighted graph $(\Gamma,w)$ of genus $g$, it holds that
$$\red^{-1}(\MMgb(\Gamma,w))=\trop^{-1}(\Mgt(\Gamma,w)).$$
\item \label{T:mainthm3ii} For any equivalence class of Delaunay decompositions $[\Delta]$ of $\bbR^g$, it holds that
$$\red^{-1}(\AAgb([\Delta]))=\trop^{-1}(\Agt([\Delta])).$$
\end{enumerate}
\end{teoalpha}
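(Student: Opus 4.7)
The overall plan is to reduce both equalities to a single principle: each $K$-point appearing on the left of diagram \eqref{E:fund-diag} carries a discrete combinatorial invariant — a weighted dual graph in case (i), a Delaunay equivalence class in case (ii) — and this invariant is read off in the same way whether one computes $\red$ and then consults the stratification of $\MMgb(k)$ (respectively $\AAgb(k)$) or one computes $\trop$ and then consults the stratification of $\Mgt$ (respectively $\Agt$). Once this is established, each preimage in (i) and (ii) is characterized by the same discrete datum, so the two sides must coincide.

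For (i), let $X_K\in \MMg(K)$. By the stable reduction theorem, the smooth curve $X_K$ admits a unique stable $R$-model $\calX$, whose special fiber $X_k$ is by definition $\red(X_K)$. On the tropical side, as recalled in Subsection \ref{S:trop-curves}, $\trop(X_K)$ is the tropical curve whose underlying weighted graph is the dual graph of $X_k$ (vertices weighted by the geometric genera of the components of $X_k$) and whose edge lengths are the node thicknesses $\val(f_p)$, for $f_p$ a local equation of the node $p$. In particular, the combinatorial type of $\trop(X_K)$ is tautologically the dual weighted graph of $\red(X_K)$, so
\[
\red(X_K)\in \MMgb(\Gamma,w) \iff \text{the dual graph of } X_k \text{ is } (\Gamma,w) \iff \trop(X_K)\in \Mgt(\Gamma,w),
\]
which proves (i).

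For (ii), let $A_K\in \AAg(K)$. By the semistable reduction theorem together with the Raynaud uniformization, $A_K$ extends to a semiabelian $R$-scheme $\calG$ whose special fiber is an extension of an abelian variety by a torus $T$ with character lattice of some rank $r\leq g$; the principal polarization together with the uniformizing lattice then produces a positive semidefinite quadratic form $Q$ on $\bbR^g$ (of rank $r$, extending by zero outside the toric part). Alexeev's construction of $\AAgb$, reviewed in Subsection \ref{S:ppSSAP}, shows that the stratum of $\red(A_K)$ in $\AAgb(k)$ is exactly $\AAgb([\Delta(Q)])$, where $\Delta(Q)$ denotes the Delaunay decomposition of $\bbR^g$ determined by $Q$. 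By the definition of $\trop$ into $\Agt$ recalled in Subsection \ref{S:tropabvar}, the tropical p.p. abelian variety $\trop(A_K)$ is built from the same datum $Q$, so its Delaunay stratum is again $[\Delta(Q)]$. Both preimages therefore coincide with the set of $A_K\in \AAg(K)$ for which $[\Delta(Q_{A_K})]=[\Delta]$, proving (ii).

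The main obstacle lies in part (ii): one must carefully match the Delaunay decomposition that Alexeev uses to label the stratum of the reduction with the Delaunay decomposition that \cite{BMV} and \cite{Cha} use to label the stratum of the tropicalization, showing that both arise from the same Raynaud-monodromy form $Q$. Once this alignment of conventions is carried out, the argument reduces to unwinding definitions; part (i) is then the elementary curve-theoretic counterpart of the same principle.
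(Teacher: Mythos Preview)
Your proposal is correct and follows essentially the same route as the paper. For part (i), the paper likewise observes that by construction the combinatorial type of $\trop(X)$ equals the dual weighted graph of $\red(X)$; for part (ii), the paper reduces the statement to the identity $[\Del_{\trop(A,\xi)}]=[\Delta(\red(A,\xi))]$ and then, exactly as you anticipate in your final paragraph, invokes Alexeev's work (specifically \cite[Sec.~5.7]{Ale1}) to match the Delaunay decomposition labeling the stratum of the stable semi-abelic reduction with the one coming from the monodromy form. One small imprecision: stable (resp.\ semiabelian) reduction may require passing to a finite extension of $K$, not just $R$ itself, but since the paper's definitions of $\red$ and $\trop$ already absorb this and prove independence of the extension, your argument goes through verbatim once you read ``stable $R$-model'' as ``stable reduction after a suitable finite extension.''
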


\vspace{0,2cm}

We have observed in Theorem \ref{T:mainthm1} that the vertical maps appearing in diagram \eqref{E:fund-diag}
are continuous with respect to the topologies specified in loc. cit.
What about the continuity properties of the reduction maps $\red$ and of the tropicalization maps $\trop$?
For the reduction maps $\red$, we can prove the following


\begin{teoalpha}\label{T:mainthm2}
In the diagram \eqref{E:fund-diag}, the reduction maps $\red$ are anticontinuous (i.e. the inverse image of a closed subset is open).
\end{teoalpha}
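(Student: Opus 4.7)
The plan is to reduce the anticontinuity statement to a local computation on smooth charts. Since $\MMgb$ and $\AAgb$ are proper Deligne-Mumford stacks, the valuative criterion of properness ensures that every $K$-valued point of $\MMgb$ (resp.\ $\AAgb$) extends, uniquely up to isomorphism, to an $R$-valued point, whose special fiber yields a well-defined reduction map; its restriction to $\MMg(K)$ (resp.\ $\AAg(K)$) is the map $\red$ appearing in diagram \eqref{E:fund-diag}. Because the non-archimedean topology on these spaces of $K$-points is defined via smooth charts by schemes, and reduction commutes with smooth pullbacks, it is enough to prove the following purely algebro-geometric claim: for any affine scheme $X$ of finite type over $\Spec R$, the specialization map $X(R) \to X(k)$ is anticontinuous with respect to the non-archimedean topology on $X(R)$ and the Zariski topology on $X(k)$.

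To prove this claim, I would let $Z \subset X(k)$ be a Zariski closed subset and write it as the common vanishing locus of finitely many global sections $\bar f_1,\ldots,\bar f_n \in \Gamma(X_k,\calO_{X_k})$; each $\bar f_i$ can be lifted to some global section $f_i \in \Gamma(X,\calO_X)$. By the very definition of the reduction map, one checks directly that
$$\red^{-1}(Z) = \bigcap_{i=1}^{n} \{x \in X(R) : f_i(x) \in \m\}.$$
The key observation is that the maximal ideal $\m \subset R$ is open in the non-archimedean topology on $R$, while each evaluation $f_i \colon X(R) \to R$ is continuous (being polynomial in local coordinates). Hence each of the $n$ members of the intersection is open, and a finite intersection of opens is open, which proves the claim.

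The main technical obstacle lies in verifying that the scheme-theoretic specialization used in the argument above really coincides with the modular reduction map $\red$ defined in diagram \eqref{E:fund-diag}. On the curve side, this is immediate since $\red$ is defined precisely by taking the special fiber of the unique stable model over $R$, which matches the scheme-theoretic specialization in any smooth chart of $\MMgb$. On the abelian side, Alexeev's construction of $\red$ is formulated through Mumford's degeneration data for semi-abelian schemes, and one must check that, after choosing a smooth chart for $\AAgb$, this prescription agrees with the scheme-theoretic specialization. Both compatibilities follow from the modular interpretations recalled in Sections \ref{S:modcurves} and \ref{S:abvar}, after which the local argument above applies verbatim to both reduction maps.
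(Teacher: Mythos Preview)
Your local argument for affine schemes is correct and is essentially the paper's Lemma \ref{L:redAn} together with Case II of Theorem \ref{T:anticon-red}: lift equations from $k$ to $R$, use that $\m\subset R$ is open, and that polynomial evaluation is continuous. The reduction from stacks to affine charts via the quotient topology is also the paper's strategy (the general case of Theorem \ref{T:anticon-red}).

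There is, however, a genuine gap in your first paragraph. The valuative criterion of properness for a Deligne--Mumford (or more generally Artin) stack does \emph{not} guarantee that a $K$-point extends to an $R$-point; it only guarantees an extension after a finite field extension $K\subseteq K'$ with valuation ring $R'$. For instance, a smooth curve over $K$ need not have a stable model over $R$ itself --- this is precisely the content of the stable reduction theorem, and is why the paper defines $\red$ in Lemma--Definition \ref{D:red-curves} and \ref{D:red-abvar} via an auxiliary finite extension. Consequently your reduction ``it is enough to prove that $X(R)\to X(k)$ is anticontinuous for affine $X$'' does not by itself cover the map $\red:\MMg(K)\to\MMgb(k)$, because there is no map $\MMg(K)\to\MMgb(R)$ available to factor through.

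The paper patches this in Corollary \ref{C:red-proper}: one passes to the filtered union over all finite extensions $L/K$, forms $\bigcup_L \calX(R_L)\to\bigcup_L\calX(L)$, endows both sides with the finest topology making the inclusions from each level continuous, checks that this map is a homeomorphism by the valuative criterion, and then applies the affine anticontinuity (your claim) level by level to obtain anticontinuity of $\bigcup_L\calX(R_L)\to\calX(k)$. Composing with the inclusion $\calX(K)\hookrightarrow\bigcup_L\calX(L)$ gives the desired result. Your sketch is missing exactly this passage through finite extensions; once you add it, your argument becomes the paper's.
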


Indeed, we prove more generally that  for any stack $\calX$ proper over $\Spec R$
the natural reduction map $\calX(K)\to \calX(k)$ is anticontinuous (see Corollary \ref{C:red-proper}).

As far as the tropicalization maps $\trop$ are concerned, we make the following

\vspace{0,2cm}

\noindent
\textbf{Conjecture.}
\emph{In the diagram \eqref{E:fund-diag}, the tropicalization maps $\trop$ are continuous.}

\vspace{0,2cm}

Note that a positive answer to the above Conjecture, together with Theorem \ref{T:mainthm2},
could be regarded as a conceptual explanation of the duality \eqref{E:dual1} among the stratifications of
$\Mgt$ and of $\MMgb(k)$ as well as of the duality \eqref{E:dual2} among the stratifications of
$\Agt$ and of $\AAgb(k)$.

\vspace{0,2cm}

While this work was been completed, we heard from \cite{Rab} of a work in progress of M. Baker and J. Rabinoff, where 
they will prove the commutativity of the right square of diagram \eqref{E:fund-diag} in greater generality, namely 
working over an arbitrary non-archimedean (not necessarily discrete) valued field $K$ and replacing the topological spaces in
the middle with the (bigger) Berkovich analytifications of $\MMg$ and of $\AAg$.

Moreover, while this paper was under the refereeing process, the interesting preprint \cite{ACP} by D. Abramovich, L. Caporaso and S. Payne was posted on arXiv. 
In \cite[Thm. 1.2.1(1)]{ACP}, the authors prove that the compactification $\ov{\Mgt}$ of  $\Mgt$ constructed by L. Caporaso in \cite[\S 3.3]{Cap1} is isomorphic to the  skeleton of the 
Berkovich analytification $\MMgb^{\rm an}$ of $\MMgb$. 
Moreover, they show in \cite[Thm. 1.2.2(2)]{ACP} that the tropicalization map $\trop: \MMg(K)\to \Mgt$ of Theorem \ref{T:mainthm1} extends to a continuous, proper and surjective map 
${\rm Trop}:\MMgb^{\rm an}\to \ov{\Mgt}$.
As a corollary, one gets that the tropical map $\trop: \MMg(K)\to \Mgt$ is continuous, thus providing a positive partial answer to the above Conjecture.

\vspace{0,2cm}

We conclude this introduction with an outline of the paper and with the notations we are going to use throughout the paper.

\begin{nota}{\emph{Outline of the paper}}
\label{S:outline}

In Section \ref{S:modcurves}, we review the definition and main properties of the moduli stack $\MMgb$ of Deligne-Mumford stable curves (Subsection \ref{S:stablecurves}) and of the moduli
space $\Mgt$ of tropical curves  (Subsection \ref{S:trop-curves}).   Moreover, we define the reduction map curves in \ref{N:redmap1}, the tropicalization map for curves
in \ref{N:tropmap1}, and we prove Theorem \ref{T:mainthm3}\eqref{T:mainthm3i} at the end of the section.

In Section \ref{S:abvar}, we review the definition and main properties of the moduli space $\Agt$ of tropical p.p. abelian varieties (Subsection \ref{S:tropabvar}) and of the main component
$\AAgb$ of the moduli stack  of Alexeev  p.p.  stable semiabelic pairs (Subsection \ref{S:ppSSAP}).  Moreover, we define the tropicalization map for abelian varieties
in \ref{N:trop-Ag}, the reduction map for abelian varieties in \ref{N:redmap2} and we prove Theorem \ref{T:mainthm3}\eqref{T:mainthm3ii} at the end of the section.

Section \ref{S:Tormaps} is devoted to the two Torelli maps $\tgt$ and $\tgb$. In Subsection \ref{S:trop-Tor}, we review the definition of the tropical Torelli map $\tgt$ and we prove the second half
 of Theorem \ref{T:mainthm1}, i.e. that the tropicalization maps commute with the Torelli maps (see Theorem \ref{T:commuta2}). Moreover, we recall the description obtained
 in \cite{CV1} of the fibers of $\tgt$ in  \ref{S:fibers-tgt}.  In Subsection \ref{S:comp-Tor}, we review the definition of the compactified Torelli morphism $\tgb$ and we prove the first half
 of Theorem \ref{T:mainthm1}, i.e. that the reduction maps commute with the Torelli maps (see Theorem \ref{T:commuta1}). Moreover, we recall the description obtained
in \cite{CV2} of the fibers of $\tgb$ in  \ref{S:fibers-tgb}.   Section \ref{S:Tormaps} ends with a new description of the fibers of $\tgb$ on the locus of curves free from separating nodes
and not hyperelliptic in terms of their canonical morphisms (see Theorem \ref{T:Tor-canonical}).

Finally, in Section \ref{S:anti-redmap}, we prove Theorem \ref{T:mainthm2}, i.e. the anticontinuity of the reduction maps. Indeed, we show that the same result is true
for any stack proper over $\Spec R$ (see Corollary \ref{C:red-proper}).

\end{nota}

\begin{nota}{\emph{Notations}}
\label{S:notations}

\begin{itemize}

\item Throughout the paper, we fix an integer $g\geq 2$.

\item We fix a complete \footnote{Indeed, everything that we are going to say in this paper can be extended to a strictly Henselian discrete valuation ring $R$.
However, for simplicity, we assume that $R$ is complete.}  discrete valuation ring (DVR for short) $R$ with maximal ideal $\m$ and we assume that its residue field $k:=R/\m$ is algebraically
closed. Given an element $x\in R$, we denote by $\ov{x}\in k$ its reduction modulo the maximal ideal $\m$. Let $K$ be the quotient field of $R$.

\item We denote by $s$ the closed (or special) point of $\Spec R$ and by $\eta$ its generic point. In particular, the residue field of $s$ is equal to $k$ while the residue field of $\eta$ is equal to $K$.

\item Let $\val: K\to \bbZ\cup \{\infty\}$ be the valuation associated to the discrete valuation ring $R$.  The valuation $\val$ induces a non-archimedean norm $|.|$ on $K$
defined as
$$|x|:=e^{-\val(x)},$$
where $e$ is the Euler number (indeed for the purpose of what follows we can replace $e$ by any positive real number). The norm $|.|$ induces  a metric $d$ on $K$ defined by
$$d(x,y)=|x-y|.$$
The topology on $K$ induced by this metric $d$ is called the \emph{non-archimedean topology} on $K$. We endow $R\subset K$ with the subspace topology, which is
called the non-archimedean topology on $R$. Note that the maximal ideal $\m\subset R$ coincides with the open ball of radius $1$ centered at $0$:
$$\m=\{x\in R\: :\: |x| <1\}=\{x\in R\: : \: d(0,1)<1\}.$$
Similarly, the product topologies on $R^n$ and $K^n$ are called non-archimedean topologies.

\item Given any finite extension of fields $K\subseteq K'$, the valuation $\val$ on $K$ can be extended in a unique way to a valuation $\val'$ on $K'$ (using the fact that $K$ is complete
with respect to $\val$). The valuation ring of $\val'$, also called the valuation ring of $K'$ and denoted by  $R'$, is also equal to the integral closure of $R$ in the field $K'$.
Note that the valuation ring $R'$ is also a complete DVR. With a slight abuse of notation, we denote by $s$ the special point of $\Spec R'$ and by $\eta$ the generic point of
$\Spec R'$.

\item A map $f:X\to Y$ between topological spaces is said to be \emph{anticontinuous} if the inverse image of any closed
subset of $Y$ is open in $X$, or equivalently if the inverse image of any open subset of $Y$ is closed in $X$.

\end{itemize}
	
\end{nota}

\section{Moduli spaces of curves}\label{S:modcurves}

\subsection{The moduli stack $\MMgb$ of stable curves}\label{S:stablecurves}

The moduli stack $\MMg$ of connected smooth projective curves of genus $g\geq 2$ can be compactified by adding stable curves.

\begin{defi}\label{D:stablecurves}
A \emph{stable curve} $X$ of genus $g$ over a field $k$ is a connected projective nodal curve over  $k$ of arithmetical genus $g$ whose  canonical sheaf $\omega_X$ is ample.
\end{defi}

The following celebrated result is due to Deligne-Mumford \cite{DM}.

\begin{fact}[Deligne-Mumford]\label{F:DM-comp}
The  stack $\MMgb$ of stable curves of genus $g$ is proper and smooth over $\Spec \bbZ$. Moreover, $\MMgb$ is irreducible of dimension $3g-3$ and it contains $\MMg$ as a dense open substack.
\end{fact}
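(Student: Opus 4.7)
The plan is to verify the four assertions --- algebraic stack structure of $\MMgb$, smoothness of relative dimension $3g-3$ over $\Spec \bbZ$, properness, and irreducibility with $\MMg$ open dense --- in that order, relying on deformation theory of nodal curves, the stable reduction theorem, and specialization from characteristic zero.

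First I would set up $\MMgb$ as a Deligne--Mumford stack and establish smoothness and dimension via deformation theory. For a stable curve $X$ over an algebraically closed field, first-order deformations are classified by $\operatorname{Ext}^1(\Omega_X^1,\calO_X)$ and obstructions lie in $\operatorname{Ext}^2(\Omega_X^1,\calO_X)$. The local-to-global spectral sequence decomposes these into cohomology of the sheaf of infinitesimal automorphisms $\calT_X = \mathcal{H}om(\Omega_X^1,\calO_X)$ and of the skyscraper sheaves $\mathcal{E}xt^1(\Omega_X^1,\calO_X)$ supported at the nodes. Since $X$ is one-dimensional and each node is a hypersurface singularity, the obstruction group vanishes and smoothness follows; counting contributions from global deformations of the pointed normalization and from independent smoothings of each node yields the expected dimension $3g-3$.

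For properness I would apply the valuative criterion for stacks, which reduces to the following: given a smooth projective curve $C_\eta$ of genus $g$ over $K$, extend it uniquely, after a finite base change of $R$, to a stable curve over $\Spec R$. Existence is the stable reduction theorem, classically obtained by first producing a semistable model --- via resolution of singularities of any flat projective extension, or via the N\'eron model of $\Jac(C_\eta)$ --- and then contracting those smooth rational components that meet the rest of the curve in fewer than three nodes. Uniqueness follows from the ampleness of the dualizing sheaf on a stable curve, which forces the automorphism group scheme to be finite and makes the contraction to the stable model canonical.

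Irreducibility is the subtlest ingredient. Over $\bbC$ one has irreducibility of $\MMg$ from Teichm\"uller theory, and $\MMg$ is open dense in $\MMgb \otimes \bbC$ because each node of a stable curve can be smoothed independently inside its versal deformation. To reach arbitrary residue characteristic one reverses the expected logic and exploits the properness just established: $\MMgb \to \Spec \bbZ$ is proper, flat, and smooth of pure relative dimension $3g-3$, so every irreducible component of every geometric fiber has dimension $3g-3$ and must be the specialization of a component of the generic fiber. Since the generic fiber has a single component, so do all special fibers. The main obstacle is precisely this last step: no analytic input is available in characteristic $p$, and it is Deligne and Mumford's key insight --- using the properness of $\MMgb$ itself to propagate irreducibility across characteristics --- that does the heavy lifting.
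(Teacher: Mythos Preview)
The paper does not prove this statement: it is recorded as a Fact and attributed to Deligne--Mumford \cite{DM}, with no argument given. There is therefore nothing in the paper to compare your proposal against.

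That said, your sketch is a faithful outline of the original Deligne--Mumford strategy and is essentially correct. One small sharpening: in the irreducibility step you phrase the propagation as ``every irreducible component of a special fiber is the specialization of a component of the generic fiber.'' The cleaner way to run this is via Stein factorization: since $\MMgb \to \Spec \bbZ$ is proper and smooth, the number of geometric connected components of the fibers is locally constant on the base, and each geometric fiber, being smooth, has its connected components equal to its irreducible components. Thus irreducibility (equivalently, connectedness) of the geometric generic fiber --- supplied by the transcendental input over $\bbC$ --- forces the same for every geometric fiber. Your version is not wrong, but the Stein-factorization phrasing avoids any ambiguity about what ``specialization of a component'' means at the level of stacks.
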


\begin{nota}{\emph{The stratification of $\MMgb(k)$}}
\label{N:strataMMgb}

The set $\MMgb(k)$ of all stable curves of genus $g$ defined over $k$ endowed with its Zariski topology
admits a stratification into locally closed subspaces, parametrized by stable weighted graphs  of genus $g$, whose definition we recall below.

\begin{defi}\label{D:mark-graph}
A \emph{weighted graph} is a couple $(\Gamma,w)$ consisting of a finite connected
graph $\Gamma$ (possibly with loops or parallel edges) and a function $w:V(\Gamma)\to \bbN$, called
the weight function.
A weighted graph  is called \emph{stable} if any vertex $v$ of weight zero
(i.e. such that $w(v)=0$) has valence $\val(v)\geq3$.
The total weight of a weighted graph  $(\Gamma,w)$ is
$$|w|:=\sum_{v \in V(\Gamma)} w(v),$$
and the genus of $(\Gamma,w)$ is equal to
$$g(\Gamma,w):=g(\Gamma)+|w|.$$
\end{defi}

Given a weighted graph $(\Gamma, w)$, the automorphism group $\Aut(\Gamma, w)$ of  $(\Gamma,w)$ consists of all the pairs 
$(\sigma, \psi)$ where $\sigma$ is a permutation of the vertices $V(\Gamma)$ of $\Gamma$ and $\psi$ is a permutation 
of the edges $E(\Gamma)$ of $\Gamma$ such that:
\begin{itemize}
\item $w(\sigma(v))=w(v)$ for any $v\in V(\Gamma)$;
\item if an edge $e\in E(\Gamma)$ is incident to a vertex $v\in V(\Gamma)$ then $\psi(e)$ is incident to $\sigma(v)$. 
\end{itemize}


To every stable curve $X$ of genus $g$ it is naturally associated a stable weighted graph  of genus $g$, called its dual weighted graph,
which captures the combinatorics of the stable curve.

\begin{defi}\label{D:dualgraph}
The \emph{dual weighted graph} of a stable curve $X$ of genus $g$ is the weighted graph $(\Gamma_X,w_X)$ defined as it follows.

The vertices $V(\Gamma_X)$ of the graph $\Gamma_X$ are in bijection with the irreducible components of $X$ while the edges $E(\Gamma_X)$ of $X$
are in bijection with the nodes of $X$. An edge $e\in E(\Gamma_X)$ corresponding to a node $n_e$ of $X$ links  the (possibly equal) vertices $v_1$ and $v_2$
corresponding to the (possibly equal) irreducible components $C_{v_1}$ and $C_{v_2}$ which contain the node $n_e$.

 The weight function $w_X:V(\Gamma_X)\to \bbN$
assigns to every vertex $v$ of $\Gamma_X$ the geometric genus of the irreducible component $C_v$ corresponding to the vertex $v$.
\end{defi}

It is easy to check that the dual weighted graph $(\Gamma_X,w_X)$ of a stable curve $X$ of genus $g$ is stable and of genus $g$.

\vspace{0,2cm}

To every stable weighted graph $(\Gamma, w)$ of genus $g$, we associate the following subset of $\MMgb(k)$:
\begin{equation}\label{E:strataMMgb}
\MMgb(\Gamma,w):=\{X\in \MMgb(k) \: : \: (\Gamma_X,w_X)=(\Gamma,w)\}.
\end{equation}
As $(\Gamma,w)$ varies among all stable weighted graphs of genus $g$, we get a stratification of $\MMgb(k)$ into disjoint locally closed subsets.
In order to describe the inclusion relations between the closures of these strata, we introduce the following order relation among all stable weighted graphs of genus $g$.

\begin{defi}\label{D:spec-graphs}
Given two weighted graphs $(\Gamma, w)$ and $(\Gamma', w')$, we say that $(\Gamma,w)$ \emph{dominates} $(\Gamma',w')$, and we write $(\Gamma,w)\geq (\Gamma',w')$, if
$\Gamma'$ is obtained from $\Gamma$ by contracting some of its edges and the weight function $w'$ is obtained from the weight function $w$ by an iteration of the following rule:
\begin{itemize}
\item If $\Gamma'$ is obtained from $\Gamma$ by contracting an edge $e$ that joins two distinct vertices $v_1$ and $v_2$, then the vertex $\wt{v}$ of $\Gamma'$ which is the image
of the two vertices $v_1$ and $v_2$ has weight $w'(\wt{v})=w(v_1)+w(v_2)$.
\item If $\Gamma'$ is obtained from $\Gamma$ by contracting a loop $e$ around the vertex $v$, then the vertex $\wt{v}$ of $\Gamma'$ which is the image
of $v$ has weight $w'(\wt{v})=w(v)+1$.
\end{itemize}
\end{defi}
It is easy to see that if $(\Gamma,w)\geq (\Gamma',w')$ then $g(\Gamma,w)=g(\Gamma',w')$ and moreover, if $(\Gamma,w)$ is stable, then $(\Gamma',w')$ is stable.

We can now describe the inclusion relation among the closures of the strata of $\MMgb$.

\begin{fact}\label{F:strataMgb}
The space $\MMgb(k)$ admits a stratification into disjoint locally closed subsets
$$\MMgb(k)=\coprod_{(\Gamma,w)} \MMgb(\Gamma,w),$$
as $(\Gamma,w)$ varies among all stable weighted graphs of genus $g$.

Given two stable weighted graphs $(\Gamma,w)$ and $(\Gamma',w')$ of genus $g$, we have that
$$\MMgb(\Gamma,w)\subseteq \ov{\MMgb(\Gamma',w')}\Leftrightarrow (\Gamma,w)\geq (\Gamma',w').$$
\end{fact}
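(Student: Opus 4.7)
My plan is to deduce both assertions from the local structure of $\MMgb$ near a stable curve, which is controlled by its \emph{versal deformation}. Fix $X\in\MMgb(k)$ with dual weighted graph $(\Gamma_X,w_X)$ and nodes $p_1,\ldots,p_n$ in bijection with the edges $e_1,\ldots,e_n\in E(\Gamma_X)$. The key input from the deformation theory of nodal curves is that the versal deformation ring of $X$ is a regular local ring of dimension $3g-3$ admitting a system of parameters $t_1,\ldots,t_n,s_1,\ldots,s_{3g-3-n}$ in which $t_i$ is \'etale-locally the smoothing equation $xy-t_i$ of the node $p_i$, and the $s_j$ parametrise locally trivial (equisingular) deformations. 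Consequently, for each subset $T\subseteq\{1,\ldots,n\}$, the locus of the versal deformation on which $t_i\neq 0$ for $i\in T$ and $t_i=0$ for $i\notin T$ parametrises precisely those stable curves whose dual graph is obtained from $(\Gamma_X,w_X)$ by contracting exactly the edges $\{e_i : i\in T\}$ (with the weight update prescribed by Definition \ref{D:spec-graphs}).

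For the stratification assertion, I would first note that a stable weighted graph of genus $g$ has at most $2g-2$ vertices and $3g-3$ edges, so there are only finitely many isomorphism classes, and the decomposition of $\MMgb(k)$ by dual graph is a finite disjoint union. Each $\MMgb(\Gamma,w)$ is locally closed because near any point $X\in\MMgb(\Gamma,w)$ it is cut out inside the versal deformation by the closed condition $V(t_1,\ldots,t_n)$, intersected with the open locus on which the number of nodes does not increase; the latter is the entire versal deformation since $n$ is the maximum value attained there.

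For the closure relation, both implications follow from the stratification of the versal deformation. If $(\Gamma,w)\geq (\Gamma',w')$, pick $X\in\MMgb(\Gamma,w)$ and choose a subset $T\subseteq\{1,\ldots,n\}$ such that contracting the edges $\{e_i : i\in T\}$ yields $(\Gamma',w')$; the corresponding stratum of the versal deformation of $X$ lies entirely in $\MMgb(\Gamma',w')$ and has $X$ in its closure, so $X\in\ov{\MMgb(\Gamma',w')}$. Conversely, if $X\in\MMgb(\Gamma,w)\cap\ov{\MMgb(\Gamma',w')}$, then the versal deformation of $X$ must meet $\MMgb(\Gamma',w')$ in a non-empty stratum of the form displayed above, whose general member has dual graph obtained from $(\Gamma,w)$ by contracting a suitable subset of edges; hence $(\Gamma,w)\geq (\Gamma',w')$.

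The main obstacle I foresee is the rigorous justification of the local claim that the parameters $t_1,\ldots,t_n$ can be chosen independently so that vanishing or non-vanishing of $t_i$ precisely controls whether the node $p_i$ is preserved or smoothed. This is the standard statement that $\MMgb$ is a normal crossing compactification of $\MMg$ whose boundary divisors are indexed by edges of the dual graph; it combines the formal local structure $\widehat{\calO}_{X,p_i}\cong k[[x,y]]/(xy)$ at each node with a Schlessinger-type decomposition of the deformation functor of $X$ as a smooth fibre product over a suitable base of the $n$ local smoothing functors at the nodes with the global equisingular deformation functor.
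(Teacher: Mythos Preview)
Your proposal is correct. The paper does not actually prove this statement: its entire argument is the sentence ``This is well-known, see e.g.\ \cite[Chap.~XII, Sec.~10]{GAC2} or \cite[\S 4.2]{Cap1}.'' What you have written is precisely the standard deformation-theoretic argument that those references contain: one uses that the versal deformation of a stable curve $X$ with $n$ nodes is smooth of dimension $3g-3$, with $n$ of the parameters $t_1,\dots,t_n$ controlling the smoothing of the individual nodes, so that the boundary of $\MMgb$ is \'etale-locally a normal crossings divisor and the stratification by dual graph is \'etale-locally the coordinate-subspace stratification.

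Two small remarks. First, your phrasing ``intersected with the open locus on which the number of nodes does not increase; the latter is the entire versal deformation'' is correct but slightly roundabout: the cleaner statement is simply that in the versal deformation of $X$, the locus $V(t_1,\dots,t_n)$ is exactly the equisingular locus, and this already shows $\MMgb(\Gamma,w)$ is closed in an \'etale neighbourhood of $X$, hence locally closed. Second, in passing from the versal deformation to the moduli space you are implicitly using that the map from the versal base to $\MMgb$ is \'etale (in the stacky sense), so that closures are preserved; this is standard but worth naming, since $\Aut(X)$ may permute the nodes and hence the $t_i$. Neither point affects the validity of your argument.
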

\begin{proof}
This is well-known, see e.g. \cite[Chap. XII, Sec. 10]{GAC2} or \cite[\S 4.2]{Cap1}.
\end{proof}

\begin{remark}
Indeed, the stratification of the topological space $\MMgb(k)$ described in Fact \ref{F:strataMgb} is  induced by a stratification of
the stack $\MMgb$ into locally closed substacks. We refer to \cite[Chap. XII]{GAC2} for more details.
\end{remark}

\end{nota}

\begin{nota}{\emph{The reduction map $\red: \MMg(K)\to \MMgb(k)$}}
\label{N:redmap1}

We are now ready to define the reduction map $\red: \MMg(K)\to \MMgb(k)$ appearing in the diagram \eqref{E:fund-diag}.
Since the stack $\MMgb$ is proper, the valuative criterion of properness for stacks gives that for any map $f:\Spec K\to \MMg\subseteq \MMgb$ there exists
a finite extension $K'$ of $K$ with valuation ring $R'$ and a unique map $\phi:\Spec R'\to \MMgb$ such that the following diagram is commutative
$$
\xymatrix{
\Spec R'  \ar[rrrd]^{\phi}& & & \\
\Spec K' \ar[r] \ar[u] & \Spec K \ar[r]^f & \MMg \ar@{^{(}->}[r]& \MMgb.
}$$
In other words, given a connected smooth projective curve $X\in \MMg(K)$, up to a finite extension $K\subseteq K'$ with valuation ring $R'$,
there exists a unique family of stable curves $\calX'\to \Spec R'$, called the \emph{stable reduction} of $X$ with respect to the extension $K\subseteq K'$, such that its generic fiber
$\calX'_{\eta'}:=\calX\times_{\Spec R'} \Spec K'$ is isomorphic to $X_{K'}:=X\times_K K'$. Note that the residue field of $R'$ is equal to $k$, since $k$ was assumed to be algebraically closed.

\begin{lemmadefi}\label{D:red-curves}
The reduction map
$$\red: \MMg(K)\to \MMgb(k)$$
is defined by sending $X\in \MMg(K)$ to the central fiber $\calX'_s\in \MMgb(k)$ of a stable reduction $\calX' \to \Spec R'$ of $X$
with respect to some finite field extension $K\subseteq K'$ with valuation ring $R'$.
The isomorphism class of $\calX'_s\in \MMgb(k)$ does not depend on the chosen field extension $K\subset K'$ and is denoted by $\red(X)$.
\end{lemmadefi}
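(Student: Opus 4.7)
The plan is to verify well-definedness by exploiting the uniqueness clause in the valuative criterion of properness together with a compositum argument. Existence of the central fiber is already guaranteed by the discussion immediately preceding the statement, so the only content of the lemma is the independence of the isomorphism class from the chosen extension.

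First I would fix two finite extensions $K \subseteq K_1'$ and $K \subseteq K_2'$ with valuation rings $R_1'$ and $R_2'$, together with their associated stable reductions $\phi_i: \Spec R_i' \to \MMgb$ extending the map $\Spec K \to \MMg$ determined by $X$. Embedding $K_1'$ and $K_2'$ in a common algebraic closure $\ov{K}$ and forming the compositum inside $\ov{K}$, I obtain a finite extension $K''$ of $K$ containing both $K_1'$ and $K_2'$. Since $R$ is complete, the integral closure $R''$ of $R$ in $K''$ is again a complete DVR (as recalled in the Notations), and because $k$ is algebraically closed the residue field of $R''$ is still $k$.

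Next I would apply the valuative criterion of properness to $\MMgb$ and the map $\Spec K'' \to \MMg$ obtained from $X$ by base change, producing a map $\phi'':\Spec R''\to \MMgb$. By the uniqueness part of the valuative criterion, $\phi''$ must coincide (up to canonical $2$-isomorphism) with the base change $\phi_i \times_{\Spec R_i'} \Spec R''$ for both $i=1,2$. Restricting to the special point and using that each of the residue field extensions involved is the trivial extension $k\hookrightarrow k$, one obtains canonical isomorphisms
\[
\phi_1(s) \cong \phi''(s) \cong \phi_2(s)
\]
in $\MMgb(k)$, which is exactly the required independence.

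The main subtle point is keeping track of the stacky nature of $\MMgb$: the valuative criterion supplies uniqueness only up to a unique $2$-isomorphism, so one must verify that the chain of identifications above descends to a well-defined isomorphism class of central fibers, rather than a mere equality of points. This is routine but requires care; in particular, the fact that the residue fields of $R$, $R_i'$ and $R''$ all equal $k$—which rests on the algebraic closedness of $k$ and the completeness of $R$ ensuring a unique extension of the valuation—is essential, as it prevents any Galois or residue-extension twist from entering when specialising the base-changed maps to the closed point.
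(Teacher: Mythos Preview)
Your proposal is correct and follows essentially the same route as the paper's proof: both arguments pass to a common finite extension (the compositum inside an algebraic closure), invoke the uniqueness of the stable reduction/valuative criterion to identify the two base-changed families, and then use that $k$ is algebraically closed to conclude that the special fibers over the larger ring coincide with the original ones. The only difference is cosmetic---you phrase everything via maps $\Spec R_i'\to\MMgb$ and explicitly flag the $2$-isomorphism bookkeeping, whereas the paper speaks of families of stable curves---but the underlying mathematics is identical.
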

\begin{proof}
Let $K'$ and $K''$ two finite field extensions of $K$, with valuation rings respectively $R'$ and $R''$, such that $X$ admits a stable reduction $\calX'\to \Spec R'$ with respect to $K'$ and
a stable reduction $\calX''\to \Spec R''$ with respect to $K''$.
Choose an algebraic closure $\ov{K}$ of $K$ that contains $K'$ and $K''$ and consider, inside $\ov{K}$, the smallest field extension $K\subseteq L$ that contains $K'$ and $K''$.
Clearly, $L$ is a finite field extension of $K$ and we denote by $S$ its valuation ring. The base change of each of the two families $\calX'\to \Spec R'$ and $\calX''\to \Spec R''$ to
$\Spec S$ is clearly a stable reduction with respect to the extension $K\subseteq L$.  By the uniqueness of the stable reduction, these two pull-backs must be isomorphic and in  particular
their central fibers must be isomorphic. However, since $k$ is assumed to be algebraically closed, the central fibers of these two pull-backs are equal to $\calX'_s$ and $\calX''_s$; hence
we must have that $\calX'_s\cong \calX''_s$, q.e.d.
\end{proof}

\end{nota}

\subsection{The moduli space $\Mgt$ of tropical curves}\label{S:trop-curves}

Recall the definition of tropical curves introduced in \cite{BMV}, generalizing slightly the original definition of Mikhalkin-Zharkov in \cite{MZ}.

\begin{defi}\label{D:trop-curves}
A \emph{tropical curve} $C$ is the datum of a triple $(\Gamma,w,l)$
consisting of a stable weighted graph  $(\Gamma,w)$, called the
combinatorial type of $C$, and a function $l:E(\Gamma)\to \bbR_{>0}$,
called the length function.
The genus $g(C)$ of $C$ is the genus of its combinatorial type.
\end{defi}

Given a stable weighted graph  $(\Gamma,w)$ of genus $g$, we define $\Mgt(\Gamma,w)$ to be the set of tropical curves of combinatorial type equal to $(\Gamma,w)$.
Note that a tropical curve $C\in \Mgt(\Gamma,w)$ is determined by a length function $l:E(\Gamma)\to \bbR_{>0}$. However, different length functions can give rise to the same
tropical curve if they differ by an automorphism of the weighted graph  $(\Gamma,w)$. 
Therefore, we have a natural identification
\begin{equation}\label{E:strataMgt}
\Mgt(\Gamma,w)=\bbR_{>0}^{E(\Gamma)}/\Aut(\Gamma,w).
\end{equation}
These spaces can indeed by glued together along their boundaries in order to obtain a topological space $\Mgt$, called the moduli space of tropical curves of genus $g$,
whose points are in bijection with tropical curves of genus $g$ (see \cite{BMV}). In loc. cit., the space $\Mgt$ is endowed with the structure of a stacky fan. Here, for simplicity, we treat it simply
as a topological space. We summarize all the known properties of $\Mgt$ in the following

\begin{fact}
\label{F:Mgt}
\noindent
\begin{enumerate}[(i)]
\item \label{F:Mgt1} There exists a topological space $\Mgt$ whose points are in natural bijection with tropical curves of genus $g$. Moreover, the topological space $\Mgt$ is 
normal (hence Hausdorff), locally compact, paracompact, locally contractible, metrizable and second countable.
\item \label{F:Mgt2}  The topological space $\Mgt$ admits a stratification into disjoint locally closed subsets
$$\Mgt=\coprod_{(\Gamma,w)} \Mgt(\Gamma,w),$$
as $(\Gamma,w)$ varies among all stable weighted graphs of genus $g$.
\item  \label{F:Mgt3} Given two stable weighted graphs $(\Gamma,w)$ and $(\Gamma',w')$ of genus $g$, we have that
$$\ov{\Mgt(\Gamma,w)}\supseteq \Mgt(\Gamma',w')\Leftrightarrow (\Gamma,w)\geq (\Gamma',w').$$
\end{enumerate}
\end{fact}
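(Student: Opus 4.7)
The plan is to prove the three parts of Fact \ref{F:Mgt} by first constructing $\Mgt$ as a topological quotient, then reading off the stratification from the construction, and finally matching closure relations with edge-contractions. Since this statement is essentially a summary of results from \cite{BMV}, \cite{Cap1}, and \cite{Cha}, the proof would mostly amount to assembling these references carefully; below I sketch what I would actually have to check.

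For (i), I would build $\Mgt$ as follows. For each stable weighted graph $(\Gamma,w)$ of genus $g$, consider the closed orthant $\bbR_{\geq 0}^{E(\Gamma)}$ with the $\Aut(\Gamma,w)$-action permuting coordinates, and form the quotient $\overline{\sigma}_{(\Gamma,w)}:=\bbR_{\geq 0}^{E(\Gamma)}/\Aut(\Gamma,w)$. Whenever $(\Gamma,w)\geq (\Gamma',w')$ via contraction of a subset $S\subseteq E(\Gamma)$, the face $\{l\,:\, l(e)=0 \text{ for all } e\in S\}$ of $\overline{\sigma}_{(\Gamma,w)}$ is naturally identified with $\overline{\sigma}_{(\Gamma',w')}$ by the weight-adjustment rule of Definition \ref{D:spec-graphs}. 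I would then define $\Mgt$ as the colimit (in topological spaces) of the diagram whose objects are the $\overline{\sigma}_{(\Gamma,w)}$ and whose morphisms are these face identifications. By construction, the points of $\Mgt$ are in bijection with tropical curves of genus $g$ via \eqref{E:strataMgt}. The topological properties are then extracted from the literature: second countability and local compactness are immediate from the fact that only finitely many stable weighted graphs of genus $g$ exist (up to isomorphism); the Hausdorff property is the content of \cite[Thm.~3.4]{Cap1}; once Hausdorff, one can produce an explicit metric (e.g.\ the one induced by the $\ell^\infty$-norm on the cones, passed to the quotient) to get metrizability, and then normality, paracompactness, and local contractibility follow formally.

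For (ii), the stratification is read off immediately from the construction: the image in $\Mgt$ of the relative interior $\bbR_{>0}^{E(\Gamma)}/\Aut(\Gamma,w)$ of $\overline{\sigma}_{(\Gamma,w)}$ is precisely $\Mgt(\Gamma,w)$, and different combinatorial types give disjoint strata by construction. Each stratum is locally closed because it is the image of a locally closed subset of the disjoint union of the cones and the quotient map is closed on each cone (which is a consequence of the $\Aut(\Gamma,w)$-action being by a finite group).

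For (iii), the equivalence of $\Mgt(\Gamma',w')\subseteq \ov{\Mgt(\Gamma,w)}$ with $(\Gamma,w)\geq (\Gamma',w')$ would be proved in two directions. For $(\Leftarrow)$, if $(\Gamma,w)\geq (\Gamma',w')$ with contracted edge set $S$, any tropical curve $C'$ of type $(\Gamma',w')$ with lengths $l'$ is the limit in $\Mgt$ of tropical curves $C_t$ of type $(\Gamma,w)$ whose length on $e\in S$ equals $t\to 0^+$ and whose other lengths pull back $l'$; this limit exists in $\Mgt$ by the gluing identifications above. For $(\Rightarrow)$, I would pick a sequence $C_n\in \Mgt(\Gamma,w)$ converging to $C'\in \Mgt(\Gamma',w')$ and lift it (after extracting subsequences and applying automorphisms in the finite group $\Aut(\Gamma,w)$) to a convergent sequence in $\bbR_{\geq 0}^{E(\Gamma)}$. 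Its limit $l_\infty$ lies in some face of $\bbR_{\geq 0}^{E(\Gamma)}$, cut out by setting a subset $S$ of edges to zero; the image of $l_\infty$ in $\Mgt$ is $C'$, which forces the combinatorial type obtained by contracting $S$ in $(\Gamma,w)$ (with weights adjusted as in Definition \ref{D:spec-graphs}) to be $(\Gamma',w')$, giving $(\Gamma,w)\geq (\Gamma',w')$.

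The main obstacle will be (i), specifically verifying the Hausdorff property, which is genuinely subtle because of the finite automorphism groups of graphs and the resulting risk that distinct curves fail to be separated after quotienting; this is exactly the reason the reference \cite{Cha} is needed rather than a soft argument. Once Hausdorffness is in hand, the remaining topological properties and parts (ii)--(iii) are straightforward from the cone-complex structure.
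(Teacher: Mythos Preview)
Your proposal is sound in outline and matches the paper's approach, which simply defers to the literature: the paper cites \cite{BMV} for the construction and for parts \eqref{F:Mgt2}--\eqref{F:Mgt3}, and \cite[\S 2]{CMV} for the topological properties in \eqref{F:Mgt1}. Your sketch of the colimit-of-cones construction and of the closure-relation argument in \eqref{F:Mgt3} is exactly what underlies those references.

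Two small inaccuracies in your treatment of \eqref{F:Mgt1} are worth flagging. First, there is a slip in the attribution at the end: the Hausdorff property of $\Mgt$ is established in \cite{Cap1} (as you correctly say earlier), whereas \cite{Cha} treats the analogous statement for $\Agt$; so \cite{Cha} is not the relevant reference here. Second, your claim that ``normality, paracompactness, and local contractibility follow formally'' from metrizability is not right for local contractibility: metrizable spaces need not be locally contractible (e.g.\ the Cantor set). Local contractibility of $\Mgt$ comes instead from the cone-complex structure itself (each point has a conical neighborhood), and this is what \cite{CMV} actually proves. Similarly, passing the $\ell^\infty$-metric to the colimit is not automatic and requires the kind of argument carried out in \cite{CMV}; Hausdorffness alone does not produce a metric. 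These are minor repairs to an otherwise correct plan.
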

\begin{proof}
The topological space $\Mgt$ has been constructed in \cite{BMV} and further studied in \cite{Cap1}. Properties \eqref{F:Mgt2} and \eqref{F:Mgt3} follows from \cite[\S 3]{BMV}. 
The topological properties of $\Mgt$ stated in \eqref{F:Mgt1}Ê are proved in \cite[\S 2]{CMV}.
\end{proof}

\begin{nota}{\emph{The tropicalization map $\trop: \MMgb(K)\to \Mgt$}}
\label{N:tropmap1}

We are now ready to define the tropicalization map $\trop:\MMg(K)\to \Mgt$ appearing in the diagram \eqref{E:fund-diag}.
Given a connected projective smooth curve $X$ over $K$, consider a finite field extension
$K\subseteq K'$ with valuation ring $R'$ such that  the base change $X_{K'}$ of $X$ to $K'$ admits a stable reduction $\calX'\to \Spec R'$, in the sense of \ref{N:redmap1}.
Consider now  a node $n$ of the central fiber $\calX'_{s}$ of $\calX' \to  \Spec R'$. Since the generic fiber of $\calX'$ is smooth, by the deformation theory of nodal singularities,
it follows easily that a local equation of the surface $\calX'$ at $n$ can be chosen to be $xy=(t')^{w_n}$,  where $t'$ is some fixed uniformizer of $R'$ (i.e. a generator of the maximal ideal $\m'$ of
$R'$) and $w_n\in \bbZ_{>0}$ is some uniquely determined natural number, which we call the \emph{width of the node} $n$.

\begin{lemmadefi}\label{D:tropmap-curves}
The tropicalization map
$$\trop: \MMg(K)\to \Mgt$$
is defined by sending $X\in \MMg(K)$ into the tropical curve $C'\in \Mgt$ such that:
\begin{itemize}
\item the combinatorial type of $C'$ is given by the dual weighted graph $(\Gamma_{\calX'_{s}},w_{\calX'_{s}})$ of the special fiber
$\calX'_{s}$ of the stable reduction of $X$ with respect to some finite field extension $K\subseteq K'$;
\item the length of an edge $e\in E(\Gamma_{\calX'_{s}})$ is equal to $l'(e):=\displaystyle \frac{w_{n_e}}{[K':K]}$, where $w_{n_e}$ is the width of the node $n_e$ of $\calX'_{s}$ corresponding
to the edge $e$ and $[K':K]$ is the degree of the finite field extension $K\subseteq K'$.
\end{itemize}
The so defined tropical curve $C'\in \Mgt$  does not depend on the chosen  field extension $K\subseteq K'$ and is denoted by $\trop(X)$.
\end{lemmadefi}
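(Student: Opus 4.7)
The plan is to mimic the well-definedness argument of Lemma-Definition \ref{D:red-curves}, but with the extra bookkeeping of how the widths of the nodes behave under base change. Given two finite extensions $K\subseteq K'$ and $K\subseteq K''$ (with valuation rings $R'$ and $R''$) along which $X$ admits stable reductions $\calX'\to \Spec R'$ and $\calX''\to \Spec R''$, I would embed $K'$ and $K''$ into a common finite extension $K\subseteq L$ inside a fixed algebraic closure $\ov{K}$, and denote by $S$ the valuation ring of $L$. The base changes of $\calX'$ and $\calX''$ to $\Spec S$ are both stable reductions of $X_L$, so by uniqueness of stable reduction they are isomorphic; since $k$ is algebraically closed the residue field of $S$ is $k$, and the two central fibers are canonically identified with stable curves over $k$. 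In particular their dual weighted graphs agree, which takes care of the combinatorial-type part of the statement.

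The substantive point is to show that the numbers $w_{n_e}/[K':K]$ are unchanged when one replaces $R'$ by $S$. Let $t'$ be a uniformizer of $R'$ and $t_L$ a uniformizer of $S$. Because $k$ is algebraically closed, the residue field extension in $L/K'$ is trivial, so $[L:K']$ equals the ramification index $e:=e(L/K')$, and consequently $t'=u\cdot t_L^{e}$ for some unit $u\in S^{*}$. If a local equation for $\calX'$ at a node $n$ is $xy=(t')^{w_n}$, then after base change to $\Spec S$ and absorbing the unit $u^{w_n}$ into the coordinates, we obtain the local equation $xy=t_L^{w_n\cdot e}$. Hence the width of the corresponding node in $(\calX'\times_{R'}S)_s$ is $w_n\cdot e$, and the prescribed length becomes
\begin{equation*}
\frac{w_n\cdot e}{[L:K]}=\frac{w_n\cdot [L:K']}{[L:K']\cdot [K':K]}=\frac{w_n}{[K':K]}.
\end{equation*}
By symmetry the same computation applies to $R''$, so both recipes produce the same length function, and therefore the same tropical curve in $\Mgt(\Gamma,w)$ via the identification \eqref{E:strataMgt}.

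The one place that requires care—and which I regard as the main technical point—is the claim that at a node of $\calX'_s$ one can choose étale-local coordinates in which the total space looks like $xy=(t')^{w_n}$ for a well-defined positive integer $w_n$. This is the standard local description of a family of nodal curves whose generic fiber is smooth: it follows from the deformation theory of ordinary double points together with Artin approximation, and guarantees both existence of such an equation and the invariance of $w_n$ under unit rescaling of $t'$, which is what the computation above needs. Once this standard fact is invoked, the rest of the verification is the elementary manipulation of ramification indices displayed above.
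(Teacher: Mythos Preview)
Your proof is correct and follows essentially the same approach as the paper: embed $K'$ and $K''$ into a common finite extension $L$, identify the central fibers (hence the combinatorial types) via uniqueness of stable reduction, and then check that the width of a node scales by the ramification index $[L:K']$ under base change so that the normalized lengths $w_{n}/[K':K]$ agree. The paper performs the same computation with the slightly looser phrasing $t'=z^{[L:K']}$; you are more careful in writing $t'=u\cdot t_L^{e}$ and absorbing the unit, and you make explicit the deformation-theoretic justification for the well-definedness of $w_n$, which the paper asserts without proof just before the statement.
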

\begin{proof}
Let $K'$ and $K''$ two finite field extensions of $K$, with valuation rings respectively $R'$ and $R''$, such that $X$ admits a stable reduction $\calX'\to \Spec R'$ with respect to $K'$ and
a stable reduction $\calX''\to \Spec R''$ with respect to $K''$. Denote by $C'$ and $C''$ the tropical curves associated to the stable reductions $\calX'$ and $\calX''$ according to the
above described procedure.

As in the proof of Lemma-Definition \ref{D:red-curves}, we can find a finite field extension $K\subseteq L$, with valuation ring $S$,  that contains $K'$ and $K''$ as subfields.
In the proof of loc. cit., we have shown that the special fibers $\calX'_{s}$ and $\calX''_{s}$ of the two stable reductions $\calX'$ and $\calX''$ are isomorphic.
This implies that the combinatorial types of $C'$ and of $C''$ are the same. It remains to show that the length function $l'$ on $C'$ coincides with the length function $l''$ on $C''$.
Consider now an edge $e\in E(\Gamma_{\calX'_{s}})=E(\Gamma_{\calX''_{s}})$ and its corresponding
node $n:=n_e\in \calX'_{s}=\calX''_{s}$. If we choose a uniformizer $z$ for $S$, then $t':=z^{[L:K']}$ is a uniformizer for $R'$ and $t'':=z^{[L:K'']}$ is a uniformizer for $R''$.
Therefore, if the local equation of $\calX'$ (resp. $\calX''$) at $n$ is given by $xy=(t')^{w'_{n}}$ (resp. $xy=(t'')^{w''_n}$) then the local equation of the surface
$\calX'\times_{\Spec R'} \Spec S$ (resp. $\calX''\times_{\Spec R''} \Spec S$) at $n$ is given by $xy=z^{w'_n[L:K']}$ (resp. $xy=z^{w''_n[L:K'']}$).
Since  $\calX'\times_{\Spec R'} \Spec S\cong \calX''\times_{\Spec R''} \Spec S$ by the uniqueness of the stable reduction, we get that
$w'_n[L:K']=w''_n[L:K'']$. This implies that
$$l'(e)=\frac{w'_{n}}{[K':K]}=\frac{w'_{n}[L:K']}{[L:K]}=\frac{w''_{n}[L:K'']}{[L:K]}=\frac{w''_{n}}{[K'':K]}=l''(e),$$
which shows that $l'$ is equal to $l''$, q.e.d.
\end{proof}

\begin{remark}\label{R:tropmap-curves}
Given a curve $X$ over $K$, the metrized graph underlying the tropical curve $\trop(X)$ associated to $X$ in Lemma-Definition \ref{D:tropmap-curves} is the reduction (metrized) graph 
of $X$ as defined in \cite[p. 9-10]{CR}. 
Moreover, $\trop(X)$ is the minimal skeleton in the Berkovich analytification $X^{\rm an}$  of $X$, see \cite[Cor. 5.50]{BPR}.

\end{remark}

Now that we have defined the reduction map $\red:\MMg(K)\to \MMgb(k)$ and the tropicalization map $\trop:\MMg(K)\to \Mgt$, it is easy to prove the first half of Theorem \ref{T:mainthm3}.

\begin{proof}[Proof of Theorem \ref{T:mainthm3}\eqref{T:mainthm3i}]
By comparing Lemma-Definition \ref{D:red-curves} and Lemma-Defi\-nition \ref{D:tropmap-curves}, one easily realizes that, for a smooth curve $X\in \MMg(K)$, the combinatorial type
of the tropical curve $\trop(X)\in \Mgt$ is equal to the dual weighted graph of the stable curve $\red(X)\in \MMgb(k)$.
The conclusion now follows.
\end{proof}

\end{nota}

\section{Moduli spaces of abelian varieties}\label{S:abvar}

\subsection{The moduli space $\Agt$ of tropical  p.p.  abelian varieties}\label{S:tropabvar}

Recall the definition of a tropical principally polarized abelian variety introduced in  \cite{BMV}, generalizing slightly the original definition of Mikhalkin-Zharkov in \cite{MZ}.

\begin{defi}\label{D:trop-abvar}
A \emph{tropical p.p.} (= principally polarized) \emph{abelian variety} $A$ of dimension
$g$ is a pair $(\bbR^g/\Lambda, Q)$ consisting of a $g$-dimensional real torus $\bbR^g/\Lambda$ (for  a rank-$g$ lattice $\Lambda\subset \bbR^g$)
and  $Q$ is a positive semi-definite quadratic form on $\bbR^g$ such that the null space
$\Null(Q)$ of $Q$ is defined over $\Lambda\otimes \bbQ$, i.e. it admits a basis with elements in $\Lambda\otimes \bbQ$. Two tropical p.p. abelian varieties
$A=(\bbR^g/\Lambda,Q)$ and $A'=(\bbR^g/\Lambda',Q')$ are isomorphic if there exists
$h\in GL(g,\bbR)$ such that $h(\Lambda)=\Lambda'$ and $hQh^t=Q'$.
\end{defi}

Indeed, tropical p.p. abelian varieties up to isomorphism are the same thing as positive semi-definite quadratic forms
up to arithmetic equivalence, as shown in the following

\begin{remark}\label{R:equiv-abvar}
\noindent
\begin{enumerate}[(i)]
\item Every tropical p.p. abelian variety $A=(\bbR^g/\Lambda,Q)$ can be written in the form
$(\bbR^g/\bbZ^g,Q')$. In fact, it is enough to consider $Q'=hQh^t$, where
$h\in GL(g,\bbR)$ is such that $h(\Lambda)=\bbZ^g$.
\item $(\bbR^g/\bbZ^g,Q)\cong (\bbR^g/\bbZ^g,Q')$ if and
only if there exists $h\in \GL_g(\bbZ)$ such that $Q'=h Q h^t$, i.e., if and only if
$Q$ and $Q'$ are arithmetically equivalent.
\end{enumerate}
\end{remark}

Before stating the main properties of the moduli space $\Agt$ of tropical p.p. abelian varieties, we need
a digression into Delaunay decompositions of $\bbR^ g$.

\begin{defi} \label{D:paving}
\noindent
\begin{enumerate}[(i)]
\item A $\bbZ^g$-periodic integral {\it paving}
(or face-fitting decomposition)
of $\bbR^g$ of maximal rank $g$  is a set $\Delta$ of integral
polytopes $\omega\subset \bbR^g$ satisfying:
\begin{enumerate}[(a)]
\item $\bbR^g=\cup_{\omega\in \Delta} \omega$;
\item Any face of $\omega\in \Delta$ belongs to $\Delta$;
\item For any $\omega$, $\omega'\in \Delta$, the intersection
$\omega\cap \omega'$ is either empty or a common face of $\omega$ and
$\omega'$;
\item $\Delta$ is invariant by translation of $\bbZ^g$, i.e.
for any $\omega\in \Delta$ and any $h\in \bbZ^g$ the translate
$\omega+h$ belongs to $\Delta$;
\item $\#\{\omega \mod \bbZ^g\}$ is finite.
\end{enumerate}
A $\bbZ^g$-periodic integral paving
of $\bbR^g$ of rank $0\leq r\leq g$ is a set $\Delta$ of
polyhedra obtained as inverse images via a linear integral projection
$\pi:\bbR^g\to \bbR^r$ of the polytopes of
a $\bbZ^r$-periodic integral paving
$\Delta'$ of $\bbR^r$ of maximal rank.
\item Two $\bbZ^g$-periodic integral  pavings $\Delta$ and $\Delta'$ of $\bbR^g$ are \emph{equivalent} if there exists
$h\in \GL_g(\bbZ)$ such that $\Delta'$ is equal to
$$h\cdot \Delta:=\{h\cdot \omega \: :\: \omega \in \Delta\}. $$
We denote by $[\Delta]$ the equivalence class of a paving $\Delta$ of $\bbR^ g$.
\item Given two $\bbZ^g$-periodic integral  pavings $\Delta$ and $\Delta'$ of $\bbR^g$, we say that $\Delta$ is a
\emph{refinement} of $\Delta'$, and we write $\Delta\geq \Delta'$, if every polyhedron of $\Delta$ is contained in some polyhedron of $\Delta'$.

Similarly, we say that $[\Delta]$ is a refinement of $[\Delta']$, and we write $[\Delta]\geq [\Delta']$, if there exist
$h, h'\in \GL_g(\bbZ)$ such that $h\cdot \Delta \geq h'\cdot \Delta'$.
\end{enumerate}
\end{defi}

\vspace{0,2cm}

Among the $\bbZ^g$-periodic integral pavings of $\bbR^g$,
a special place is occupied by the Delaunay decompositions
associated to a positive semi-definite quadratic forms in $\bbR^g$,
whose null space is defined over $\bbQ^g$.

\begin{defi}
\label{D:Del-deco}
Let $Q$ be a positive semi-definite quadratic
form of rank $r$ in $\bbR^g$, whose null space $\Null(Q)$ is defined over $\bbQ^g$.
For any $\alpha\in \bbR^g$,  a lattice element
$x \in \bbZ^g$ is  called $\alpha$-nearest if
$$Q(x-\alpha)={\rm min}\{Q(y-\alpha)\: : \: y\in \bbZ^g\}.$$
A Delaunay cell is defined as  the
closed convex hull of all
elements of $\bbZ^g$
which are $\alpha$-nearest for some fixed $\alpha \in \bbR^g$.
Together, all the Delaunay cells form a $\bbZ^g$-periodic
integral paving of $\bbR^g$ of rank $r$, called the {\it Delaunay
decomposition} of $Q$ and denoted $\Del_Q$.
We say that a $\bbZ^g$-periodic integral paving of $\bbR^g$ is a
Delaunay paving if it is isomorphic to $\Del_Q$ for some
quadratic form $Q$ as before.
\end{defi}

Clearly, if two quadratic forms $Q$ and $Q'$ are arithmetic equivalent in the sense of Remark \ref{R:equiv-abvar}, then
their associated Delaunay decompositions $\Del_Q$ and $\Del_{Q'}$ are equivalent in the sense of Definition \ref{D:paving}.
This show that we can associate to every tropical p.p. abelian variety an equivalence class of Delaunay decompositions of $\bbR^g$.

\begin{defi}\label{D:Del-abvar}
Given a tropical p.p. abelian variety $A\cong (\bbR^g/\bbZ^g, Q)$ (see Remark \ref{R:equiv-abvar}), the Delaunay decomposition $[\Del_A]$ of $A$ is defined to be
$$[\Del_A]:=[\Del_Q].$$
\end{defi}

We are now ready to summarize the main properties of the moduli space of tropical p.p.   abelian varieties.

\begin{fact}
\label{F:Agt}
\noindent
\begin{enumerate}[(i)]
\item \label{F:Agt1} There exists a topological space $\Agt$ whose points are in natural bijection with tropical p.p.   abelian varieties of dimension $g$.
Moreover, the topological space $\Agt$ is 
normal (hence Hausdorff), locally compact, paracompact, locally contractible, metrizable and second countable.
\item \label{F:Agt2}  The topological space $\Agt$ admits a stratification into disjoint locally closed subsets
$$\Agt=\coprod_{[\Delta]} \Agt([\Delta]),$$
as $[\Delta]$ varies among all equivalence classes of Delaunay decompositions of $\bbR^g$ and
$$\Agt([\Delta]):=\{A\in \Agt\: : \: [\Del_A]=[\Delta]\}.$$
\item  \label{F:Agt3} Given two equivalence classes $[\Delta]$ and $[\Delta']$ of Delaunay decompositions of $\bbR^g$,
we have  that
$$\ov{\Agt([\Delta])}\supseteq \Agt([\Delta'])\Leftrightarrow [\Delta]\geq [\Delta'].$$
\end{enumerate}
\end{fact}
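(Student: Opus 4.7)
The plan is to mirror the treatment of $\Mgt$ in Fact \ref{F:Mgt}, using classical Voronoi reduction theory for positive semi-definite quadratic forms. By Remark \ref{R:equiv-abvar}, the underlying set of $\Agt$ is naturally in bijection with the quotient
$$\Omega_g^{\rm rt}/\GL_g(\bbZ),$$
where $\Omega_g^{\rm rt}\subset \mathrm{Sym}^2(\bbR^g)$ is the cone of positive semi-definite quadratic forms whose null space is defined over $\bbQ$, and $\GL_g(\bbZ)$ acts by $Q \mapsto hQh^t$. The task is to equip this quotient with a topology realizing properties \eqref{F:Agt1}--\eqref{F:Agt3}.

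First I would recall the classical Voronoi decomposition of $\Omega_g^{\rm rt}$: for every $\bbZ^g$-periodic integral Delaunay paving $\Delta$ of $\bbR^g$, the locus
$$\sigma_\Delta := \{Q \in \Omega_g^{\rm rt} \: : \: \Del_Q = \Delta\}$$
is the relative interior of a rational polyhedral cone in $\Omega_g^{\rm rt}$; its closure consists of those $Q'$ for which $\Del_{Q'}$ refines $\Delta$, and its proper faces are the cones $\sigma_{\Delta'}$ for proper refinements $\Delta \geq \Delta'$. The $\GL_g(\bbZ)$-action permutes these cones ($h \cdot \sigma_\Delta = \sigma_{h\cdot\Delta}$), and gluing them along common faces modulo this action endows $\Agt = \Omega_g^{\rm rt}/\GL_g(\bbZ)$ with the stacky fan structure constructed in \cite[\S 4]{BMV}, hence with a canonical topology.

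Given this description, parts \eqref{F:Agt2} and \eqref{F:Agt3} become essentially formal. Part \eqref{F:Agt2} merely records that $\Agt([\Delta])$ is the image of $\sigma_\Delta$ in the quotient and that every tropical p.p.\ abelian variety has a well-defined Delaunay class (Definition \ref{D:Del-abvar}), which is built into the construction. Part \eqref{F:Agt3} then follows from the fact that $\sigma_{\Delta'}$ is a face of $\overline{\sigma_\Delta}$ (modulo the $\GL_g(\bbZ)$-action) if and only if $[\Delta] \geq [\Delta']$ in the sense of Definition \ref{D:paving}, which is exactly the translation of the face relations in the Voronoi fan into the language of refinements of Delaunay pavings.

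The main obstacle lies in part \eqref{F:Agt1}: the quotient is by an infinite group acting on an infinite union of cones, so none of Hausdorffness, local compactness, paracompactness, metrizability or second countability is automatic. The decisive input is Voronoi's finiteness theorem, which states that there are only finitely many $\GL_g(\bbZ)$-orbits among the top-dimensional Delaunay cones, together with the properness of the $\GL_g(\bbZ)$-action on the rational closure of the positive definite cone. Combining these with the general stacky fan arguments carried out for $\Mgt$ in \cite[\S 2]{CMV} yields all the topological properties listed in \eqref{F:Agt1}, and I would invoke that reference directly (the arguments there apply verbatim, with $\Omega_g^{\rm rt}/\GL_g(\bbZ)$ replacing the analogous quotient for tropical curves) rather than reproduce the lengthy verifications.
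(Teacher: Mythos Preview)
Your proposal is correct and follows essentially the same route as the paper: the paper's proof simply cites \cite[\S 4]{BMV} and \cite[\S 4]{Cha} for the construction and for parts \eqref{F:Agt2}--\eqref{F:Agt3}, and \cite[\S 2]{CMV} for the topological properties in \eqref{F:Agt1}, which is exactly what you do (with additional explanatory unpacking of the Voronoi fan structure). The only point you omit is the paper's remark that the original definition of $\Agt$ in \cite[Def.~4.2.2]{BMV} contained an error corrected in \cite[Def.~4.9]{Cha}, but this is a historical note rather than a mathematical gap.
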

\begin{proof}
The construction of $\Agt$ and the properties \eqref{F:Agt2} and \eqref{F:Agt3}Ê can be found in \cite[\S 4]{BMV} or \cite[\S 4]{Cha}. Note that the definition of $\Agt$ given in \cite[Def. 4.2.2]{BMV}
contains a mistake that was corrected in \cite[Def. 4.9]{Cha}. The topological properties of $\Agt$ stated in \eqref{F:Agt1}Ê are proved in \cite[\S 2]{CMV}.
\end{proof}

\begin{nota}{\emph{The tropicalization map $\trop: \AAg(K)\to \Agt$}}
\label{N:trop-Ag}

We want now to define the tropicalization map $\trop: \AAg(K)\to \Agt$ appearing in the diagram \eqref{E:fund-diag}.

Recall that given an abelian variety $A$ over $K$ there is a canonical way of extending it to a separated group scheme over $\Spec R$, namely via the theory of N\'eron models.

\begin{defi}
Given an abelian variety $A$ over $K$, a \emph{N\'eron model} of $A$ over $\Spec R$ is a smooth, separated and finite type
group scheme $\calN(A)\to \Spec R$ such that its generic fiber $\calN(A)_K$ is isomorphic to $A$ and, moreover, such that
it satisfies the following universal property (called the N\'eron mapping property):
for each smooth morphism $\calY\to \Spec R$ and any $K$-morphism $\phi_K:\calY_K \to \calN(A)_K\cong A$ there exists a unique
morphism $\phi: \calY\to \calN(A)$ over $\Spec R$ extending the given morphism $\phi_K$.
\end{defi}

Clearly, the N\'eron mapping property uniquely characterizes the N\'eron model of an abelian variety $A$ over $K$, if it
exists at all. Indeed, it is a deep theorem of N\'eron that such models always exists.

\begin{fact}[N\'eron]\label{F:Nermod}
Any abelian variety over $K$ admits a N\'eron model $\calN(A)$ over $\Spec R$.
\end{fact}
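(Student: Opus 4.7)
The plan is to follow N\'eron's original construction, recast in scheme-theoretic terms by Raynaud and presented in Chapter $1$ of Bosch--L\"utkebohmert--Raynaud's \emph{N\'eron Models}. The strategy proceeds in four stages: build a weak N\'eron model, transport the group law as a birational group law on it, extend that birational group law to a genuine group scheme, and verify the N\'eron mapping property.

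Step one (weak N\'eron model): starting from any proper flat $R$-model of $A$ --- for instance the schematic closure of $A$ inside $\bbP^n_R$ obtained from a projective embedding via an ample line bundle --- I apply Raynaud's \emph{smoothening lemma}, which via a finite sequence of blow-ups in smooth centers supported on the special fiber arranges that every $R^{sh}$-valued point of $A$ factors through the smooth locus, where $R^{sh}$ denotes a strict henselization of $R$. Passing to this smooth locus produces a smooth separated $R$-scheme $\calX$ of finite type, with generic fiber $A$, such that the specialization map $\calX(R^{sh})\to A(K^{sh})$ is bijective.

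Step two (birational group law): the multiplication $m\colon A\times_K A\to A$ and inversion $i\colon A\to A$ extend to rational maps on $\calX\times_R\calX$ and $\calX$ respectively. After further smoothening and restriction to a suitable dense open subscheme $\calU\subset\calX$ still having generic fiber $A$, these rational maps assemble into a birational group law on $\calU$ in the sense of Weil. Step three (extension to a group scheme): the fundamental theorem on birational group laws over a Dedekind base --- originally due to Weil and reformulated scheme-theoretically by Artin and Raynaud --- guarantees that any birational group law on a smooth separated $R$-scheme extends to a genuine group-scheme structure on a smooth separated $R$-scheme $\calG$ containing $\calU$ as an open dense subscheme with unchanged generic fiber $A$.

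Step four (N\'eron mapping property): given a smooth $R$-scheme $\calY$ and a $K$-morphism $\phi_K\colon\calY_K\to A$, taking the Zariski closure of its graph inside $\calY\times_R\calG$ produces a rational map $\phi\colon\calY\da\calG$; Weil's extension theorem for rational maps from a smooth (hence regular) scheme into a separated group scheme then forces $\phi$ to be defined everywhere, yielding the required $R$-morphism $\calY\to\calG$, and one sets $\calN(A):=\calG$. The main obstacle will be the third step --- the extension of a birational group law to a bona fide group scheme --- together with Raynaud's smoothening lemma supporting the first two steps; both require delicate control of how rational sections interact with the special fiber under successive blow-ups in smooth centers.
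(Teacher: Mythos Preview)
Your proposal is a correct outline of the standard construction as presented in Bosch--L\"utkebohmert--Raynaud (though the construction spans Chapters~3--6 of that book rather than Chapter~1, which only states the result). The paper itself does not give a proof: it simply cites N\'eron's original paper and the book \cite{BLR} for a modern treatment, so your sketch is already considerably more detailed than what the paper provides, and it follows exactly the approach of the reference the paper points to.
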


For a proof, we refer the reader to the original paper of N\'eron \cite{Ner} or to the book \cite{BLR} for a modern treatment.

\vspace{0,2cm}

Recall now that given any smooth group scheme $\calA\to \Spec R$ (as for example the N\'eron model $\calN(A)$ of an abelian variety $A$ defined over $K$), there exists an open subgroup scheme $\calA^o\subseteq \calA$, called the \emph{neutral component} of $\calA$, such that the fibers of $\calA^o\to \Spec R$ are the connected components of the fibers of $\calA\to \Spec R$ which contain the identity
(see \cite[Expos\'e VIB, Thm. 3.10]{SGA3I}).

It was proved by Grothendieck (see \cite[Espos\'e IX, Thm. 3.6]{SGA7I}) that any abelian variety over $K$ is potentially semiabelian, i.e. that, after a finite extension of $K$,
the neutral component $\calN(A)^o$ of the N\'eron model of $A$ is a semiabelian scheme.

\begin{fact}[Grothendieck]\label{F:stabred}
Given an abelian variety $A$ over $K$, there exists a finite extension $K\subseteq K'$ such that the neutral component $\calN(A')^o$ of the N\'eron model $\calN(A')$ of $A':=A\times_K K'$ is a
semiabelian scheme, i.e. the special fiber $\calN(A')^o_s$ of $\calN(A')^o$ fits in a unique extension
\begin{equation}\label{E:can-ext}
0\to T' \to \calN(A')^o_s \to B'\to 0,
\end{equation}
where $T'$ is a torus over $k$ of dimension $r$ (called the \emph{rank} of $\calN(A')^o_s$) and $B'$ is an abelian variety
over $k$ of dimension $g-r$.
\end{fact}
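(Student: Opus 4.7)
The plan is to reduce the statement to the Galois criterion for semistable reduction via Grothendieck's $\ell$-adic monodromy theorem. I fix a prime $\ell$ invertible in $k$ and consider the Tate module $T_\ell(A):=\varprojlim_n A[\ell^n](\ov{K})$, a free $\bbZ_\ell$-module of rank $2g$ on which the absolute Galois group $G_K={\rm Gal}(\ov{K}/K)$ acts continuously. Since $k$ is algebraically closed, the inertia subgroup $I_K$ coincides with $G_K$, so the $G_K$-action on $V_\ell(A):=T_\ell(A)\otimes_{\bbZ_\ell}\bbQ_\ell$ is entirely inertial.

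First I would invoke Grothendieck's $\ell$-adic monodromy theorem, which asserts that this action is quasi-unipotent: some open subgroup of $I_K$ has image in $\GL(V_\ell(A))$ consisting of unipotent automorphisms. The standard argument uses that the tame quotient of $I_K$ is topologically (pro-)generated by elements on which any lift of Frobenius acts by the cyclotomic character; combined with the fact that eigenvalues of Galois elements on $V_\ell(A)$ are algebraic of bounded absolute value, this forces quasi-unipotence. Consequently, there exists a finite extension $K\subseteq K'$ (with valuation ring $R'$ and residue field $k$) such that $I_{K'}$ acts unipotently on $V_\ell(A_{K'})$.

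Second, I would apply the Galois criterion for semistable reduction, proved by Grothendieck as a strengthening of the N\'eron-Ogg-Shafarevich criterion: an abelian variety $A'$ over $K'$ has semistable reduction, i.e.\ $\calN(A')^o$ is a semiabelian scheme, if and only if the inertia $I_{K'}$ acts unipotently on $V_\ell(A')$. One direction is verified by analyzing the weight filtration on $T_\ell$ induced by the toric-abelian filtration of the special fiber of $\calN(A')^o$, noting that inertia stabilizes this filtration and acts trivially on the graded pieces. The converse, which is genuinely deeper, uses potential semistability combined with descent along the extension that trivializes the monodromy, controlling the unipotent part of the special fiber and the rational component group; this is the heart of SGA 7 and is where I expect the main obstacle to lie in a self-contained argument.

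Finally, once $\calN(A')^o\to \Spec R'$ is known to be semiabelian, I would obtain the required extension
$0\to T'\to \calN(A')^o_s\to B'\to 0$
by applying the Chevalley structure theorem to the connected smooth commutative group $\calN(A')^o_s$: the maximal subtorus $T'$ is uniquely determined, and the quotient $B'$ is projective hence an abelian variety. Uniqueness of the extension follows from the rigidity of tori together with the vanishing of homomorphisms from tori to abelian varieties. The dimension identity $g=r+\dim B'$ is read off from the fact that $\calN(A')^o\to\Spec R'$ has relative dimension $g$, matching the generic fiber $A'$.
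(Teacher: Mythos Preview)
The paper does not supply a proof of this Fact; it simply records it with a citation to \cite[Expos\'e IX, Thm.~3.6]{SGA7I} in the sentence preceding the statement. Your outline follows the spirit of the SGA~7 development, but there is a gap in your first step. The Frobenius argument you sketch for quasi-unipotence of the $\ell$-adic monodromy --- that a lift of Frobenius acts on tame inertia via the cyclotomic character, constraining the eigenvalues --- is the argument valid for \emph{finite} residue field. Under the paper's standing hypotheses (see \ref{S:notations}) the residue field $k$ is algebraically closed, so $G_K=I_K$ and there is no Frobenius to lift. The monodromy theorem does remain true in this generality, but its proof then requires an additional spreading-out step: descend $A$ to an abelian variety over a subfield of $K$ of finite type over the prime field, spread out over an integral base, and specialize to a closed point with finite residue field, where your Frobenius argument applies. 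As written, your invocation of quasi-unipotence is not justified in the setting at hand.

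A secondary point: your description of the hard direction of the Galois criterion (``uses potential semistability combined with descent'') reads as circular, since potential semistability is precisely the conclusion sought. In the cited reference the route is more direct and avoids this: one passes to a finite extension $K'/K$ over which the group scheme $A[n]$ becomes constant for some $n\geq 3$ prime to ${\rm char}(k)$, and then shows, via the structure theory of N\'eron models (notably the orthogonality theorem and the analysis of the unipotent radical of the special fiber), that $\calN(A')^o_s$ has no unipotent part, hence is semiabelian. Your final paragraph --- obtaining the extension \eqref{E:can-ext} from the Chevalley decomposition, its uniqueness from rigidity of tori, and the dimension count from the relative dimension of $\calN(A')^o$ --- is correct.
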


We call the scheme $\calN(A')^o$ as above the \emph{semiabelian reduction}  of $A$ with respect to the extension
$K\subseteq K'$. It is the analogue for abelian varieties of the stable reduction for curves (see \ref{N:redmap1}).
We also say that an  abelian variety  $A$ over $K$ has \emph{semiabelian reduction} if the neutral component $\calN(A)^o$ of the N\'eron model of $A$ is a
semiabelian scheme. So Fact \ref{F:stabred} is saying that any abelian variety $A$ over $K$ has potentially semiabelian reduction, i.e.
there exists a finite field extension $K\subseteq K'$ such that $A':=A\times_K K'$ has semiabelian reduction.

Moreover, in the case where an abelian variety $A$ over $K$ has semiabelian reduction,
Grothendieck has shown (see \cite[Expos\'e IX, Thm. 10.4]{SGA7I}) that any polarization $\xi$ (for example a principal polarization) on $A$ gives rise to a monodromy pairing on the lattice of
characters $\Lambda(T):=\Hom(T,\Gm)$ of the maximal torus $T$ of the central fiber of $\calN(A)^o$, as in \eqref{E:can-ext}.

\begin{fact}[Grothendieck]\label{F:monpair}
Let $A$ be an abelian variety over $K$ and assume that $A$ has semiabelian reduction.
Denote by $\Lambda(T):=\Hom(T,\Gm)$ the lattice of characters of the biggest torus $T$ contained in the special fiber $\calN(A)^o_s$ of $\calN(A)^o$ as in
 \eqref{E:can-ext} and by $\Lambda(T)_{\bbR}:=\Lambda(T)\otimes_{\bbZ} \bbR$ the associated real vector space. Then any polarization $\xi$ on $A$ gives rise to a
 positive definite quadratic form
\begin{equation}\label{E:quadmon}
Q_{\xi}:\Lambda_{\bbR}\otimes \Lambda_{\bbR}  \to \bbR,
\end{equation}
which is moreover integral over $\Lambda$ (i.e. such that $Q_{\xi}(\Lambda,\Lambda)\subseteq \bbZ$).
\end{fact}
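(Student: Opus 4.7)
The plan is to construct $Q_\xi$ from an interplay between the Néron mapping property and Grothendieck's monodromy pairing, and then to extract positive definiteness from the ampleness of the polarization line bundle.

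First, I would extend the polarization to Néron models. Since $A^\vee$ also has semiabelian reduction whenever $A$ does, its Néron model has neutral component whose special fiber sits in an extension
$$0 \to T^\vee \to \calN(A^\vee)^o_s \to B^\vee \to 0$$
with $B^\vee$ the abelian variety dual to $B$. The Néron mapping property applied to the smooth $R$-scheme $\calN(A)^o$ and the generic morphism $\xi\colon A\to A^\vee$ produces a canonical extension to an $R$-group homomorphism $\calN(\xi)\colon \calN(A)^o\to \calN(A^\vee)^o$; restriction to the special fiber and then to the toric parts gives a homomorphism of tori $\xi_T\colon T\to T^\vee$, which by Cartier duality yields a morphism of character lattices
$$\xi_T^*\colon \Lambda(T^\vee)\to \Lambda(T).$$

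Second, I would invoke Grothendieck's monodromy theorem, or equivalently Raynaud's non-archimedean uniformization $A^{\rm an}\cong \wt{E}^{\rm an}/M$, where $\wt{E}$ is the canonical semiabelian lift of $\calN(A)^o_s$ over $R$ and $M$ is a lattice of rank equal to the toric dimension $r$. The key input is a canonical identification $\Lambda(T^\vee)\cong M$ together with a canonical bilinear pairing
$$\langle -,-\rangle \colon M\times \Lambda(T)\to \bbZ$$
which becomes perfect after tensoring with $\bbQ$. I would then define
$$Q_\xi(\lambda,\mu) := \langle \xi_T^*(\lambda),\mu\rangle,\qquad \lambda,\mu\in \Lambda(T),$$
which produces a $\bbZ$-valued bilinear form on $\Lambda(T)$, hence an integral form on $\Lambda_\bbR$. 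Symmetry $Q_\xi(\lambda,\mu)=Q_\xi(\mu,\lambda)$ would then follow from the identity $\xi=\xi^\vee$ characterizing polarizations, together with the compatibility between duality of tori and the monodromy pairing.

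The step I expect to be the main obstacle is the \emph{positive definiteness} of $Q_\xi$: this is the real content of Grothendieck's theorem and is genuinely geometric rather than formal, resting on the ampleness of the polarization line bundle $L_\xi$ on $A$. Via the rigid-analytic uniformization, $L_\xi$ pulls back to a line bundle on $\wt{E}^{\rm an}$ equipped with an $M$-equivariant structure, and the $M$-action is governed by a cocycle whose quadratic part is exactly $Q_\xi$. The required positivity then has to be deduced from ampleness by a non-archimedean analogue of the classical Riemann bilinear relations for theta functions; setting this up rigorously (as in \cite[Exposé IX]{SGA7I}) is the technically delicate heart of the argument, since everything else in the plan is either formal (Néron mapping property, Cartier duality) or a direct invocation of Grothendieck's identification $\Lambda(T^\vee)\cong M$.
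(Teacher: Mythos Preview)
The paper does not prove this statement at all: it is recorded as a \emph{Fact} attributed to Grothendieck, with a bare citation to \cite[Expos\'e IX, Thm.~10.4]{SGA7I}, and no argument is given. So there is no in-paper proof to compare your proposal against.

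Your outline is essentially the standard route through SGA7I (or equivalently through Raynaud's non-archimedean uniformization as in \cite{FC}), and your diagnosis of where the genuine content lies---positive definiteness coming from ampleness of the polarization line bundle, via a non-archimedean analogue of the Riemann relations---is accurate.

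One bookkeeping issue worth flagging: in your formula $Q_\xi(\lambda,\mu)=\langle \xi_T^*(\lambda),\mu\rangle$ for $\lambda,\mu\in\Lambda(T)$, the map $\xi_T^*\colon \Lambda(T^\vee)\to\Lambda(T)$ as you have written it takes inputs in $\Lambda(T^\vee)$, not $\Lambda(T)$, so the expression does not typecheck. Under the identification $\Lambda(T^\vee)\cong M$ your map is really $M\to\Lambda(T)$, and the resulting form naturally lives on $M$ rather than on $\Lambda(T)$; to place it on $\Lambda(T)$ you need the dual identification $M'\cong\Lambda(T)$ for the uniformizing lattice $M'$ of $A^\vee$, together with the map $M'\to M$ induced by $\xi$ on that side. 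This is a matter of keeping track of which lattice is which under the orthogonality theorem, not a conceptual gap, but in a full write-up it would need to be straightened out.
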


The quadratic form $Q_{\xi}$ in \eqref{E:quadmon} is called the \emph{monodromy pairing} associated to the polarized
abelian variety $(A,\xi)$.

We are now ready to  define the tropicalization $\trop: \AAg(K)\to \Agt$ appearing in the diagram  \eqref{E:fund-diag}.

\begin{lemmadefi}\label{D:tropmap-abvar}
The tropicalization map
$$\trop: \AAg(K)\to \Agt$$
is defined by sending $(A, \xi)\in \AAg(K)$ into the tropical p.p.   abelian variety $(\bbR^g/\Lambda', Q')\in \Agt$ such that:
\begin{itemize}
\item $\Lambda'$ is equal to $\Lambda':=\bbZ^{g-r}\oplus \Lambda(T')$, where $\Lambda(T')$ is the lattice of characters
of the torus $T'$ appearing in the extension \eqref{E:can-ext} with respect to some chosen field extension $K\subseteq K'$ such that
the neutral component $\calN(A')^o$ of the N\'eron model of $A':=A\times_K K'$ is a semiabelian scheme;
\item The quadratic form is identically zero on $\bbR^{g-r}:=\bbZ^{g-r}\otimes_{\bbZ}\bbR$ while on $\Lambda(T')_{\bbR}$  it is equal to
\begin{equation}\label{E:quad-mon}
Q'_{\Lambda(T')_{\bbR}\otimes \Lambda(T')_{\bbR}}:=\frac{Q_{\xi'}}{[K':K]},
\end{equation}
where $Q_{\xi'}$ is the monodromy pairing of Fact \ref{F:monpair} associated to the p.p. abelian variety $(A',\xi'):=(A,\xi)\otimes_K K'\in \AAg(K')$.
\end{itemize}
The so defined tropical p.p. abelian variety $(\bbR^g/\Lambda', Q') \in \Agt$  does not depend on the chosen  field extension $K\subseteq K'$ and is denoted by $\trop(A,\xi)$.
\end{lemmadefi}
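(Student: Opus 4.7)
The plan is to follow the same strategy used in the proof of Lemma-Definition \ref{D:red-curves} for stable reductions: given two finite extensions $K\subseteq K'$ and $K\subseteq K''$ with valuation rings $R'$ and $R''$ such that $A':=A\times_K K'$ and $A'':=A\times_K K''$ both acquire semiabelian reduction, I would pass to a common finite field extension $K\subseteq L$ containing both $K'$ and $K''$ inside a fixed algebraic closure of $K$, with valuation ring $S$, and show that both data $(\bbR^g/\Lambda', Q')$ and $(\bbR^g/\Lambda'', Q'')$ agree with the tropical p.p.\ abelian variety computed via $L$.

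The first step is to compare the character lattices. The base change $\calN(A')^o \times_{\Spec R'} \Spec S$ is a semiabelian scheme over $\Spec S$ whose generic fiber is $A_L := A\times_K L$, and by the N\'eron mapping property it coincides with the neutral component $\calN(A_L)^o$ of the N\'eron model of $A_L$. In particular the toric part $T_L$ of the special fiber $\calN(A_L)^o_s$ is obtained by base change of the toric part $T'$ of $\calN(A')^o_s$, which yields a canonical identification $\Lambda(T_L)=\Lambda(T')$. The analogous identification with $T''$ then gives $\Lambda'=\Lambda''$, so the underlying real torus $\bbR^g/\Lambda'$ does not depend on the chosen extension.

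The second, more delicate step is to compare the quadratic forms. Here the key input is Grothendieck's construction of the monodromy pairing (Fact \ref{F:monpair}): it is defined in terms of valuations of entries of the period matrix coming from the rigid-analytic (or formal) uniformization of $(A',\xi')$. Because the residue field $k$ is algebraically closed, the finite extension $K'\subseteq L$ is totally ramified, so the normalized valuation of $L$ restricts to $[L:K']$ times the normalized valuation of $K'$. Consequently, the monodromy pairing satisfies the scaling identity
\begin{equation*}
Q_{\xi_L} \;=\; [L:K']\cdot Q_{\xi'},
\end{equation*}
where $(A_L,\xi_L):=(A',\xi')\otimes_{K'} L$. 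Combined with the multiplicativity $[L:K]=[L:K']\cdot[K':K]$, this yields
\begin{equation*}
\frac{Q_{\xi'}}{[K':K]} \;=\; \frac{Q_{\xi_L}}{[L:K]} \;=\; \frac{Q_{\xi''}}{[K'':K]},
\end{equation*}
which is precisely what is needed.

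The main technical obstacle I expect is a clean justification of this scaling law for the monodromy pairing under finite base change; although morally it is just a statement that valuations scale by the ramification index, a rigorous proof requires either appealing to the explicit construction in \cite[Expos\'e IX]{SGA7I} or invoking the compatibility of Raynaud's uniformization with finite extensions of the base. Once this scaling is established, independence of the chosen field extension follows formally from the uniqueness of the semiabelian reduction and the N\'eron mapping property, completing the proof.
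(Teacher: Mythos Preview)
Your proposal is correct and follows essentially the same approach as the paper: pass to a common finite extension $L$, identify the character lattices via base change of the semiabelian neutral component, and use the scaling law for the monodromy pairing to conclude. The paper handles the two points you flag as needing justification by citing \cite[Expos\'e IX, Cor.~3.3]{SGA7I} for the identification $\calN(A_L)^o \cong \calN(A')^o \times_{\Spec R'} \Spec S$ and \cite[Expos\'e IX, (10.3.5)]{SGA7I} for the scaling identity $Q_{\xi_L} = [L:K']\,Q_{\xi'}$, so your anticipated ``main technical obstacle'' is resolved exactly by the SGA7I reference you suspected.
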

\begin{proof}
Let $K'$ and $K''$ two finite field extensions of $K$ such that the neutral components $\calN(A')^o$ (resp. $\calN(A'')^o$) of $A':=A\times_K K'$ (resp. $A'':=A\times_K K''$)
are semiabelian schemes over the spectrum of the valuation ring $R'$ (resp. $R''$) of $K'$ (resp. $K''$).

As in the proof of Lemma-Definition \ref{D:red-curves}, we can find a finite field extension $K\subseteq L$, with valuation ring $S$,  that contains $K'$ and $K''$ as subfields.
It follows from \cite[Expos\'e IX, Cor. 3.3]{SGA7I} that $\calN(\wt{A})^o=\calN(A')^o\times_{\Spec R'} \Spec(S)=\calN(A'')^o\times_{\Spec R''}\Spec(S)$, where $\wt{A}:=A\times_K L$.
In particular we can canonically identify the lattice of characters $\Lambda(\wt{T})$ of the maximal torus $\wt{T}$ of $\calN(\wt{A})^o_s$ with the lattice of characters $\Lambda(T')$
(resp. $\Lambda(T')$) of the maximal torus $T'$ (resp. $T''$) of $\calN(A')^o_s$ (resp. $\calN(A'')^o_s$).

With respect to these canonical identifications, the monodromy pairing $Q_{\wt{\xi}}$ of the p.p. abelian variety $(\wt{A},\wt{\xi})=(A,\xi)\times_K L\in \AAg(L)$ is related to the
monodromy pairing $Q_{\xi'}$ (resp. $Q_{\xi''}$) of the p.p. abelian variety $(A',\xi'):=(A,\xi)\times_K K'\in \AAg(K')$ (resp. $(A'',\xi''):=(A,\xi)\times_K K''\in \AAg(K'')$)
via the formulas (see \cite[Expos\'e IX, (10.3.5)]{SGA7I}):
\begin{equation}\label{E:mon-basechange}
Q_{\wt{\xi}}=[L:K']Q_{\xi'}=[L:K'']Q_{\xi''} .
\end{equation}
By combining \eqref{E:quad-mon} and \eqref{E:mon-basechange}, we get that
$$Q'=\frac{Q_{\xi'}}{[K':K]}=\frac{Q_{\wt{\xi}}}{[L:K]}=\frac{Q_{\xi''}}{[K'':K]}=Q'',$$
where $Q'$ (resp. $Q''$) is the quadratic form associated to the extension $K\subseteq K'$ (resp. $K\subseteq K''$).
This shows that the definition of $\trop(X)$ is independent of the chosen field extension $K\subseteq K'$.
\end{proof}

\end{nota}

\subsection{The moduli stack $\AAgb$ of p.p. stable semi-abelic pairs}\label{S:ppSSAP}

The moduli stack $\AAg$ of principally polarized (p.p. for short) abelian varieties of dimension $g$ admits a modular compactification via p.p. stable
semi-abelic pairs.

\begin{defi}[Alexeev]
A \emph{p.p. stable semi-abelic pair} of dimension $g$ over $k$ is a triple
$( G \curvearrowright P, \Theta)$ where
\begin{itemize}
 \item[(i)] $G$ is a semiabelian variety of dimension $g$ over $k$, that is an algebraic
group which is an
extension of an abelian variety $A$ by a torus  $T$:
$$1\to T\to G \to A \to 0.$$
 \item[(ii)] $P$ is a seminormal, connected, projective variety of pure
dimension $g$.
\item[(iii)] $G$ acts on $P$ with finitely many orbits, and with connected
and reduced stabilizers contained in the toric part $T$ of $G$.
\item[(iv)] $\Theta$ is an effective ample Cartier divisor on $P$ which
does not contain
any $G$-orbit, and such that $h^0(P, \calO_P(\Theta))=1$.
\end{itemize}
\end{defi}

Recall that a $k$-variety $X$ is said to have seminormal singularities if any morphism $Y\to X$ from a $k$-variety $Y$ which is bijective on $k$-points is an isomorphism. 

\begin{remark}\label{R:abvar-abelic}
If $(A \curvearrowright P, \Theta)$ is a p.p. stable semi-abelic pair with $A$ being an abelian variety,
then $P$ is a $A$-torsor and the divisor $\Theta\subset P$ gives rise to a well-defined class $[\Theta]$ in the N\'eron-Severi group of $A$
which is a principal polarization on $A$. Conversely, every p.p. abelian variety $(A,\xi)$ can be obtained in  this way from a unique p.p. stable semi-abelic pair
$(A \curvearrowright P, \Theta)$. See \cite[Sec. 3]{Ale1} for more details on this correspondence.
\end{remark}

The following celebrated result is due to Alexeev \cite{Ale1}.

\begin{fact}[Alexeev]
The stack $\AAgmod$ of p.p. stable semi-abelic pairs of dimension $g$ is proper over $\Spec \bbZ$.
The stack $\AAg$ can be identified with  the open substack of $\AAgmod$ consisting of the p.p. stable semi-abelic pairs $( G \curvearrowright P, \Theta)$ such that
$G$ is an abelian variety.
\end{fact}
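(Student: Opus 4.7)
The plan is to first establish that $\AAgmod$ is an algebraic stack of finite type over $\Spec \bbZ$, and then verify the valuative criterion of properness, keeping the second statement about $\AAg\subset \AAgmod$ as an openness/identification argument at the end. For the finiteness of type step, I would bound the Hilbert polynomial of a p.p.\ stable semi-abelic pair $(G\curvearrowright P,\Theta)$: since $\Theta$ is ample, some fixed multiple $N\Theta$ (depending only on $g$) gives a projective embedding of $P$ with a controlled Hilbert polynomial, and the condition $h^0(P,\calO_P(\Theta))=1$ pins down the linear equivalence class of $\Theta$. This way $\AAgmod$ can be realized as a quotient stack of a locally closed subscheme of a suitable Hilbert scheme by the relevant $\GL$-action, giving algebraicity and finite type.

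For separatedness, the strategy is to take two families of p.p.\ stable semi-abelic pairs over $\Spec R$ whose generic fibers are isomorphic, and to show the isomorphism extends. The semiabelian group schemes $\calG_1,\calG_2$ can be identified via the uniqueness of N\'eron models / semiabelian extensions (using Fact~\ref{F:stabred} on the abelian quotient part and uniqueness of tori, i.e.\ the fact that the torus part is determined by the character lattice, which is encoded in the generic fiber). The seminormality of $P$ and the ampleness of $\Theta$, combined with the fact that $\Theta$ determines a principal polarization, should then force $P_1\cong P_2$ and $\Theta_1\equiv \Theta_2$ compatibly with the $\calG$-action.

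The key obstacle is existence in the valuative criterion, and this is where most of the work should go. Starting from $(G_K\curvearrowright P_K,\Theta_K)$ over $\Spec K$, Fact~\ref{F:stabred} of Grothendieck gives, after a finite extension, a semiabelian extension $\calG\to \Spec R$ of $G_K$; let $T\subset \calG_s$ be the maximal torus with character lattice $\Lambda(T)$. The principal polarization gives the integral positive semi-definite monodromy quadratic form $Q_\xi$ of Fact~\ref{F:monpair} on $\Lambda(T)_\bbR$, which in turn determines a Delaunay decomposition $\Del_{Q_\xi}$ of $\Lambda(T)_\bbR$. Following Mumford's construction of degenerating abelian varieties, I would glue a relatively projective toric degeneration modelled on $\Del_{Q_\xi}$ over the formal completion of $\Spec R$ at the closed point, combined with the abelian part, to produce a family of p.p.\ stable semi-abelic pairs $(\calG\curvearrowright \calP,\Theta)\to \Spec R$ whose generic fiber recovers the original $(G_K\curvearrowright P_K,\Theta_K)$; Grothendieck's algebraization theorem turns the formal family into an algebraic one. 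The hard part is arranging the combinatorial/toric gluing so that $\calP_s$ is seminormal, connected, projective of pure dimension $g$, and so that the divisor $\Theta$ extends without containing any orbit and with $h^0=1$.

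Finally, for the identification $\AAg\subset \AAgmod$: when $G$ is an abelian variety, $T$ is trivial, all stabilizers are trivial (since they must be contained in $T$), and the action of $G$ on the connected, pure-dimensional variety $P$ must have a single open dense orbit, so $P$ is a $G$-torsor; together with Remark~\ref{R:abvar-abelic} this yields a bijection with $\AAg(k)$. Openness follows because the locus in $\AAgmod$ where the toric rank of the semiabelian part vanishes is open (the rank is upper semi-continuous on the base), so $\AAg$ is an open substack of $\AAgmod$.
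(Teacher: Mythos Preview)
The paper does not give its own proof of this Fact: it is stated as a result of Alexeev and attributed to \cite{Ale1}, with no argument beyond the citation. So there is nothing in the paper to compare your proposal against.

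That said, your outline is broadly in the spirit of Alexeev's actual proof in \cite{Ale1}: boundedness via a uniform very-ampleness statement for $\Theta$, realization as a quotient of a locally closed piece of a Hilbert scheme, and the valuative criterion handled through a Mumford-type degeneration governed by the Delaunay decomposition attached to the monodromy form. A few cautions if you intend to flesh this out: the existence of a fixed $N$ (depending only on $g$) making $N\Theta$ very ample is itself a nontrivial theorem that Alexeev has to prove, not an input you can assume; and your separatedness sketch only pins down the semiabelian scheme $\calG$ via N\'eron-model uniqueness, whereas one must also show that the pair $(\calP,\Theta)$ with its $\calG$-action is uniquely determined, which requires further argument (seminormality and ampleness alone do not obviously force this). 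The identification of $\AAg$ with the open locus where $G$ is abelian is correctly argued.
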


Unfortunately, the stack $\AAgmod$ is not irreducible (see \cite{Ale0}).  Therefore, only one of its irreducible components, called the \emph{main component} of $\AAgmod$
and denoted by  $\AAgb$,  will contain $\AAg$. Indeed, it is known that the normalization of the main component $\AAgb$  is isomorphic to the 2nd Voronoi
toroidal compactification  $\ov{\calA}^V_g$ of $\AAg$ (see \cite{AMRT} and \cite{NamT}).
To the best of our knowledge, it is not known whether the  main component $\AAgb$ is normal (see \cite{Bri}).

\vspace{0,2cm}

\begin{nota}{\emph{The stratification of $\AAgb(k)$}}
\label{N:strata-Ag}

According to general theory developed in \cite{Ale1}, to every p.p. stable semi-abelic pair $(G \curvearrowright P, \Theta)$ over $k$, it is naturally associated a $\bbZ^g$-period
integral paving of $\bbR^g$, up to the action of $\GL_g(\bbZ)$,  which captures the combinatorics of the $G$-orbits on $P$. Moreover, such a paving is a
 Delaunay decomposition if and if  $(G \curvearrowright P, \Theta)$ belongs to the main component $\AAgb$ (see \cite{Ale0}). In this way we get a stratification
 of $\AAgb$ into locally closed subsets parametrized by equivalence classes of Delaunay decompositions of $\bbR^g$.




\begin{fact}[Alexeev]\label{F:paving-Del}
\noindent
\begin{enumerate}[(i)]
\item \label{F:paving-Del1} A p.p. stable semi-abelic pair $(G \curvearrowright P, \Theta)\in \AAgmod(k)$ determines an equivalence class of a $\bbZ^g$-period
integral pavings of $\bbR^g$, which we denote by $[\Delta(G \curvearrowright P, \Theta)]$.

Furthermore, $(G \curvearrowright P, \Theta)$ belongs to the main
irreducible component $\AAgb(k)$ if and only if  $[\Delta(G \curvearrowright P, \Theta)]$ is an equivalence class of Delaunay decompositions of $\bbR^g$.

\item \label{F:paving-Del2}  The topological space $\AAgb(k)$ admits a stratification into disjoint locally closed subsets
$$\AAgb(k)=\coprod_{[\Delta]} \AAgb([\Delta]),$$
as $[\Delta]$ varies among all equivalence classes of Delaunay decompositions of $\bbR^g$ and
$$\AAgb([\Delta]):=\{(G \curvearrowright P, \Theta)\in \AAgb(k)\: : \: [\Delta(G \curvearrowright P, \Theta)]=[\Delta]\}.$$

Given two equivalence classes $[\Delta]$ and $[\Delta']$ of Delaunay decompositions of $\bbR^g$,
we have  that
$$\AAgb([\Delta])\subseteq \ov{\AAgb([\Delta'])}\Leftrightarrow [\Delta]\geq [\Delta'].$$
\end{enumerate}
\end{fact}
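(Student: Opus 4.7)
\emph{Proof plan.} The strategy is to combine Alexeev's structure theorem for stable semi-abelic pairs with the Mumford--Faltings--Chai construction of degenerations of polarized abelian varieties, and to use the identification of the normalization of the main component with the second Voronoi toroidal compactification $\ov{\calA}^V_g$ mentioned after the definition of $\AAgb$. I will proceed in three steps: associate a paving to every pair, characterize which pairs give a Delaunay decomposition, and finally transport the combinatorial closure relations from the Voronoi fan to $\AAgb(k)$.

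For the first step, given $(G \curvearrowright P,\Theta) \in \AAgmod(k)$ with semiabelian part $G$ sitting in an extension $1 \to T \to G \to A \to 0$ where $T$ has dimension $r$, the $G$-action on $P$ has finitely many orbits whose closures form glued fibrations of toric varieties over $A$-torsors. Using the cocharacter lattice $X_*(T) \cong \bbZ^r$, the combinatorics of these orbit closures in each $A$-fiber is encoded, as proved in \cite{Ale1}, by a $\bbZ^r$-periodic integral paving $\Delta'$ of $X_*(T)_{\bbR} \cong \bbR^r$ of maximal rank, obtained by gluing the moment polytopes of the toric components along their common faces in a $T$-equivariant way, with the divisor $\Theta$ supplying the ample polarization data of these polytopes. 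Pulling back $\Delta'$ along a linear integral projection $\bbR^g \twoheadrightarrow \bbR^r$ yields a rank-$r$ paving $\Delta$ of $\bbR^g$ in the sense of Definition \ref{D:paving}(i); a different choice of trivialization of $X_*(T)$ and of splitting differs by an element of $\GL_g(\bbZ)$, so the equivalence class $[\Delta(G \curvearrowright P,\Theta)]$ is well defined.

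To establish the Delaunay characterization in part (i), I would observe that every pair in $\AAgb(k)$ arises as the special fiber of a semiabelian degeneration of a p.p. abelian variety, since $\AAgb$ is the closure of $\AAg$ inside $\AAgmod$. The input of the Mumford--Faltings--Chai construction is precisely a positive semi-definite integral quadratic form $Q$ on $\bbZ^g$ with rational null space (the monodromy pairing attached to the polarization, in the spirit of Fact \ref{F:monpair}), and the paving of the resulting special fiber coincides with $\Del_Q$ by construction, because the moment polytopes of the toric pieces are forced by the ample divisor $\Theta$ to be Delaunay cells of $Q$. Conversely, pairs on the other irreducible components of $\AAgmod$ are shown in \cite{Ale0} to produce non-Delaunay pavings. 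For part (ii), the assignment $[\Delta(-)]$ is constructible on $\AAgb(k)$, yielding the disjoint stratification, and the closure relation follows from the fact that the normalization $\ov{\calA}^V_g \to \AAgb$ is a stratified bijection on geometric points, combined with the classical description of the Voronoi fan in which the cone of $[\Delta]$ lies in the closure of the cone of $[\Delta']$ precisely when $[\Delta] \geq [\Delta']$ in the refinement order of Definition \ref{D:paving}(iii).

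The main obstacle I expect is verifying, in the middle step, that the paving arising from a pair in the main component is genuinely Delaunay and not merely an abstract $\bbZ^g$-periodic integral paving: this requires unpacking the precise role of the ample divisor $\Theta$ in pinning down the Euclidean metric structure on the moment polytopes, which is the heart of Alexeev's arguments in \cite{Ale0, Ale1} and relies on the combinatorial characterization of Delaunay cells as convex hulls of lattice points simultaneously nearest to a given point of $\bbR^g$.
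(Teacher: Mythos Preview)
Your proposal follows essentially the same route as the paper's own proof: invoke Alexeev's structure theorems from \cite{Ale1} (and the characterization of the main component in \cite{Ale0}) for part~(i), and for part~(ii) transport the stratification and its closure relations from the second Voronoi toroidal compactification $\ov{\calA}^V_g$ along the finite normalization map $\ov{\calA}^V_g \to \AAgb$, where these relations are classical (\cite{AMRT}, \cite{NamT}). One small caution: you assert that this normalization is a ``stratified bijection on geometric points,'' but the paper explicitly remarks that it is unknown whether $\AAgb$ is normal, so bijectivity on points is not available; what you actually need (and what the paper uses) is only that the map is finite, surjective, and that each stratum $\AAgb([\Delta])$ is the image of the corresponding stratum upstairs, which suffices to transfer the closure relations since the associated paving is an invariant of the semi-abelic pair itself.
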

\begin{proof}
Part \eqref{F:paving-Del1} follows from the general structure theorems on p.p. stable semi-abelic varieties developed in \cite{Ale1} (see also \cite[Sec. 2]{Ale2} for a nice discussion).

Part \eqref{F:paving-Del2}: the strata of $\AAgb$ are the images of the strata of the 2nd Voronoi toroidal compactification $\ov{\calA}_g^V$
under the finite normalization map $\ov{\calA}_g^V \to \AAgb$ (see \cite{Ale0}) and the required properties are known for the strata of $\ov{\calA}_g^V$, as it follows from the general theory of
toroidal  compactifications of $\AAg$ (see \cite{AMRT} or \cite{NamT}). Therefore, the same properties hold for the strata of $\AAgb(k)$.

\end{proof}

\end{nota}

\begin{nota}{\emph{The reduction map $\red: \AAg(K)\to \AAgb(k)$}}
\label{N:redmap2}

We are now ready to define the reduction map $\red: \AAg(K)\to \AAgb(k)$ appearing in the diagram \eqref{E:fund-diag}.
Since the stack $\AAgb$ is proper, the valuative criterion of properness for stacks gives that for any map $f:\Spec K\to \AAg\subseteq \AAgb$ there exists
a finite extension $K'$ of $K$ with valuation ring $R'$ and a unique map $\phi:\Spec R'\to \AAgb$ such that the following diagram is commutative
$$
\xymatrix{
\Spec R'  \ar[rrrd]^{\phi}& & & \\
\Spec K' \ar[r] \ar[u] & \Spec K \ar[r]^f & \AAg \ar@{^{(}->}[r]& \AAgb.
}$$
In other words, given a p.p. abelian variety  $(A,\xi)\in \AAg(K)$, up to a finite extension $K\subseteq K'$ with valuation ring $R'$,
there exists a unique family of p.p. stable semi-abelic pairs $(\calG \curvearrowright \calP,\wt{\Theta})$ over $\Spec R'$,
called the \emph{stable semi-abelic reduction} of $(A,\xi)$ with respect to the extension $K\subseteq K'$, such that $(A,\xi)\times_K K'$ is the p.p. abelian variety
associated to the generic fiber of $(\calG \curvearrowright \calP,\wt{\Theta})$, according to Remark \ref{R:abvar-abelic}.
Note that the residue field of $R'$ is equal to $k$, since $k$ was assumed to be algebraically closed.

\begin{lemmadefi}\label{D:red-abvar}
The reduction map
$$\red: \AAg(K)\to \AAgb(k)$$
is defined by sending $(A,\xi)\in \AAg(K)$ to the central fiber $(\calG \curvearrowright \calP,\wt{\Theta})_s\in \AAgb(k)$ of the stable semi-abelic reduction
$(\calG \curvearrowright \calP,\wt{\Theta})$ of $(A,\xi)$ with respect to some finite field extension $K\subseteq K'$.
The isomorphism class of $(\calG \curvearrowright \calP,\wt{\Theta})_s\in \AAgb(k)$ does not depend on the chosen field extension $K\subset K'$ and is denoted by $\red(A,\xi)$.
\end{lemmadefi}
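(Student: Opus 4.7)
The plan is to imitate verbatim the proof of Lemma-Definition \ref{D:red-curves}, exploiting the fact that $\AAgb$ is proper (and in particular separated) exactly as $\MMgb$ was used in the curves case. Let $K'$ and $K''$ be two finite extensions of $K$ with valuation rings $R'$ and $R''$ such that $(A,\xi)$ admits a stable semi-abelic reduction $(\calG'\curvearrowright \calP',\wt{\Theta}')$ over $\Spec R'$ and a stable semi-abelic reduction $(\calG''\curvearrowright \calP'',\wt{\Theta}'')$ over $\Spec R''$. Fix an algebraic closure $\ov{K}$ of $K$ containing both $K'$ and $K''$, and let $L\subset \ov{K}$ be the smallest subfield containing both; then $L$ is a finite extension of $K$, and its valuation ring $S$ is a complete DVR with residue field $k$ (since $k$ was assumed algebraically closed).

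Next I would pull back both families to $\Spec S$. Each pullback $(\calG'\curvearrowright \calP',\wt{\Theta}')\times_{\Spec R'}\Spec S$ and $(\calG''\curvearrowright \calP'',\wt{\Theta}'')\times_{\Spec R''}\Spec S$ is a morphism $\Spec S\to \AAgb$ whose generic fiber is the p.p.\ stable semi-abelic pair associated, via Remark \ref{R:abvar-abelic}, to $(A,\xi)\times_K L\in \AAg(L)$. Therefore both pullbacks fit into the valuative-criterion diagram
\begin{equation*}
\xymatrix{
\Spec L \ar[r] \ar[d] & \AAg \ar@{^{(}->}[d] \\
\Spec S \ar[r] \ar@{.>}[ru] & \AAgb
}
\end{equation*}
with the same prescribed generic fiber. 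Since $\AAgb$ is proper over $\Spec \bbZ$ and in particular separated, the dotted arrow is unique; hence the two pullbacks to $\Spec S$ are isomorphic as morphisms to $\AAgb$. Passing to central fibers (which make sense because $S$ is a DVR) we get an isomorphism between the central fibers of the two pullbacks. But these central fibers coincide with $(\calG'\curvearrowright \calP',\wt{\Theta}')_s$ and $(\calG''\curvearrowright \calP'',\wt{\Theta}'')_s$ respectively, since the residue field of $S$ equals the residue field of $R'$ and of $R''$, namely the algebraically closed field $k$. This gives the desired isomorphism $(\calG'\curvearrowright \calP',\wt{\Theta}')_s\cong (\calG''\curvearrowright \calP'',\wt{\Theta}'')_s$ in $\AAgb(k)$.

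The only conceptual step that needs care is the uniqueness of the extension in the valuative criterion: for a Deligne-Mumford or Artin stack, uniqueness of the lift $\Spec S\to \AAgb$ holds up to a unique $2$-isomorphism, which is precisely the statement of separatedness built into the properness of $\AAgb$. This is therefore the main ingredient we rely on, exactly as in the curves case where the analogous role was played by the separatedness of $\MMgb$. No new computation specific to semi-abelic pairs is required; everything follows formally from the properness statement recalled after the definition of $\AAgmod$, together with the assumption that $k$ is algebraically closed which guarantees that the residue field does not change under the extensions $R\subseteq R'\subseteq S$ and $R\subseteq R''\subseteq S$.
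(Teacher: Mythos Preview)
Your proposal is correct and follows exactly the same approach as the paper, which simply states that the proof is identical to that of Lemma-Definition \ref{D:red-curves}, based on the uniqueness of the stable semi-abelic reduction. You have merely spelled out in full the argument that the paper leaves implicit.
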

\begin{proof}
Same proof as in Lemma-Definition \ref{D:red-curves} based on the uniqueness of the stable semi-abelic reduction.
\end{proof}

Now that we have defined the reduction map $\red:\AAg(K)\to \AAgb(k)$ and the tropicalization map $\trop:\AAg(K)\to \Agt$, we can prove the second half of Theorem \ref{T:mainthm3}.

\begin{proof}[Proof of Theorem \ref{T:mainthm3}\eqref{T:mainthm3ii}]
We have to prove that for any p.p. abelian variety $(A,\xi)\in \AAg(K)$ it holds
$$[\Del_{\trop(A,\xi)}]=[\Delta(\red(A,\xi))],$$
following the notations of Definition \ref{D:Del-abvar} and of Fact \ref{F:paving-Del}. This is simply a restatement in our language of what Alexeev proved in \cite[Sec. 5.7]{Ale1}.

\end{proof}

\end{nota}

\section{The Torelli maps}\label{S:Tormaps}

 \subsection{The tropical Torelli map}\label{S:trop-Tor}

 The tropical Torelli map $\tgt:\Mgt\to \Agt$ has been constructed in \cite{BMV} and further studied in \cite{Cha}. In order to recall the definition of $\tgt$, we need first to recall the definition of
the  tropical Jacobian associated to a tropical curve.

\begin{defi}\label{D:Jac}
Let $C=(\Gamma,w,l)$ be a tropical curve of genus $g$ and total weight
$|w|$. The {\it Jacobian} $\Jac(C)$ of $C$ is the tropical p.p.  abelian variety
of dimension $g$ given by the real torus $(H_1(\Gamma,\R)\oplus \R^{|w|})/(H_1(\Gamma,\Z)\oplus\Z^{|w|})$ together with the positive semi-definite quadratic form $Q_{C}=Q_{(\Gamma,w,l)}$ which
vanishes identically on $\R^{|w|}$ and is given on
$H_1(\Gamma,\R)$ as
\begin{equation}\label{D:def-quad}
Q_{C}\left(\sum_{e\in E(\Gamma)}\alpha_e \cdot e\right)=\sum_{e\in E(\Gamma)} \alpha_e^2\cdot l(e).
\end{equation}
\end{defi}
In other words, the value of the quadratic form $Q_C$ on a cycle of $\Gamma$, seen as an element of $H_1(\Gamma,\bbR)$, is equal to its length measured with respect to
the length function $l$ of  the tropical curve $C$.

\begin{remark}\label{R:Mum-curves}
The referee noticed that the quadratic form $Q_C$ defined in \eqref{D:def-quad} appears already in the definition of the canonical polarization on the Jacobian of a Mumford curve, 
see \cite{Ger} and  \cite[Prop. 2.2]{vdP}.
\end{remark}

\begin{fact}[\cite{BMV}]\label{F:trop-Tor}
The map (called the \emph{tropical Torelli map}) Ê
$$\begin{aligned}
\tgt:  \Mgt& \longrightarrow  \Agt \\
C & \mapsto \Jac(C)
\end{aligned}
$$
is a continuous map.
\end{fact}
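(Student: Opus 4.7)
The plan is to exploit the stacky-fan structures of $\Mgt$ and $\Agt$ recalled in Fact \ref{F:Mgt} and Fact \ref{F:Agt}. Using the identification $\Mgt(\Gamma,w)=\bbR_{>0}^{E(\Gamma)}/\Aut(\Gamma,w)$, continuity of $\tgt$ reduces to constructing, for each stable weighted graph $(\Gamma,w)$ of genus $g$, a continuous $\Aut(\Gamma,w)$-equivariant lift $\widetilde{\tgt}_{\Gamma,w}:\bbR_{\geq 0}^{E(\Gamma)}\to \calS$, where $\calS$ denotes the space of positive semi-definite quadratic forms on $\bbR^g$ with rational null space, and then verifying compatibility with the face identifications on both sides corresponding to edge contractions as in Definition \ref{D:spec-graphs}.

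For fixed $(\Gamma,w)$, pick a cycle basis $\gamma_1,\dots,\gamma_h$ of $H_1(\Gamma,\bbZ)$ (with $h=g-|w|$) together with the standard basis of $\bbZ^{|w|}$, thereby identifying $H_1(\Gamma,\bbZ)\oplus\bbZ^{|w|}\cong\bbZ^g$. Writing $\gamma_i=\sum_e a_{i,e}\cdot e$, the Gram entries $Q_{(\Gamma,w,l)}(\gamma_i,\gamma_j)=\sum_e a_{i,e}a_{j,e}\,l(e)$ are linear in $l$, so $\widetilde{\tgt}_{\Gamma,w}$ is linear, hence continuous. Any $\sigma\in\Aut(\Gamma,w)$ induces an integral change of basis of $H_1(\Gamma,\bbZ)\oplus\bbZ^{|w|}$ carrying $Q_{(\Gamma,w,l)}$ into $Q_{(\Gamma,w,\sigma^*l)}$, so after composition with the continuous quotient $\calS\to\Agt$ the lift descends to a continuous map on the stratum $\Mgt(\Gamma,w)$.

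The substantive step is gluing across strata. Given $(\Gamma,w)\geq(\Gamma',w')$ via a contraction set $S\subseteq E(\Gamma)$, split $S=S_{\mathrm{ne}}\sqcup S_{\mathrm{loop}}$ into non-loop and loop edges and consider $l_n\in\bbR_{>0}^{E(\Gamma)}$ with $l_n(e)\to 0$ for $e\in S$ and $l_n|_{E(\Gamma')}\to l'$. Contracting the edges in $S_{\mathrm{ne}}$ is a homotopy equivalence, so $H_1$ and $|w|$ are unchanged and the corresponding terms $l_n(e)\to 0$ simply drop out. For each loop $e\in S_{\mathrm{loop}}$, the class $[e]\in H_1(\Gamma,\bbZ)$ satisfies $Q_{(\Gamma,w,l_n)}([e],\cdot)\to 0$ and joins the null space of the limiting form; under the natural splitting $H_1(\Gamma,\bbZ)=H_1(\Gamma',\bbZ)\oplus\bigoplus_{e\in S_{\mathrm{loop}}}\bbZ\cdot[e]$ these new null directions account exactly for the extra $\bbZ^{|S_{\mathrm{loop}}|}$ inside $\bbZ^{|w'|}=\bbZ^{|w|+|S_{\mathrm{loop}}|}$ on which $Q_{(\Gamma',w',l')}$ vanishes identically. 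After an integral change of basis, $\lim_n\widetilde{\tgt}_{\Gamma,w}(l_n)=\widetilde{\tgt}_{\Gamma',w'}(l')$.

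The main obstacle will be upgrading this formal convergence of quadratic forms to convergence in the quotient $\Agt$, where one further quotients by $\GL_g(\bbZ)$ and identifies face cones of the second Voronoi decomposition. This amounts to verifying that the cone of cographic forms attached to $(\Gamma,w)$ has, as a face, the cone attached to $(\Gamma',w')$; equivalently, that the Delaunay decomposition of $Q_{(\Gamma,w,l)}$ refines in the expected way when edges are contracted. Granting this combinatorial statement, which is the core technical content of \cite{BMV} (and is compatible with the duality \eqref{E:dual2}), $\tgt$ is a morphism of stacky fans, and continuity follows immediately.
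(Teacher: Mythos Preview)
Your proposal is correct and follows exactly the approach the paper invokes from \cite[Thm.~5.1.5]{BMV}: exhibit $\tgt$ as a morphism of stacky fans by constructing, on each closed cone $\bbR_{\geq 0}^{E(\Gamma)}$, a linear $\Aut(\Gamma,w)$-equivariant lift to the space of rational positive semi-definite forms and then checking compatibility with the face identifications induced by edge contractions. Your final paragraph is slightly more than is needed for bare continuity (the $\GL_g(\bbZ)$-equivalence of limits you already established suffices to descend to the quotient $\Agt$), but it is precisely what upgrades continuity to the full stacky-fan-morphism statement proved in \cite{BMV}.
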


Indeed, it is proved in \cite[Thm. 5.1.5]{BMV} that $\tgt$ is a full map of stacky fans, i.e. that sends each strata of $\Mgt$ surjectively onto some strata of $\Agt$ via a linear map.
In order to make this result more precise, we need to recall the definition of the  Delaunay decomposition of $\bbR^g$ associated to a stable
weighted graph of genus $g$.

 \begin{defi}\label{D:graphs-Del}
Let $(\Gamma,w)$ be a stable weighted graph of genus $g$. Consider the positive semi-definite quadratic form  $Q_{(\Gamma,w)}$ on $H_1(\Gamma,\bbR)\oplus \bbR^{|w|}$
which is identically zero on $\bbR^{|w|}$ and is given on $H_1(\Gamma,\bbR)$ by
\begin{equation}
Q_{(\Gamma,w)}\left(\sum_{e\in E(\Gamma)}\alpha_e \cdot e\right)=\sum_{e\in E(\Gamma)} \alpha_e^2.
\end{equation}
By fixing an isomorphism of free abelian groups $\phi:H_1(\Gamma,\bbZ)\stackrel{\cong}{\to} \bbZ^{b_1(\Gamma)}$, we can view $Q_{(\Gamma,w)}$ as  a positive semi-definite quadratic form
on $\bbR^g$. The equivalence class $[\Del_{Q_{(\Gamma,w)}}]$ of the induced Delaunay decomposition of $\bbR^g$ (which clearly does not depend upon the chosen isomorphism $\phi$)
is called the Delaunay decomposition of $(\Gamma,w)$ and is denoted by $[\Del(\Gamma,w)]$.
\end{defi}

 \begin{remark}
It is well known  that an equivalent definition  of $[\Del(\Gamma,w)]$ is the following.
Each edge $e$ of $\Gamma$ gives rise to a linear functional $e^*$ on $H_1(\Gamma,\bbR)\oplus \bbR^{|w|}$ which is identically zero on $\bbR^{|w|}$ and
it is equal on $H_1(\Gamma,\bbR)$ to
$$e^*\left(\sum_{f\in E(\Gamma)}\alpha_f \cdot f\right)=\alpha_e.$$
After fixing an isomorphism $\phi:H_1(\Gamma,\bbZ)\stackrel{\cong}{\to} \bbZ^{b_1(\Gamma)}$ as before, the Delaunay decomposition $[\Del(\Gamma,w)]$ is the
$\bbZ^g$-periodic integral paving of $\bbR^g$ consisting of all polyhedra which are cut out
by all hyperplanes of equation $e^*=n$ for $e\in E(\Gamma)$ and $n\in \Z$. We refer the reader to \cite[Sec. 3.2]{CV1} for more details on the Delaunay decompositions
associated to graphs.
 \end{remark}

\begin{fact}[\cite{BMV}]\label{F:tropTor-strata}
The tropical Torelli map $\tgt$ sends the strata $\Mgt(\Gamma,w)\subset \Mgt$ surjectively onto the strata $\Agt([\Del(\Gamma,w)]\subset \Agt$, i.e.
$$\tgt(\Mgt(\Gamma,w))=\Agt([\Del(\Gamma,w)]$$
for each stable weighted graph $(\Gamma,w)$ of genus $g$.
\end{fact}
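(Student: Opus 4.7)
The strategy is to prove both inclusions $\tgt(\Mgt(\Gamma,w)) \subseteq \Agt([\Del(\Gamma,w)])$ and $\tgt(\Mgt(\Gamma,w)) \supseteq \Agt([\Del(\Gamma,w)])$ separately. The unifying idea is that, although the quadratic form $Q_C$ attached to the Jacobian genuinely depends on the edge length function $l$, the equivalence class of its Delaunay decomposition does not: it depends only on the combinatorial type $(\Gamma,w)$.

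For the inclusion $\subseteq$, I would fix $C=(\Gamma,w,l)\in \Mgt(\Gamma,w)$ and consider $\Jac(C) = (\bbR^g/\Lambda, Q_C)$. By Definition \ref{D:Del-abvar}, what has to be shown is $[\Del_{Q_C}]=[\Del(\Gamma,w)]$. I would exploit the hyperplane-arrangement description of $[\Del(\Gamma,w)]$ recalled in the Remark preceding the statement: the cells of $[\Del(\Gamma,w)]$ are cut out by the arrangement $\{e^{*}=n : e\in E(\Gamma),\ n\in \bbZ\}$ in $H_1(\Gamma,\bbR)\oplus \bbR^{|w|}$, an object that is manifestly independent of $l$. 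It then suffices to check that the cells of $\Del_{Q_C}$ are cut out by exactly this same arrangement for every positive $l$. This is a direct computation: viewing $Q_C$ as the restriction to $H_1(\Gamma,\bbR)\subseteq \bbR^{E(\Gamma)}$ of the diagonal form $\sum_{e}l(e)\,x_e^{2}$, the nearest-$\alpha$-lattice-point condition defining Delaunay cells reduces, edge by edge, to comparing the coefficient $e^{*}(\alpha)$ to the nearest integer, and this comparison is insensitive to the positive weight $l(e)$.

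For the reverse inclusion, given $A=(\bbR^g/\Lambda,Q)\in \Agt([\Del(\Gamma,w)])$ I would produce a tropical curve of combinatorial type $(\Gamma,w)$ with Jacobian $A$. The content here is that the assignment $\bbR^{E(\Gamma)}_{>0}\to \{Q\text{ p.s.d. on }\bbR^g : [\Del_Q]=[\Del(\Gamma,w)]\}$, $l\mapsto Q_{(\Gamma,w,l)}$, is surjective onto the relatively open cone of the Delaunay class, an instance of the classical description of the \emph{cographic cone} in the second Voronoi decomposition (cf.\ Namikawa and Oda--Seshadri). Granted this, any $Q$ as above is of the form $Q_C$ for some $C=(\Gamma,w,l)\in \Mgt(\Gamma,w)$, giving $\tgt(C)=A$ and proving equality. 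The main obstacle is the first half: rigorously verifying that the Delaunay decomposition of $Q_C$ is insensitive to $l$ and coincides with the graphical arrangement. Everything else reduces to bookkeeping with the lattice $H_1(\Gamma,\bbZ)$ and to invoking the structure of the principal cone in the space of positive semidefinite quadratic forms with rational null space.
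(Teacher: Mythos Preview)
The paper does not prove this Fact; it merely records it as a theorem from \cite{BMV} (cf.\ the sentence citing \cite[Thm.~5.1.5]{BMV} immediately before Definition~\ref{D:graphs-Del}). There is therefore no argument in the present paper against which to compare yours.

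On its own merits, your strategy is the right one, and you correctly isolate the crux: showing that $[\Del_{Q_C}]=[\Del(\Gamma,w)]$ for \emph{every} positive length function $l$. Your handling of the surjectivity direction via the cographic cone is also appropriate. The gap lies in your proposed justification of the inclusion $\subseteq$. You claim that, because $Q_C$ is the restriction of the diagonal form $\sum_e l(e)x_e^2$, the nearest-lattice-point problem ``reduces, edge by edge, to comparing the coefficient $e^*(\alpha)$ to the nearest integer.'' This would be valid if the lattice were $\bbZ^{E(\Gamma)}$, but the relevant lattice is $H_1(\Gamma,\bbZ)$, in general a \emph{proper} sublattice; the coordinates $e^*(x)$ of a lattice vector $x$ are not independent, so term-by-term minimisation is illegitimate. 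For instance, on the theta graph with basis $c_1=e_1-e_2$, $c_2=e_2-e_3$ of $H_1\cong\bbZ^2$ and $\alpha=(\tfrac13,\tfrac23)$, the set of $\alpha$-nearest lattice points is $\{(0,0),(0,1),(1,1)\}$ when $l=(1,1,1)$ but only $\{(0,1),(1,1)\}$ when $l=(1,1,100)$. The Delaunay \emph{decomposition} is nevertheless the same $A_2$ triangulation in both cases, but this is not a consequence of the coordinate-wise argument you propose; it is the genuine content of the statement, and its proof (in \cite[Sec.~3.2]{CV1} and \cite{BMV}) requires a more careful analysis of how the secondary cones of the second Voronoi fan interact with the cographic cone.
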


We can now prove  the second half of Theorem \ref{T:mainthm1}.

\begin{thm}\label{T:commuta2}
The following diagram is commutative
\begin{equation}\label{E:comdiag2}
\xymatrix{
  \MMg(K) \ar[d]^{t_g}\ar[r]^{\rm trop} & \Mgt \ar[d]^{t_g^{\rm tr}}\\
 \AAg(K)  \ar[r]^{\rm trop} & \Agt\\
}
\end{equation}
\end{thm}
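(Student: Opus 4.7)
The strategy is to unwind both compositions $\tgt \circ \trop$ and $\trop \circ t_g$ starting from a smooth projective curve $X \in \MMg(K)$ and verify that they produce the same tropical p.p. abelian variety. Fix a finite extension $K \subseteq K'$, with valuation ring $R'$, such that $X_{K'}$ admits a stable reduction $\calX' \to \Spec R'$; since a curve with stable reduction has Jacobian with semiabelian reduction, the same extension suffices to compute $\trop(t_g(X))$ via Lemma-Definition \ref{D:tropmap-abvar}. Let $(\Gamma, w)$ be the dual weighted graph of $\calX'_s$, and denote by $w_{n_e}$ the width of the node $n_e$ of $\calX'_s$ associated to an edge $e \in E(\Gamma)$.

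By Lemma-Definition \ref{D:tropmap-curves}, $\trop(X)$ has combinatorial type $(\Gamma, w)$ and length function $l'(e) = w_{n_e}/[K':K]$. By Definition \ref{D:Jac}, $\tgt(\trop(X))$ is then the tropical p.p. abelian variety with underlying real torus $(H_1(\Gamma,\bbR) \oplus \bbR^{|w|})/(H_1(\Gamma,\bbZ) \oplus \bbZ^{|w|})$ and quadratic form $Q_{\trop(X)}$ that vanishes on $\bbR^{|w|}$ and evaluates on a cycle $\gamma = \sum_e \alpha_e \cdot e \in H_1(\Gamma,\bbR)$ to $\sum_e \alpha_e^2 \cdot w_{n_e}/[K':K]$.

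To compute $\trop(t_g(X)) = \trop(\Jac(X), \Theta_X)$, I invoke Raynaud's classical structure theorem for N\'eron models of Jacobians of curves with stable reduction: the special fiber of $\calN(\Jac(X_{K'}))^o$ fits in an extension $0 \to T' \to \calN(\Jac(X_{K'}))^o_s \to B' \to 0$ in which the character lattice $\Lambda(T')$ is canonically isomorphic to $H_1(\Gamma, \bbZ)$ and $B' \cong \prod_{v \in V(\Gamma)} \Jac(\wt{C}_v)$, where $\wt{C}_v$ is the normalization of the component corresponding to $v$. Hence the torus rank equals $b_1(\Gamma)$ and $\dim B' = |w|$, so that the lattice $\Lambda' = \bbZ^{|w|} \oplus \Lambda(T') = \bbZ^{|w|} \oplus H_1(\Gamma, \bbZ)$ prescribed by Lemma-Definition \ref{D:tropmap-abvar} coincides with the one for $\tgt(\trop(X))$.

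It remains to match the quadratic forms. The key input is Grothendieck's explicit formula (SGA 7, Expos\'e IX, \S 12; see also \cite{BLR}) for the monodromy pairing on the Jacobian of a curve with stable reduction: viewed as a quadratic form on the character lattice $H_1(\Gamma, \bbZ)$ of $T'$, it is given by $Q_{\xi'}(\sum_e \alpha_e \cdot e) = \sum_e \alpha_e^2 \cdot w_{n_e}$. Dividing by $[K':K]$ as prescribed by \eqref{E:quad-mon} and remembering that the form is identically zero on the $\bbR^{|w|}$ factor in both constructions, one recovers exactly $Q_{\trop(X)}$, proving commutativity. The only genuinely nontrivial ingredients are Raynaud's description of the N\'eron model of the Jacobian and Grothendieck's closed formula for the monodromy pairing in terms of node widths; once these classical facts are on the table, the identification reduces to bookkeeping, with the common scaling factor $[K':K]$ on both sides explaining the independence from the chosen extension.
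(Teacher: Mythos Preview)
Your proposal is correct and follows essentially the same route as the paper: compute $\trop(X)$ via the stable reduction, identify the character lattice of the toric part of the N\'eron model with $H_1(\Gamma,\bbZ)$, and match the monodromy pairing with the quadratic form $Q_{\trop(X)}$.

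The one presentational difference worth flagging is how the node widths enter the monodromy pairing. The paper does not quote the weighted formula $Q_{\xi'}\bigl(\sum_e \alpha_e\, e\bigr)=\sum_e \alpha_e^2\, w_{n_e}$ directly. Instead it blows up the stable model $(w_n-1)$ times at each node $n$ to obtain a \emph{regular} model $\calY$, applies Raynaud's theorem (which, as stated in \cite[Sec.~9.5, Thm.~4]{BLR}, requires regularity of the total space) and the Picard--Lefschetz formula on $H_1(\Gamma_{\calY_s},\bbR)$, where all edge-coefficients are $1$, and then transports the result back along the subdivision isomorphism $H_1(\Gamma_{\calY_s},\bbZ)\cong H_1(\Gamma_{\calX_s},\bbZ)$. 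This makes transparent \emph{why} the widths appear: they count the edges in the subdivided graph replacing each original edge. Your shortcut is legitimate, but be aware that the reference you cite (SGA~7~I, Exp.~IX, \S 12) is formulated for regular models; the weighted version you invoke is exactly the blow-up computation carried out off-stage.
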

\begin{proof}
Let $X$ be an element of $\MMg(K)$, i.e. a  connected smooth projective curve of genus $g$ over $K$.

Assume first that $X$ has a stable model over $\Spec R$, i.e. there exists a family $\calX\to \Spec R$ of stable curves of genus $g$ such that the generic fiber $\calX_{\eta}$
is isomorphic to $X$.

According to Lemma-Definition \ref{D:tropmap-curves}, $\trop(X)$ has combinatorial type equal to the dual weighted graph $(\Gamma_{\calX_s}, w_{\calX_s})$
of the special fiber $\calX_s$ of $\calX$ and its length function $l:E(\Gamma_{\calX_s})\to \bbR_{>0}$ is such that, for every $e\in E(\Gamma_{\calX_s})$:
\begin{equation}\label{E:def-leng}
l(e)=w_{n_e}
\end{equation}
where $w_{n_e}$ is the width of the node $n_e\in \calX_s$ corresponding to $e$  (see \ref{N:tropmap1}).

By blowing up each node $n$ of the central fiber $\calX_s$ a number of times equal to $(w_n-1)$, we get that
a new family of nodal curves $\calY\to \Spec R$ such that $\calY_{\eta}\cong X$ and $\calY$ is regular. The central fiber $\calY_s$ of $\calY$ is a nodal (non stable, in general)
curve which is obtained from $\calX_s$ by inserting at each node $n$ of $\calX_s$ a  chain of smooth rational curves of length equal to $(w_n-1)$.
This implies that the dual graph $\Gamma_{\calY_s}$ of $\calY_s$ is obtained from the dual graph $\Gamma_{\calX_s}$ of $\calX_s$ by subdividing each edge
$e\in E(\Gamma_{\calX_s})$ a number  of times equal to $(w_{n_e}-1)$.
In particular, we have a canonical isomorphism
$H_1(\Gamma_{\calX_s},\bbZ)\cong H_1(\Gamma_{\calY_s},\bbZ)$. Moreover, the pull-back map induces a canonical
isomorphism $J(\calX_s)\stackrel{\cong}{\to} J(\calY_s)$ between the generalized Jacobians of $\calX_s$ and of $\calY_s$.

According to \cite[Sec. 9.3, Thm. 7]{BLR}, there exists a scheme $J(\calY)$ smooth and separated over $ \Spec R$, called the \emph{relative Jacobian} of the family $\calY\to \Spec R$,
such that its generic fiber  $J(\calY)_{\eta}$ is isomorphic to the Jacobian $J(\calY_{\eta})=J(X)$ of the generic fiber and its special fiber $J(\calY)_s$ is isomorphic to
the generalized Jacobian $J(\calY_s)$ of the special fiber. Moreover, a well-known result
of Raynaud (see \cite[Sec. 9.5, Thm. 4]{BLR}) says that, since $\calY$ is regular, the relative Jacobian $J(\calY)$ of $\calY\to \Spec R$ is isomorphic to the neutral component
$\calN(J(X))^o$ of the N\'eron model of the Jacobian $J(X)$ of $X\cong \calY_{\eta}$. In particular, since $J(\calY)$ is a semiabelian scheme over $\Spec R$, the Jacobian $J(X)$ of $X$
has semiabelian reduction over $K$. Note that the lattice of characters of the maximal subtorus of $J(\calY)_s$ is canonically isomorphic to
$H_1(\Gamma_{\calY_s},\bbZ)\cong H_1(\Gamma_{\calX_s},\bbZ)$.

Now the Picard-Lefschetz formula (see \cite[Expos\'e IX, Thm. 12.5]{SGA7I}) says that the monodromy pairing $Q_{\xi}$ on $H_1(\Gamma_{\calY_s},\bbR)$ associated to the
principal polarization $[\TXg]$ on $J(X)$ induced by the theta divisor $\Theta_X\subset \Pic^{g-1}(X)\cong J(X)$ (see Fact \ref{F:monpair}) is equal to
\begin{equation}\label{E:quad-Y}
Q_{[\TXg]}\left(\sum_{e\in E(\Gamma_{\calY_s})}\alpha_e \cdot e\right)=\sum_{e\in E(\Gamma_{\calY_s})}  \alpha_e^2
\end{equation}
Using the canonical isomorphism $H_1(\calY_s,\bbZ)\cong H_1(\calX_s,\bbZ)$, it is immediate to check that the above monodromy pairing $Q_{\xi}$ on  $H_1(\calY_s,\bbZ)$ becomes
isomorphic to the quadratic form $Q_{\trop(X)}$ on $H_1(\Gamma_{\calX_s},\bbR)$ defined by \eqref{D:def-quad}. By comparing Lemma-Definition \ref{D:tropmap-abvar} with Definition
\ref{D:Jac}, we see that $\trop(J(X),[\TXg])=\Jac(\trop(X))$, which shows the commutativity of the diagram \eqref{E:comdiag2}.

In the general case (when $X$ does not have a stable reduction over $K$), we can find a finite field extension
$K\subseteq K'$ with valuation ring $R'$ such that  the base change $X_{K'}$ of $X$ to $K'$ admits a stable reduction $\calX'\to \Spec R'$.
We can repeat the above argument working with the family $\calX'\to \Spec R'$ with the following two modifications: in defining the length  of the tropical curve
$\trop(X)$ we have to divide the right hand side of \eqref{E:def-leng} by $[K':K]$ and in defining the quadratic form giving $\trop(X,[\Theta_X])$ we have to divide the
monodromy pairing \eqref{E:quad-Y} by $[K':K]$. Clearly, with these two modifications, the equality $\trop(J(X),[\TXg])=\Jac(\trop(X))$ continues to hold,  and
the  commutativity of the diagram \eqref{E:comdiag2} in the general case follows.

\end{proof}

\begin{nota}{\emph{The fibers of the tropical Torelli map $\tgt$}}\label{S:fibers-tgt}

The aim of this subsection is to recall the description of the fibers of $\tgt$ obtained in \cite{CV1}.

A first step is to describe the strata $\Mgt(\Gamma,w)$ of $\Mgt$ that are mapped to the same stratum $\Agt([\Delta])$ of $\Agt$.
To this aim, we recall the following classical definition, due to Whitney.

\begin{defi}[Whitney]\label{D:cyc-iso}
Two graphs $\Gamma_1$ and $\Gamma_2$ are said to be \emph{cyclically equivalent} (or $2$-isomorphic) ,
and we write $\Gamma_1\equiv_{\cyc} \Gamma_2$,
if there exists a bijection $\phi:E(\Gamma_1)\to E(\Gamma_2)$
inducing a bijection between cycles of $\Gamma_1$ and cycles of $\Gamma_2$.
We denote by $[\Gamma]_{\cyc}$ the cyclic isomorphism class
of a graph $\Gamma$.
\end{defi}


In the sequel, graphs with  edge-connectivity at least $3$ will play an important role. Here it is the standard definition.

\begin{defi}\label{D:3conn}
Let $\Gamma$ be a connected graph.
\begin{enumerate}[(i)]
\item An edge $e$ of $\Gamma$ is called a \emph{separating edge} (or a coloop or a bridge) if the graph obtained by removing $e$ is disconnected.
Two edges $e$ and $f$ are said to be \emph{coparallel}Ê if neither of them is a separating edge and the graph obtained by removing $e$ and $f$ is disconnected.

 \item
 $\Gamma$ is said to be \emph{$3$-edge-connected} if $\Gamma$ does not have separating edges nor pairs of coparallel edges.
 \end{enumerate}
 \end{defi}
It is easy to see that the property of being coparallel defines an equivalence relation on the set of non separating edges of $\Gamma$.
The equivalence classes with respect to this equivalence relation are called \emph{coparallel classes}
\footnote{These equivalence classes were called C1-sets in \cite{CV1}, with a terminology coming from algebraic geometry (see \cite{CV2}). Here we choose to use
the more graph-theoretic terminology of coparallel in order to suggest that the coparallel equivalence relation is the dual notion (in the sense of matroid theory)
of the parallel equivalence relation.}.

There is a canonical way of obtaining
a $3$-edge-connected graph, up to cyclic isomorphism, starting from any graph.

 \begin{defi}\label{D:connecti}
 Let $\Gamma$ be a connected graph.

A {\it 3-edge-connectivization} of $\Gamma$ is a graph, denoted by $\Gamma^{3}$,
obtained from $\Gamma$ by contracting all the separating edges
and all but one among the edges of each coparallel class of $\Gamma$.
The cyclic isomorphism class of $\Gamma^3$ (which is well-defined and it does not depend on the choice of $\Gamma^3$)
is called the $3$-edge-connectivization class of $\Gamma$ and is denoted by $[\Gamma^3]_{\cyc}$.
\end{defi}

After these preliminary definitions, we can now recall the following result (proved in \cite[Sec. 3.2]{CV1}) which characterize the stable weighted graphs that
have the same associated Delaunay decomposition.

\begin{fact}[Caporaso-Viviani]\label{F:same-Del}
Let $(\Gamma_1,w_1)$ and $(\Gamma_2,w_2)$ two stable weighted graphs of genus $g$. Then
$$[\Del(\Gamma_1,w_1)]=[\Del(\Gamma_2,w_2)]\Leftrightarrow [\Gamma_1^3]_{\cyc}=[\Gamma_2^3]_{\cyc}.$$
\end{fact}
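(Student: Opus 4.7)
My plan is to prove both implications by relating the Delaunay decomposition of $Q_{(\Gamma,w)}$ to the hyperplane arrangement $\mathcal{A}(\Gamma) := \{e^* = n\}_{e \in E(\Gamma),\, n \in \bbZ}$ in $H_1(\Gamma, \bbR) \oplus \bbR^{|w|}$, exploiting the classical fact recalled in the remark after Definition \ref{D:graphs-Del}: the Delaunay cells of $Q_{(\Gamma,w)}$ are cut out by this arrangement. Since $Q_{(\Gamma,w)}$ vanishes identically on $\bbR^{|w|}$, the $\GL_g(\bbZ)$-equivalence class $[\Del(\Gamma,w)]$ on $\bbR^g$ is determined by the $\GL_{b_1(\Gamma)}(\bbZ)$-equivalence class of $\mathcal{A}(\Gamma)$ inside $H_1(\Gamma,\bbR)$, which reduces the problem to a purely combinatorial statement about hyperplane arrangements attached to graphs.

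For the implication $(\Leftarrow)$, I would first show that each elementary move in the construction of $\Gamma^3$ preserves the arrangement $\mathcal{A}(\Gamma)$, hence the Delaunay decomposition. For the contraction of a separating edge $e$, the functional $e^*$ is identically zero on $H_1(\Gamma,\bbR)$, so $e$ contributes no hyperplane to $\mathcal{A}(\Gamma)$ and removing it leaves the arrangement untouched, while the weight redistribution preserves the total dimension. For the contraction of one edge $f$ from a coparallel pair $\{e,f\}$, the cut characterization forces $e^* = \pm f^*$ on $H_1(\Gamma,\bbZ)$, so the affine families $\{e^*=n\}_n$ and $\{f^*=n\}_n$ coincide, and deleting the $f$-copy does not alter $\mathcal{A}(\Gamma)$. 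Iterating these reductions yields $[\Del(\Gamma,w)] = [\Del(\Gamma^3, w^3)]$ for a suitably propagated weight $w^3$. Finally, any cyclic equivalence $\Gamma_1^3 \equiv_{\cyc} \Gamma_2^3$ induces a signed bijection of edge functionals and hence a $\GL_{b_1}(\bbZ)$-isomorphism between $\mathcal{A}(\Gamma_1^3)$ and $\mathcal{A}(\Gamma_2^3)$, producing the desired equality of Delaunay classes.

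For the implication $(\Rightarrow)$, by the previous direction I may replace each $(\Gamma_i,w_i)$ by its $3$-edge-connectivization and so assume both graphs are $3$-edge-connected. The plan is then to recover $[\Gamma^3]_{\cyc}$ from $\mathcal{A}(\Gamma^3)$. In a $3$-edge-connected graph every edge lies in some simple cycle, so each $e^*$ is primitive in $H_1(\Gamma^3,\bbZ)^*$; moreover the absence of coparallel pairs guarantees that distinct edges give non-proportional functionals. Hence $\mathcal{A}(\Gamma^3)$ consists of exactly $|E(\Gamma^3)|$ distinct hyperplanes, from which one reads off the vector configuration $\{e^*\} \subset H_1(\Gamma^3,\bbZ)^*$ up to sign and permutation. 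This configuration is a realization of the cographic matroid $M^*(\Gamma^3)$, which by matroid duality determines the graphic matroid $M(\Gamma^3)$; by Whitney's $2$-isomorphism theorem, $M(\Gamma^3)$ recovers the cyclic equivalence class $[\Gamma^3]_{\cyc}$, completing the argument.

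The main obstacle I anticipate lies in step (b) of the first direction: showing rigorously that the rank-one change $Q_{\Gamma}-Q_{\Gamma/f}=(e^*)^2$ does not affect the Delaunay decomposition, even though it strictly alters the quadratic form. This reduces to a direct nearest-lattice-point verification based on the arrangement description of $\Del(Q_{(\Gamma,w)})$: since the family of hyperplanes $\{e^*=n\}_n$ is already part of the defining arrangement, the additional positive rescaling along the $e^*$-direction introduces no new walls and collapses no old ones, so the decomposition is unchanged. Making this rigorous, together with the identification of $\Del(Q_{(\Gamma,w)})$ with the graphical hyperplane arrangement, is the technical heart of the proof.
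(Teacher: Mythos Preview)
The paper does not supply its own proof of this Fact; it merely records it as a result ``proved in \cite[Sec.~3.2]{CV1}''. Your outline is correct and is essentially the argument carried out in \cite{CV1}: one identifies $\Del(Q_{(\Gamma,w)})$ with the graphical (cographic) hyperplane arrangement $\{e^*=n\}$, observes that separating edges contribute the zero functional and coparallel edges contribute proportional functionals (hence the same family of hyperplanes), and for the converse recovers the cographic matroid $M^*(\Gamma^3)$ from the arrangement and invokes Whitney's $2$-isomorphism theorem. This is precisely the toolkit the present paper alludes to (see the Remark after Definition~\ref{D:graphs-Del} and the use of Whitney's theorem in Corollary~\ref{C:inj-tgt}).

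One comment on your write-up: the ``main obstacle'' you single out is a red herring. Once you accept the identification of $\Del(Q_{(\Gamma,w)})$ with the hyperplane arrangement $\mathcal{A}(\Gamma)$ (which is the content of the Remark), the invariance under contracting one edge of a coparallel pair is immediate at the level of arrangements --- you are deleting a duplicate hyperplane, nothing more. There is no need to track how the quadratic form changes or to run a nearest-lattice-point verification; the quadratic form is only a device to produce the arrangement, and different forms in the same ``secondary cone'' yield the same $\Del$. The genuine technical input is the Remark itself (i.e.\ that for graphical forms the Delaunay decomposition coincides with the obvious arrangement), and you are right to treat it as given. In the converse direction, you should also note explicitly that in a $3$-edge-connected graph each $e^*$ is a primitive nonzero functional (because some simple cycle traverses $e$ exactly once) and no two are proportional, so every hyperplane $\{e^*=n\}$ is an actual wall of the decomposition and can be read off from it; this is what guarantees that $|E(\Gamma^3)|$ and the matroid $M^*(\Gamma^3)$ are recoverable from $[\Del(\Gamma^3,w^3)]$.
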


\vspace{0,2cm}

We turn now to the following natural question:  for which tropical curves $C, C'\in \Mgt$ it holds that $\tgt(C)=\tgt(C')$?
We first need a couple of definitions.


\begin{defi}\label{D:cyc-isomo-trop}
Two tropical curves $C=(\Gamma, w, l)$ and $C'=(\Gamma',w',l')$
are cyclic isomorphic, and we write $C\equiv_{\cyc} C'$, if there exists a bijection
$\phi:E(\Gamma)\to E(\Gamma')$, commuting with the length functions $l$ and
$l'$, that induces a cyclic isomorphism between $\Gamma$ and $\Gamma'$.
We denote by $[C]_{\cyc}$ the cyclic isomorphism equivalence class of a tropical
curve $C$.
\end{defi}

Similarly to definition \ref{D:3conn}, we have the following

\begin{lemmadefi}\label{3-conn-trop}
Let $C=(\Gamma,l,w)$ a tropical curve. A $3$-edge-connecti\-vization
of $C$ is a tropical curve $C^3=(\Gamma^3,l^3,w^3)$ obtained
in the following manner:
\begin{enumerate}[(i)]
\item $\Gamma^3$ is a $3$-edge-connectivization of $\Gamma$
in the sense of Definition \ref{D:connecti}, i.e. $\Gamma^3$ is obtained
from $\Gamma$ by contracting all the separating edges of $\Gamma$
and, for each coparallel class $S$ of $\Gamma$, all but one the edges of $S$,
which we denote by $e_S$;
\item $w^3$ is the weight function on $\Gamma^3$ induced by the weight
function $w$ on $\Gamma$ in the following way: at each contraction of some edge of $\Gamma$, the new vertex has weight with respect to $w^3$ equal to
the sum of the weights with respect to $w$ of the two vertices mapping to it;
\item $l^3$ is the length function on $\Gamma^3$ given
by $$l^3(e_S)=\sum_{e\in S} l(e),$$
for each coparallel class $S$ of $\Gamma$.
\end{enumerate}
The cyclic isomorphism class of $C^3$ is well-defined;
it will be called the $3$-edge-connectivization class of $C$
and denoted by $[C^3]_{\cyc}$.
\end{lemmadefi}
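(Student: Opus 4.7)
The content of this lemma-definition is the well-definedness of the cyclic isomorphism class $[C^3]_{\cyc}$. I first observe that the only genuine freedom in the construction lies in choosing, for each coparallel class $S \subset E(\Gamma)$, the representative edge $e_S \in S$ to be retained: once these choices are fixed, the set of contracted edges (which also includes all separating edges of $\Gamma$), the induced weight function $w^3$, and the length function $l^3$ are completely determined. My strategy would therefore be to fix two such systems of representatives, giving tropical curves $C^3_1 = (\Gamma^3_1, w^3_1, l^3_1)$ and $C^3_2 = (\Gamma^3_2, w^3_2, l^3_2)$, and to exhibit an explicit cyclic isomorphism $\phi : C^3_1 \to C^3_2$.

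The natural candidate for $\phi$ is the bijection on edges that sends the representative $e_{S,1}$ kept in the first construction to the representative $e_{S,2}$ kept in the second, for every coparallel class $S$ of $\Gamma$. It is well defined and bijective because every edge of either $\Gamma^3_i$ is the retained representative of exactly one coparallel class of $\Gamma$ (a singleton class contributes an edge of $\Gamma$ that is itself uncontracted). Granted this, two verifications remain. The first is that $\phi$ is a cyclic isomorphism of the underlying graphs $\Gamma^3_1$ and $\Gamma^3_2$; this is precisely the well-definedness statement already embedded in Definition \ref{D:connecti}, and it is a classical result of Whitney's $2$-isomorphism theory. One reduces it, by induction on the total size of the coparallel classes, to the elementary case of contracting either of two coparallel edges, where it follows from the fact that the cocycle matroid of $\Gamma$ is insensitive to permutations within a parallel class. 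The second verification is immediate: the equality
\[
l^3_i(e_{S,i}) = \sum_{e \in S} l(e)
\]
shows that $l^3_i(e_{S,i})$ depends only on $S$ and not on the chosen representative, so $l^3_2 \circ \phi = l^3_1$.

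The hard part will be the graph-theoretic step, namely establishing the cyclic equivalence $\Gamma^3_1 \equiv_{\cyc} \Gamma^3_2$ via the bijection $\phi$. Once this is granted, as is implicit in Definition \ref{D:connecti} and well known from Whitney's work, the rest is a quick bookkeeping check. Notably, no verification is needed on the weight functions $w^3_i$, since the cyclic isomorphism relation on tropical curves in Definition \ref{D:cyc-isomo-trop} ignores the weight data altogether.
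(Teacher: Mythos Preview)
Your proposal is correct and follows exactly the natural line of argument. The paper itself does not supply a proof for this Lemma--Definition; the well-definedness of $[\Gamma^3]_{\cyc}$ is already asserted (without proof) in Definition~\ref{D:connecti}, and the passage to tropical curves is treated as evident. Your write-up makes explicit the two points that need checking---the graph-theoretic cyclic equivalence (deferred to Whitney's theory, as in the paper) and the trivial compatibility of the length functions---and you correctly observe that the weight data plays no role in Definition~\ref{D:cyc-isomo-trop}, so nothing further is required there.
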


The following result was proved by Caporaso-Viviani in \cite[Thm. 4.1.9]{CV1} in the case when the total weights of the tropical curves are zero
and then the proof was easily adapted to the general case by Brannetti-Melo-Viviani in \cite[Thm. 5.3.3]{BMV}.

\begin{fact}[Caporaso-Viviani]\label{F:tropTorfibers}
Let $C_1, C_2\in \Mgt$. Then
$$\tgt(C_1)=\tgt(C_2)\Leftrightarrow [C_1^3]_{\cyc}=[C_2^3]_{\cyc}.$$
\end{fact}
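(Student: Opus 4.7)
The argument splits into the two implications, the reverse one being significantly more delicate.

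For $(\Leftarrow)$, the plan is to reduce to the case $C_i=C_i^3$ and then observe that a cyclic isomorphism of tropical curves tautologically induces an isomorphism of Jacobians (since by definition such an isomorphism is a length-preserving edge bijection carrying $H_1$ onto $H_1$). The reduction rests on the equality $\Jac(C)=\Jac(C^3)$, which I would verify by analysing the two operations occurring in the $3$-edge-connectivization. Contracting a separating edge $e$ of $\Gamma$ changes neither $H_1(\Gamma,\bbZ)$ (since $e$ lies on no cycle) nor $|w|$ nor the quadratic form $Q_C$ (the coefficient of $e$ on any cycle being zero). For a coparallel class $S\subseteq E(\Gamma)$, the key combinatorial fact is that $S$ is a series class in the cycle matroid of $\Gamma$: every element of $H_1(\Gamma,\bbZ)$ has a common signed coefficient on the edges of $S$. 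Hence, after fixing coherent orientations, replacing $S$ by a single edge $e_S$ of length $\sum_{e\in S}l(e)$ preserves the quadratic form on $H_1$; loop contractions arising during the process convert $b_1$-contributions into weight-contributions, leaving the total genus and the ambient real torus unchanged.

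For $(\Rightarrow)$, one first invokes Fact \ref{F:tropTor-strata}, which places $\Jac(C_i)$ in the stratum $\Agt([\Del(\Gamma_i,w_i)])$. The hypothesis $\tgt(C_1)=\tgt(C_2)$ therefore forces $[\Del(\Gamma_1,w_1)]=[\Del(\Gamma_2,w_2)]$, and Fact \ref{F:same-Del} then supplies a cyclic isomorphism $\phi:\Gamma_1^3\stackrel{\sim}{\to} \Gamma_2^3$ of the underlying $3$-edge-connectivizations. It remains to show that $\phi$, after possibly being composed with a cyclic automorphism of $\Gamma_2^3$, intertwines the length functions $l_1^3$ and $l_2^3$.

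The hard part is exactly this last step, a Torelli-type rigidity assertion: for a $3$-edge-connected weighted graph $\Gamma^3$, the length function $l^3$ must be recoverable from the pair $(H_1(\Gamma^3,\bbZ), Q_{(\Gamma^3,l^3)})$ up to cyclic automorphism of $\Gamma^3$. My strategy is to extract each $l^3(e)$ combinatorially from $Q$: in a $3$-edge-connected graph every edge lies in enough independent cycles that $l^3(e)$ can be expressed as an explicit rational linear combination of values $Q(\gamma)$ for carefully chosen cycles $\gamma$ detecting $e$. Combining this reconstruction with the cyclic isomorphism of the underlying graphs yields $C_1^3\equiv_{\cyc} C_2^3$, i.e.\ $[C_1^3]_{\cyc}=[C_2^3]_{\cyc}$, completing the proof.
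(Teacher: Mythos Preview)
The paper does not reproduce a proof of this statement: it is recorded as a \emph{Fact} and attributed to \cite[Thm.~4.1.9]{CV1} (zero total weight) and \cite[Thm.~5.3.3]{BMV} (general case). So there is no in-paper argument to compare your proposal against line by line; what I can do is assess whether your outline constitutes a proof.

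Your $(\Leftarrow)$ direction is essentially fine. The reduction $\Jac(C)\cong\Jac(C^3)$ via invariance of $H_1$ under separating-edge contraction and the series-class observation for coparallel edges is correct. (Your remark about ``loop contractions converting $b_1$-contributions into weight-contributions'' is a red herring: in the $3$-edge-connectivization as defined in Lemma-Definition~\ref{3-conn-trop} one contracts only non-loop edges of the original graph, so $b_1$ and $|w|$ are each separately preserved.)

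The $(\Rightarrow)$ direction has a genuine gap, and it is precisely at the point you flag as ``the hard part''. Your proposed strategy---express $l^3(e)$ as a rational linear combination of values $Q(\gamma)$ for well-chosen cycles $\gamma$---presupposes that you already know how to transport cycles of $\Gamma_1^3$ to cycles of $\Gamma_2^3$ in a way compatible with the given isometry of Jacobians. But the hypothesis $\Jac(C_1)\cong\Jac(C_2)$ furnishes only a lattice isometry $h:(H_1(\Gamma_1^3,\bbZ),Q_1)\to(H_1(\Gamma_2^3,\bbZ),Q_2)$; it is \emph{not} given that $h$ is induced by any cyclic isomorphism of the underlying graphs. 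The cyclic isomorphism $\phi$ you extract from Fact~\ref{F:same-Del} comes from the equality of Delaunay decompositions, and a priori has nothing to do with $h$. So when you evaluate your linear combination on the $\Gamma_2^3$ side, you have no control over what cycles $h(\gamma)$ are in graph-theoretic terms, and the argument stalls. The actual content of the theorem in \cite{CV1} is exactly that every such lattice isometry $h$ \emph{is} induced by a cyclic isomorphism (equivalently, that the pair $(H_1(\Gamma^3,\bbZ),Q_{(\Gamma^3,l^3)})$ determines $l^3$ up to cyclic automorphism). Proving this requires identifying the edge-functionals $e^*$ intrinsically from the Delaunay decomposition of $Q$, which is the substantive step carried out in \cite{CV1}; your sketch does not supply it.
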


The previous Fact allows us to describe a locally closed subset of $\Mgt$ where the tropical Torelli map $\tgt$ is injective.

Recall that a connected graph $\Gamma$ is said to be \emph{$3$-vertex-connected} if, for any pair $\{v_1,v_2\}$ of
(possibly equal) vertices of $\Gamma$,  the graph $\Gamma\setminus \{v_1,v_2\}$ obtained from $\Gamma$ by removing
$v_1$, $v_2$ together with all the edges that are adjacent to them is connected. It is easily seen that a
$3$-vertex-connected graph is also $3$-edge-connected in the sense of Definition \ref{D:3conn}.

\begin{cor}\label{C:inj-tgt}
The tropical Torelli map $\tgt:\Mgt\to \Agt$ is injective on the locally closed subset $F$ of $\Mgt$ consisting of tropical curves
$C$ whose combinatorial type $(\Gamma,w)$  is such that $\Gamma$ is $3$-vertex-connected and $g(\Gamma)=g$.
\end{cor}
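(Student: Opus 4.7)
The plan is to combine Fact \ref{F:tropTorfibers} with the classical theorem of Whitney stating that $2$-isomorphic $3$-vertex-connected graphs are isomorphic. Let me spell out the steps.

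Suppose $C_1, C_2 \in F$ satisfy $\tgt(C_1) = \tgt(C_2)$, and write $C_i = (\Gamma_i, w_i, l_i)$ for $i = 1, 2$. First I would observe that since $g(\Gamma_i, w_i) = g(\Gamma_i) + |w_i| = g$ and by hypothesis $g(\Gamma_i) = g$, we must have $|w_i| = 0$, i.e.\ $w_i \equiv 0$. Next, I would note that $3$-vertex-connectivity of $\Gamma_i$ implies $3$-edge-connectivity in the sense of Definition \ref{D:3conn}: a separating edge or a pair of coparallel edges in a graph with at least two vertices would yield a pair of vertices whose removal disconnects the graph (the endpoints of the separating edge, or the common endpoints of the coparallel pair), contradicting $3$-vertex-connectivity. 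Hence $\Gamma_i$ has no separating edges and no coparallel pairs, so a $3$-edge-connectivization $\Gamma_i^3$ can be taken to equal $\Gamma_i$, and consequently $C_i^3 \equiv_{\cyc} C_i$.

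By Fact \ref{F:tropTorfibers}, the equality $\tgt(C_1) = \tgt(C_2)$ gives $[C_1^3]_{\cyc} = [C_2^3]_{\cyc}$, which by the previous step amounts to $[C_1]_{\cyc} = [C_2]_{\cyc}$. Unfolding Definition \ref{D:cyc-isomo-trop}, this yields a bijection $\phi : E(\Gamma_1) \to E(\Gamma_2)$ that commutes with the length functions and induces a cyclic equivalence (i.e.\ $2$-isomorphism) between $\Gamma_1$ and $\Gamma_2$.

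The key final step invokes Whitney's $2$-isomorphism theorem: if $\Gamma_1$ and $\Gamma_2$ are $3$-vertex-connected graphs and $\phi : E(\Gamma_1) \to E(\Gamma_2)$ is a $2$-isomorphism, then $\phi$ is induced by an isomorphism of graphs $\sigma : \Gamma_1 \to \Gamma_2$. Since $\phi$ also commutes with the length functions and both weight functions are identically zero, the pair $(\sigma, \phi)$ is an isomorphism of weighted metric graphs, so $C_1$ and $C_2$ define the same point of $\Mgt$ under the identification \eqref{E:strataMgt}. This proves injectivity.

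The main (and really the only) subtle point is the reduction to Whitney's theorem: one must carefully check that $3$-vertex-connectivity in the graph-theoretic sense implies the hypothesis on separating edges and coparallel pairs used in Definition \ref{D:connecti}, and that under the vanishing weight assumption a cyclic isomorphism of tropical curves upgrades to an honest isomorphism. Both verifications are short and use no more than the definitions.
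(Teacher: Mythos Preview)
Your proof is correct and follows essentially the same route as the paper: both invoke Fact \ref{F:tropTorfibers} together with Whitney's theorem that a cyclic equivalence between $3$-vertex-connected graphs is induced by a graph isomorphism, with your version simply spelling out the intermediate reductions (vanishing weights, $C_i^3=C_i$) that the paper compresses into a single line. One small wording issue: coparallel edges need not share endpoints, so your parenthetical ``common endpoints of the coparallel pair'' is not quite accurate, but the underlying implication ($3$-vertex-connected $\Rightarrow$ $3$-edge-connected) is the standard inequality $\kappa(\Gamma)\le\lambda(\Gamma)$ and is already noted in the paper just before the Corollary, so the step stands.
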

\begin{proof}
Since $F$ is the union of strata of $\Mgt$, it is clear that $F$ is locally closed.
Now it follows from a classical result of Whitney (see \cite[Thm. 2.2.4]{CV1} and the references therein) that if
$C_1, C_2\in F$ then
$$[C_1^ 3]_{\rm cyc}=[C_1]_{\rm cyc}=[C_2]_{\rm cyc}=[C_2^ 3]_{\rm cyc}\Leftrightarrow C_1=C_2,$$
which, together with Fact \ref{F:tropTorfibers}, finishes the proof.
\end{proof}

\end{nota}

\subsection{The compactified Torelli morphism}\label{S:comp-Tor}

The Torelli morphism $t_g:\MMg\to \AAg$ can be extended to a modular morphism $\tgb:\MMgb\to \AAgb$,  as shown by Alexeev in \cite{Ale2}. Before recalling his result,
we need the following definitions.

\begin{defi}
Let $X$ be a stable curve of arithmetic genus $g$ over $k$.
\begin{enumerate}[(i)]
\item The \emph{generalized Jacobian}Ê $J(X)$ of $X$ is the semiabelian variety parametrizing line bundles on $X$ of multidegree $0$, i.e. having degree $0$ on each
irreducible component of $X$.
\item The \emph{degree $g-1$ canonical compactified Jacobian} $\PXgb$ of $X$ is the moduli space of torsion-free, multirank $1$ (i.e. having rank $1$ on each
irreducible component of $X$) and degree $g-1$ sheaves $\calI$ on $X$ that are $\omega_X$-semistable.
\item  The \emph{theta divisor} $\TXg$ of $X$ is the closed reduced subscheme of $\PXgb$ defined by
$$\TXg:=\{\calI\in \PXgb \: : \: h^0(X, \calI)>0\}.$$
\end{enumerate}
\end{defi}

It is well known  that $J(X)$ is the extension of the Jacobian $J(\wt{X})$ of the normalization $\wt{X}$ of $X$ (which has dimension equal to the geometric genus of $X$) by a torus
whose lattice of characters can be naturally identified with the first homology group $H_1(\Gamma_X,\bbZ)$ of the dual graph $\Gamma_X$ of $X$.
 Note also that the generalized Jacobian $J(X)$ acts naturally on $\PXgb$ by tensor product.

The following result was proved by Alexeev in \cite{Ale2}.

\begin{fact}[Alexeev]\label{F:comp-tg}
\noindent
\begin{enumerate}[(i)]
\item \label{F:comp-tg1} For any stable curve $X$ of genus $g$, the triple $(J(X)\curvearrowright \PXgb, \TXg)$ is a p.p. stable semi-abelic pair of dimension $g$.
\item \label{F:comp-tg2} The Torelli morphism $t_g:\MMg\to \AAg$ extends to a morphism $\tgb:\MMgb\to \AAgb$, called the \emph{compactified Torelli morphism},
which sends a stable curve $X$ into the p.p. stable semi-abelic pair   $(J(X)\curvearrowright \PXgb, \TXg)$.
\item \label{F:comp-tg3} The compactified Torelli morphism $\tgb$ sends the stratum $\MMgb(\Gamma,w)\subset \MMgb(k)$ into the stratum
$\AAgb([\Del(\Gamma,w)])\subseteq \AAgb(k)$.
\end{enumerate}
\end{fact}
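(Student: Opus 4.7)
\medskip
\noindent\textbf{Proof proposal.} For part \eqref{F:comp-tg1}, the plan is to verify the four axioms of a p.p. stable semi-abelic pair one by one. The semiabelian variety $J(X)$ fits in an exact sequence $1\to T\to J(X)\to J(\wt{X})\to 0$ where $T$ is the torus with character lattice $H_1(\Gamma_X,\bbZ)$, so its dimension is $\dim J(\wt{X})+b_1(\Gamma_X)=g$. That $\PXgb$ is a seminormal, connected, projective variety of pure dimension $g$ should be deduced from the construction of the compactified Jacobian (following Oda--Seshadri, Caporaso and Esteves): one realizes $\PXgb$ as a GIT quotient or as a moduli space of $\omega_X$-semistable torsion-free rank-$1$ sheaves, and checks seminormality locally using the explicit \'etale-local description of torsion-free sheaves at the nodes. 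The $J(X)$-action by tensor product has as its orbits the loci of sheaves with a fixed ``combinatorial type'' (prescribing at which nodes the sheaf fails to be locally free and the multidegree on the partial normalization), and this combinatorial type ranges over a finite set; the stabilizer of a sheaf is the torus in $J(X)$ coming from the partial normalization at the non-locally-free nodes, hence is connected, reduced and contained in $T$. Finally, the theta divisor $\TXg$ is defined scheme-theoretically as the support of the determinant of cohomology; its ampleness follows from the standard fact that $\ma{O}(\TXg)$ is a polarization on each $J(X)$-orbit closure, and the equality $h^0(\PXgb,\ma{O}(\TXg))=1$ reduces to the classical theta divisor statement for smooth curves by degeneration.

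For part \eqref{F:comp-tg2}, the approach is to work in families. Given a family $\calX\to S$ of stable curves, one constructs the relative compactified Jacobian $\ov{P^{g-1}_{\calX/S}}\to S$ and a relative theta divisor $\TXg_{\calX/S}$. Showing that this is a family of p.p. stable semi-abelic pairs in the sense of Alexeev amounts to verifying: flatness of the relative compactified Jacobian, compatibility of formation of the theta divisor with base change, and that the fibers are precisely the pairs constructed in (i). The resulting $S$-point of $\AAgmod$ lies in the main component $\AAgb$ because it specializes from the generic locus $\MMg$ where the pair is a p.p. abelian variety (hence lies in $\AAg\subset \AAgb$), and $\AAgb$ is closed. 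This yields the morphism $\tgb:\MMgb\to \AAgb$, and its restriction to $\MMg$ coincides with $t_g$ by the classical identification $\PXgb\cong J(X)$ when $X$ is smooth (via the choice of a theta characteristic or a translation).

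For part \eqref{F:comp-tg3}, the idea is to match the two combinatorial invariants. To a stable curve $X$ with dual weighted graph $(\Gamma_X,w_X)$, the stratification of $\PXgb$ by $J(X)$-orbits is indexed by the equivalence classes of multidegrees on partial normalizations, and this combinatorial data depends only on $(\Gamma_X,w_X)$. On the other hand, by the general theory recalled in Fact \ref{F:paving-Del}, the associated $\bbZ^g$-periodic paving of the pair $(J(X)\curvearrowright \PXgb,\TXg)$ encodes precisely this orbit stratification. One checks that the resulting paving coincides with the Delaunay decomposition $[\Del(\Gamma_X,w_X)]$ of Definition \ref{D:graphs-Del}, using that the quadratic form on $H_1(\Gamma_X,\bbR)$ computing the polarization of $J(X)$ is exactly $Q_{(\Gamma_X,w_X)}$ (with all edge lengths equal to $1$, which is what matters for determining the Delaunay class up to $\GL_g(\bbZ)$-equivalence). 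The main obstacle in this program is the explicit description of the theta divisor $\TXg$ for arbitrary stable curves --- in particular proving ampleness of $\ma{O}(\TXg)$ and the equality $h^0=1$, which requires a delicate combinatorial analysis of the $\omega_X$-semistable multidegrees --- and, for (iii), exhibiting the precise dictionary between $J(X)$-orbits on $\PXgb$ and Delaunay cells of $[\Del(\Gamma_X,w_X)]$.
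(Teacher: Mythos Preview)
The paper does not prove this statement at all: it is stated as a \emph{Fact} attributed to Alexeev, with the reference \cite{Ale2} (and implicitly \cite{Ale1}) serving in place of a proof. So there is no ``paper's own proof'' to compare against; the appropriate response in this survey context would simply be to cite Alexeev's papers.

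That said, your sketch is a fair summary of how the argument actually goes in \cite{Ale2}. A few remarks on accuracy: for part \eqref{F:comp-tg1}, Alexeev's proof of seminormality and of the orbit structure of $\PXgb$ does not proceed via GIT or via the \'etale-local description of torsion-free sheaves directly, but rather by identifying $\PXgb$ with an explicit stable semi-abelic variety built from the Delaunay decomposition of $\Gamma_X$ (this is the content of \cite[Thm.~5.1]{Ale2}); the properties (seminormality, orbit structure, $h^0(\calO(\TXg))=1$) then follow from the general structure theory of such varieties developed in \cite{Ale1}. For part \eqref{F:comp-tg3}, your idea is right but the logic is slightly inverted: one does not compute the paving from the orbit structure and then recognize it as Delaunay; rather, the identification of $\PXgb$ with the model stable semi-abelic variety associated to $\Del(\Gamma_X,w_X)$ is what establishes both \eqref{F:comp-tg1} and \eqref{F:comp-tg3} simultaneously. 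Your flagged ``main obstacle'' (ampleness and $h^0=1$ for the theta divisor) is indeed nontrivial, but it is handled once and for all at the level of abstract stable semi-abelic pairs in \cite{Ale1}, not curve-by-curve.
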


Using the above result, we can now easily prove the first half of Theorem \ref{T:mainthm1}.

\begin{thm}\label{T:commuta1}
The following diagram is commutative
\begin{equation}\label{E:comdiag1}
\xymatrix{
\MMgb(k)\ar[d]^{\tgb} &  \MMg(K) \ar[d]^{t_g} \ar[l]_{\rm red} \\
\AAgb(k)  & \AAg(K)  \ar[l]_{\rm red} \\
}\end{equation}
\end{thm}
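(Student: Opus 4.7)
The plan is to reduce the statement to the uniqueness of the stable semi-abelic reduction (i.e.\ to the valuative criterion of properness for $\AAgb$), combined with the modular description of $\tgb$ provided by Fact \ref{F:comp-tg}.

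First, given $X \in \MMg(K)$, I would choose a finite field extension $K\subseteq K'$ with valuation ring $R'$ such that $X_{K'}:=X\times_K K'$ admits a stable reduction $\calX'\to \Spec R'$ in the sense of Lemma-Definition \ref{D:red-curves}. This amounts to a morphism $\phi:\Spec R'\to \MMgb$ whose generic fiber represents $X_{K'}\in \MMg(K')$ and whose special fiber represents $\red(X)=\calX'_s\in \MMgb(k)$.

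Next, I would compose with $\tgb:\MMgb\to \AAgb$ to produce a morphism $\tgb\circ\phi:\Spec R'\to \AAgb$, i.e.\ a family of p.p. stable semi-abelic pairs over $\Spec R'$. Since $\tgb$ extends the classical Torelli morphism on the open substack $\MMg\subset \MMgb$, the generic fiber of $\tgb\circ\phi$ is $t_g(X_{K'})=(J(X_{K'}),[\Theta_{X_{K'}}])$, while by Fact \ref{F:comp-tg}\eqref{F:comp-tg2} its special fiber is $\tgb(\red(X))=(J(\calX'_s)\curvearrowright \overline{P^{g-1}_{\calX'_s}},\Theta_{\calX'_s})$.

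Finally, $\tgb\circ\phi$ is thus a family of p.p. stable semi-abelic pairs over $\Spec R'$ whose generic fiber is $t_g(X_{K'})$; by the uniqueness portion of the valuative criterion of properness for $\AAgb$---exactly the uniqueness underlying Lemma-Definition \ref{D:red-abvar}---it must coincide with the stable semi-abelic reduction of $t_g(X)$ with respect to $K\subseteq K'$. Comparing special fibers yields $\tgb(\red(X))=\red(t_g(X))$, which is the desired commutativity of \eqref{E:comdiag1}. The only subtle point, which I do not expect to be a real obstacle, is that Lemma-Definitions \ref{D:red-curves} and \ref{D:red-abvar} may a priori require different finite extensions of $K$; this is handled in the standard way by passing to a common further finite extension, exactly as in the independence arguments used there.
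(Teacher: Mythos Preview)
Your proposal is correct and follows essentially the same approach as the paper: take a stable reduction $\phi:\Spec R'\to \MMgb$ of $X$, compose with $\tgb$ to obtain an $R'$-family in $\AAgb$ extending $t_g(X_{K'})$, and read off the special fiber. Your final caveat is in fact unnecessary: since $\tgb\circ\phi$ is already a morphism $\Spec R'\to \AAgb$ extending $t_g\circ f$ over the \emph{same} extension $K\subseteq K'$, it is by definition the stable semi-abelic reduction of $t_g(X)$ with respect to that extension, and no passage to a further common extension is needed.
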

\begin{proof}
Consider an element of $\MMg(K)$, i.e. a morphism $f:\Spec K\to \MMg\subset \MMgb$. By applying the valuative criterion of properness  to the stack
$\MMgb$, we get that, up to a finite extension
$K\subseteq K'$ with valuation ring $R'$, we can extend the morphism $f$ to a morphism $\phi:\Spec R'\to \MMgb$. In this way we get a  commutative diagram
\begin{equation*}
\xymatrix{
& & & \Spec R' \ar[llld]_{\phi} &\Spec k \ar@{_{(}->}[l]_s\\
\MMgb\ar[d]^{\tgb} &  \MMg \ar[d]^{t_g} \ar@{_{(}->}[l] & \Spec K \ar[l]_f & \Spec K'\ar[l] \ar@{^{(}->}[u]^{\eta} &  \\
\AAgb  & \AAg  \ar@{_{(}->}[l] & &  & \\
}\end{equation*}
where the upper triangle is commutative by construction and the bottom left square is commutative by Fact \ref{F:comp-tg}.
As explained in \ref{N:redmap1}, the first reduction map $\red:\MMg(K)\to \MMgb(k)$ sends
the morphism $f:\Spec K\to \MMg$ into the morphism $\phi\circ s:\Spec k\to \MMgb$.   Analogously, as explained in \ref{N:redmap2}, the second reduction map
$\red:\AAg(K)\to \AAgb(k)$ sends the morphism  $t_g\circ f:\Spec K\to \AAg$ into the morphism $\tgb\circ \phi \circ s:\Spec k\to \AAgb$.
The commutativity of the diagram \eqref{E:comdiag1} now follows.
\end{proof}

 \begin{nota}{\emph{The fibers of the compactified Torelli morphism $\tgb$}}\label{S:fibers-tgb}

The aim of this subsection is to recall the description of the fibers of the compactified Torelli morphism
$\tgb:\MMgb(k)\to \AAgb(k) $ obtained in \cite{CV2}.
Before doing that, we need to recall some definitions.

\begin{defi}\label{D:sep-blocks}
Given a stable curve $X$ of genus $g$, consider its partial normalization $\tau:\wh{X}\to X$ at the separating nodes of $X$, i.e. the nodes $n$ of $X$ such that the partial normalization
of $X$ at $n$ is disconnected.
Write
$$\wh{X}:=X_1\coprod \cdots \coprod X_s, $$
where $s\in \bbN$ and each $X_i$ is a connected nodal curve. We call the curves $\{X_1,\cdots, X_s\}$ the \emph{separating blocks}Ê of $X$.
\end{defi}

Note that the separating blocks of $X$ are connected nodal curves free from separating nodes, which are however not stable in general.
Consider one of the separating blocks $X_i$. If $X_i$ has arithmetic genus $p_a(X_i)$ equal to zero, then $X_i\cong \bbP^1$.
Otherwise, $X_i$  is \emph{semistable}, i.e. it is a  connected nodal curves such that its canonical line $\omega_{X_i}$
has non-negative (possibly zero) degree on each irreducible component of $X_i$. If, moreover, $p_a(X_i)\geq 2$, then
 we can consider its \emph{stabilization}, denoted by
$\ov{X}_i$, which is the image of $X_i$ under the map given   by $|\omega_{X_i}^m|$ for $m$ sufficiently large (indeed any $m\geq 3$ suffices).
It is easy to see that $\ov{X}_i$ is obtained from $X_i$ by contracting to a node  all the exceptional subcurves $E\subset X_i$, i.e. subcurves $E\cong \bbP^1$ such that
$E$ intersect the complementary subcurve $\ov{X_i\setminus E}$ in two points.  We can extend the definition of the stabilization $\ov{X}_i$ to the case where
$p_a(X_i)=1$ as it follows: if $X_i$ is smooth then we set $\ov{X}_i=X_i$; if $X_i$ is not smooth (which happens exactly when
$X_i$ is a cycle of rational smooth curves)  then we set $\ov{X}_i$ be equal to the rational irreducible curve with one node.


For a nodal curve $X$ without separating nodes, we can partition the set $X_{\rm sing}$ of nodes of $X$ into C1-sets
as in  \cite[Lemma-Definition 2.1.1]{CV2}.

\begin{defi}\label{D:C1-part}
Let $X$ be a connected nodal curve free from separating nodes. A \emph{separating pair} $\{n_1,n_2\}$ of nodes of $X$ is a pair consisting of two nodes $n_1$ and $n_2$ of $X$
such that the partial normalization of $X$ at $n_1$ and $n_2$ is disconnected.

Being a separating pair of nodes is an equivalence relation on the set of nodes $X_{\rm sing}$ of $X$ and we call the associated equivalence classes the
 \emph{C1-sets} of $X$. We denote by $\Set X$ the collection of all C1-sets of $X$.

\end{defi}
Note that the C1-sets of $X$ correspond exactly to the coparallel classes (see Definition \ref{D:3conn}) of edges in the dual graph $\Gamma_X$ of $X$.

We now recall the definition of C1-equivalence introduced in \cite[Def. 2.1.5]{CV2}.

\begin{defi}[C1-equivalence]
\label{D:C1}
Let $X$ and $X'$ be connected nodal curves free from separating nodes;
 denote by $\nu:X^{\nu} \to X$ and $\nu': X^{'\nu}\to X'$  their normalizations.
 $X$ and $X'$ are {\it {C1-equivalent}} if the following conditions hold
\begin{enumerate}[(A)]
\item\label{C1n} There exists an isomorphism $\phi: X^{\nu}\stackrel{\cong}{\to} X'^{\nu}$.
\item \label{C1S}
There exists a bijection between their C1-sets
$$
\psi:\Set X \stackrel{\cong}{\to} \Set X'
$$
such that $\phi(\nu^{-1}(S))=\nu'^{-1}(\psi(S))$.
\end{enumerate}
\end{defi}

The above C1-equivalence relation can be realized via a sequence of  twisting operations at pairs of separating nodes.

\begin{defi}[Twist-equivalence]\label{D:twists}
Let $X$ be a connected nodal curve free from separating nodes. Consider a separating pair $\{n_1,n_2\}$ of nodes of $X$. Denote by $\epsilon:Y\to X$ the partial normalization of $X$ at
$n_1$ and $n_2$, denote by $Y_1$ and $Y_2$ the connected components of $Y$  and let $\nu^{-1}(n_1)=\{p^1,p^2\}$
and $\nu^{-1}(n_2)=\{q^1, q^2 \}$  with $p^1, q^1\in Y_1$ and $p^2, q^2\in Y_2$. In particular $X$ is obtained from $Y$ by gluing $p_1$ with $p_2$ and $q_1$ with $q_2$;
or in symbols
$$X=\frac{Y}{\{p^1\sim p^2, q^1\sim q^2\}}.$$
The \emph{twist} of $X$ at $\{n_1,n_2\}$ is the curve $X'$ obtained from $Y$ by gluing $p^1$ with $q^2$ and $q^1$ with $p^2$, or in symbols:
$$X'=\frac{Y}{\{p^1\sim q^2, q^1\sim p^2\}}.$$

We say that two connected nodal curves free from separating nodes $X$ and $X'$ are \emph{twist-equivalent}Ê if $X'$ can be obtained from $X$ via a sequence of twisting at
separating pairs of  nodes.
\end{defi}

\begin{lemma}\label{L:C1-twist}
Let $X$ and $X'$ be connected nodal curves free from separating nodes. Then $X$ and $X'$ are C1-equivalent if and only if they are twist-equivalent.
\end{lemma}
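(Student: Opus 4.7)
The plan is to prove the two implications separately.

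\emph{Twist-equivalent $\Rightarrow$ C1-equivalent.} By induction on the length of the sequence of twists, it suffices to check that a single twist at a separating pair $\{n_1, n_2\}$ produces a C1-equivalent curve. Let $X' = Y / \{p^1 \sim q^2, q^1 \sim p^2\}$ be the twist of $X = Y / \{p^1 \sim p^2, q^1 \sim q^2\}$, where $Y = Y_1 \sqcup Y_2$ is the partial normalization of $X$ at $\{n_1, n_2\}$. Since $X$ and $X'$ are both obtained from the same $Y$ by gluing two pairs of smooth points, they share the same normalization, which gives condition (A) of Definition \ref{D:C1}. For condition (B), I would define a node correspondence between $X$ and $X'$ that sends each node of $Y$ to itself and sends $n_1, n_2$ in $X$ to the two new nodes of $X'$ obtained from the twisted gluings, and verify that this induces a bijection $\psi: \Set X \to \Set{X'}$ satisfying $\phi(\nu^{-1}(S)) = \nu'^{-1}(\psi(S))$. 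The essential point is that the twist operation corresponds at the level of the dual graph to a Whitney move across a $2$-edge cut, which preserves the cycle space and hence the coparallel classes (the C1-sets).

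\emph{C1-equivalent $\Rightarrow$ twist-equivalent.} Suppose $X$ and $X'$ are C1-equivalent via $\phi: X^\nu \stackrel{\cong}{\to} X'^\nu$ and $\psi: \Set X \stackrel{\cong}{\to} \Set{X'}$. The plan is to process one C1-set at a time. Fix $S \in \Set X$ with $S' := \psi(S)$. Via $\phi$, one identifies $\nu^{-1}(S)$ with $\nu'^{-1}(S')$ as a single set of $2|S|$ points; the nodes of $X$ lying over $S$ (respectively those of $X'$ lying over $S'$) partition this set into $|S|$ pairs, but the two partitions may disagree. I would show that these partitions can be matched by a sequence of twists at separating pairs contained entirely in $S$. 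Since distinct C1-sets are disjoint and, by the previous direction, each twist is itself a C1-equivalence (so it preserves the partition of nodes into C1-sets), performing this procedure independently on each C1-set transforms $X$ into a curve isomorphic to $X'$.

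The hardest step is the intra-C1-set argument: verifying that twists at pairs lying in a single C1-set $S$ act transitively on the set of admissible pairings of $\nu^{-1}(S)$. My strategy is to translate the problem to the dual graph and invoke Whitney's classical theorem on $2$-isomorphism of graphs, which states that two graphs with no separating edges are cyclically equivalent if and only if they are related by a sequence of Whitney twists at $2$-edge cuts. One then needs to refine Whitney's statement by showing that, when the two graphs share the same partition of edges into coparallel classes, the sequence of Whitney twists can be chosen with each $2$-edge cut contained in a single coparallel class; each graph-theoretic Whitney twist then lifts to a curve-theoretic twist in the sense of Definition \ref{D:twists} via the identifications provided by $\phi$ and $\psi$.
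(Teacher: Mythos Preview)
The paper does not give an argument; it simply cites \cite[Sec.~2.3.2]{CV2}. So there is no in-paper proof to compare against, and your proposal is an attempt to supply what the paper defers.

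Your first implication (twist-equivalent $\Rightarrow$ C1-equivalent) is correct and your sketch is adequate: a single twist keeps the normalization and, since a Whitney move preserves the cycle matroid of the dual graph, it preserves the coparallel classes, hence the C1-sets.

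For the converse, your overall architecture---reduce to one C1-set at a time and show that twists within a fixed C1-set act transitively on the admissible gluings---is exactly the right strategy, and it is also the strategy in \cite[Sec.~2.3.2]{CV2}. But two points in your execution need attention.

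First, the ``refinement'' of Whitney you propose is vacuous: any $2$-edge cut $\{e,f\}$ consists of coparallel edges by definition, so it automatically lies in a single coparallel class. So this is not where the difficulty is.

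Second, and more seriously, Whitney's theorem is a statement about graphs, not curves. Lifting a sequence of Whitney twists on $\Gamma_X$ to curve-theoretic twists on $X$ produces a curve that is twist-equivalent to $X$ and whose dual graph is isomorphic to $\Gamma_{X'}$; it does \emph{not} produce $X'$ itself, because the dual graph does not record which specific branch points are glued at each node. You still have to match the gluing data, and that is precisely the content of the lemma. So invoking Whitney does not close the argument; it only reduces to the special case ``same dual graph'', which is no easier.

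What \cite[Sec.~2.3.2]{CV2} actually uses, and what your sketch is missing, is the following structural fact about C1-sets: if $S$ is a C1-set with $|S|=k$, then the partial normalization of $X$ at $S$ has exactly $k$ connected components, each carrying exactly two of the $2k$ points of $\nu^{-1}(S)$, and $X$ is recovered by arranging these $k$ pieces in a cycle. Under C1-equivalence, $X$ and $X'$ give two (possibly different) cyclic arrangements of the \emph{same} $k$ two-pointed pieces. A twist at a pair $\{n_i,n_j\}\subset S$ is then exactly a reversal of an arc of this cycle, and it is an elementary combinatorial fact that arc reversals act transitively on all such cyclic arrangements. That is the transitivity statement you need, and it is proved directly, without Whitney.
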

\begin{proof}
This follows from the discussion in \cite[Sec. 2.3.2]{CV2}.
\end{proof}

With this definitions, we can now recall the description of the fibers of $\tgb$ obtained in \cite{CV2}.

\begin{fact}[Caporaso-Viviani]\label{F:Tor-stable}
Let $X, X'\in \MMgb(k)$ two stable curves of genus $g$.
Denote by $\{X_1, \cdots, X_r\}$ (resp. $\{X'_1,\cdots, X'_{r'}\}$)Ê the separating blocks of $X$ (resp. $X'$) that have arithmetic genus greater than zero.

The following are equivalent:
\begin{enumerate}[(i)]
\item \label{T:Tor-stable1} $\tgb(X)=\tgb(X')$.
\item \label{T:Tor-stable2} We have that $r=r'$ and, up to reordering  the separating blocks, we have that $\ov{X}_i$ is C1-equivalent to $\ov{X'}_i$ for each $1\leq i\leq r=r'$.
\item \label{T:Tor-stable3} We have that $r=r'$ and, up to reordering  the separating blocks, we have that $\ov{X}_i$ is twist-equivalent to $\ov{X'}_i$ for each $1\leq i\leq r=r'$.
\end{enumerate}
\end{fact}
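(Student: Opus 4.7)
The equivalence (ii)$\Leftrightarrow$(iii) is immediate from Lemma \ref{L:C1-twist}, since each stabilization $\ov{X}_i$ is by construction a connected nodal curve free from separating nodes. Hence the actual content is the equivalence (i)$\Leftrightarrow$(ii).

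The first step is to reduce to the case where both $X$ and $X'$ are stable curves free from separating nodes. Separating nodes of $X$ induce a decomposition of the generalized Jacobian $J(X)\cong \prod_i J(X_i)$ indexed by the separating blocks, and the Alexeev triple $(J(X)\curvearrowright \PXgb,\TXg)$ inherits a compatible product-type structure in which the factors corresponding to blocks $X_i\cong \bbP^1$ contribute trivially, while the contribution of each block with $p_a(X_i)\geq 1$ agrees with the contribution of its stabilization $\ov{X}_i$. The point is that contracting the exceptional chains of rational components in $X_i$ does not affect the moduli of $\omega$-semistable torsion-free rank-$1$ sheaves in degree $p_a(X_i)-1$, nor their theta loci. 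This reduces the claim to proving, for stable curves $Y$ and $Y'$ free from separating nodes, that $\tgb(Y)=\tgb(Y')$ if and only if $Y$ and $Y'$ are C1-equivalent.

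For the direction ($\Leftarrow$), I would invoke Lemma \ref{L:C1-twist} to reduce to a single twist at a separating pair of nodes $\{n_1,n_2\}$, and then prove invariance of the triple under such a twist. Both $Y$ and its twist $Y'$ have the same partial normalization $Z$ at $\{n_1,n_2\}$, hence the same generalized Jacobian. An explicit analysis of the boundary strata of the two compactified Jacobians coming from degenerations at $\{n_1,n_2\}$ then produces a canonical isomorphism $\overline{P^{g-1}_Y}\cong \overline{P^{g-1}_{Y'}}$ carrying $\Theta_Y$ to $\Theta_{Y'}$, the point being that a twist at a separating pair is induced by an involution of the two gluing branches which is invisible at the level of the compactified Alexeev triple.

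The converse ($\Rightarrow$) is the Torelli-type heart of the statement and is the main obstacle. The idea is to reconstruct enough of the geometry of $Y$ from $\tgb(Y)$ to pin down its C1-equivalence class. The abelian quotient of $J(Y)$ recovers, via the classical Torelli theorem applied componentwise, the normalization $\wt{Y}$ together with its pairs of marked gluing points. The Delaunay decomposition attached to $\tgb(Y)$ (Fact \ref{F:comp-tg}(iii)), together with Fact \ref{F:same-Del} and Whitney's $2$-isomorphism theorem, recovers the cyclic equivalence class of the $3$-edge-connectivization $\Gamma_Y^3$, while the finer combinatorics of the boundary stratification of $\overline{P^{g-1}_Y}$ recovers the partition of the nodes of $Y$ into their C1-sets (Definition \ref{D:C1-part}). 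Put together, these invariants amount to a C1-equivalence class in the sense of Definition \ref{D:C1}. The delicate point, and where the bulk of the work in \cite{CV2} lies, is to verify that the theta divisor $\Theta_Y$ produces no invariant finer than the C1-equivalence class: twist-equivalent curves must not be distinguished by $\Theta_Y$, which is already ensured by the $(\Leftarrow)$ direction, while non-C1-equivalent stable curves free from separating nodes must be separated by the full triple $(J(Y)\curvearrowright \overline{P^{g-1}_Y},\Theta_Y)$.
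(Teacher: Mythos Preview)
Your handling of (ii)$\Leftrightarrow$(iii) matches the paper exactly: both invoke Lemma~\ref{L:C1-twist}. For (i)$\Leftrightarrow$(ii), the paper's ``proof'' consists of a single citation to \cite[Thm.~2.1.7]{CV2}, since this Fact is a restatement of the main result proved there; you instead sketch the actual strategy of that paper. Your outline is broadly faithful to \cite{CV2}: the reduction to curves free from separating nodes via the product decomposition of the Alexeev triple, the invariance under a single twist for $(\Leftarrow)$, and the reconstruction of the C1-class from the triple for $(\Rightarrow)$ are all the correct ingredients.

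One small inaccuracy worth flagging: you write that ``the abelian quotient of $J(Y)$ recovers, via the classical Torelli theorem applied componentwise, the normalization $\wt{Y}$ together with its pairs of marked gluing points.'' The abelian quotient alone recovers only the unmarked components of $\wt{Y}$ (up to the usual hyperelliptic ambiguity); the gluing points are encoded not in the abelian quotient but in the extension class of the semiabelian variety $J(Y)$ and, more finely, in how the theta divisor restricts to the boundary strata of $\PXgb$. In \cite{CV2} this is precisely where the bulk of the work goes, and it is also where the C1-ambiguity enters, since the extension and theta data determine the set $\nu^{-1}(S)$ for each C1-set $S$ but not the individual pairing of branches within it. Your sketch is otherwise on target, and more informative than the paper's bare citation.
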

\begin{proof}
The equivalence $\eqref{T:Tor-stable1}\Leftrightarrow \eqref{T:Tor-stable2}$ is a restatement of  \cite[Thm. 2.1.7]{CV2}.
The equivalence $\eqref{T:Tor-stable2}\Leftrightarrow \eqref{T:Tor-stable3}$ follows from Lemma \ref{L:C1-twist}.
\end{proof}
\end{nota}


\begin{cor}\label{C:inj-tgb}
The compactified Torelli morphism $\tgb:\MMgb(k)\to \AAgb(k)$ is injective on the open subset of $\MMgb(k)$ consisting
of stable curves without separating nodes nor separating pairs of nodes.
\end{cor}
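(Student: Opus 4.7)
The plan is to deduce this corollary directly from Fact \ref{F:Tor-stable}, which already does the hard work. Let me sketch the reduction.

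First I would unpack what the open subset looks like in terms of the data entering Fact \ref{F:Tor-stable}. If $X \in \MMgb(k)$ has no separating nodes, then the partial normalization $\wh{X}\to X$ at the separating nodes is the identity, so $X$ has exactly one separating block, namely $X$ itself. Moreover, since $X$ is already stable and of arithmetic genus $g\geq 2$, the stabilization $\ov{X}_1$ coincides with $X$. The same remarks apply to any $X'$ in the open subset. Hence, for $X,X'$ in the given open subset, the condition in Fact \ref{F:Tor-stable}\eqref{T:Tor-stable2}--\eqref{T:Tor-stable3} reduces to saying that $X$ and $X'$ are themselves C1-equivalent, or equivalently, twist-equivalent.

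Next, I would use the twist-equivalence characterization, which makes the conclusion immediate. Indeed, a twist operation (in the sense of Definition \ref{D:twists}) can be performed only at a separating pair of nodes of the curve being twisted. Since $X$ has, by hypothesis, no separating pair of nodes, no twist can be applied to $X$ at all. Therefore, if $\tgb(X)=\tgb(X')$, the sequence of twists connecting $X$ to $X'$ furnished by Fact \ref{F:Tor-stable}\eqref{T:Tor-stable3} (via Lemma \ref{L:C1-twist}) has length zero, whence $X\cong X'$. This proves injectivity.

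As a consistency check, one can arrive at the same conclusion through the C1-equivalence definition: the assumption that $X$ (and $X'$) has no separating pair of nodes means all C1-sets in $\Set{X}$ and $\Set{X'}$ are singletons, so the bijection $\psi:\Set{X}\stackrel{\cong}{\to}\Set{X'}$ is simply a bijection between the node sets, and the compatibility $\phi(\nu^{-1}(S))=\nu'^{-1}(\psi(S))$ says that the isomorphism $\phi:X^\nu\stackrel{\cong}{\to}X'^\nu$ identifies the glueing data defining $X$ with the glueing data defining $X'$, hence descends to an isomorphism $X\cong X'$. There is no serious obstacle here: the result is essentially a direct corollary of Fact \ref{F:Tor-stable}, the only content being the translation of the two hypotheses (no separating nodes, no separating pair of nodes) into the vanishing of the two sources of nontriviality appearing in the description of the fibers.
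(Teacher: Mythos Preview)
Your proposal is correct and follows exactly the intended route: the paper states Corollary \ref{C:inj-tgb} immediately after Fact \ref{F:Tor-stable} with no separate proof, treating it as a direct consequence, and your argument spells out precisely that deduction (no separating nodes implies a single separating block equal to $X$ itself with $\ov{X}=X$; no separating pairs implies the twist-equivalence class of $X$ is trivial).
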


We end this subsection with a reformulation of Fact \ref{F:Tor-stable} in the case of curves $X$ free from separating nodes and not hyperelliptic (in the sense of \cite[Def. 3.9]{Cat}), i.e. such that there does not exist two smooth points $p, q\in X$ with $h^0(X,\calO_X(p+q))=2$.
Note that if $X$ is a stable curve of genus $g\geq 2$ free from separating nodes, then $\omega_X$ is base point free by \cite[Thm. D]{Cat} and hence the complete linear
system $|\omega_X|$ defines a morphism $\phi_{|\omega_X|}:X\to \bbP^{g-1}$ (well-defined only up to composing with a projectivity of $\bbP^{g-1}$),
called the \emph{canonical morphism}.
Using results of Catanese \cite{Cat} and Catanese-Franciosi-Hulek-Reid \cite{CFHR},
the image of the canonical morphism  is described as it follows.

\begin{thm}\label{F:can-im}
Let $X$ be a stable curve $X$ of genus $g\geq 2$ free from separating nodes and not hyperelliptic.
Then
$\phi_{|\omega_X|}(X)$ is the curve obtained from $X$ by identifying all the nodes belonging to the same
C1-set $S$ into a unique point which is moreover a seminormal singularity of multiplicity $2|S|$, i.e. analytically isomorphic to the origin in the union of the coordinates axes of $\bbA^n$
with $n=2|S|$.
\end{thm}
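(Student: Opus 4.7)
My plan is to combine \cite[Thm. D]{Cat} with results of Catanese-Franciosi-Hulek-Reid \cite{CFHR}. First I would invoke \cite[Thm. D]{Cat}: since $X$ is stable and free from separating nodes, the dualizing sheaf $\omega_X$ is base point free, so $\phi_{|\omega_X|}: X \to \bbP^{g-1}$ is a morphism. The non-hyperelliptic hypothesis together with the results of \cite{CFHR} then gives that $\phi_{|\omega_X|}$ is birational onto its image $Y := \phi_{|\omega_X|}(X)$, reducing the theorem to describing the fibers of $\phi_{|\omega_X|}$ and the local structure of $Y$.

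The next step is to analyze the fibers via Riemann-Roch on the Gorenstein curve $X$ and Serre duality: two points of $X$ are identified by $\phi_{|\omega_X|}$ exactly when a certain linear condition on sections of $\omega_X$ fails to be independent. The non-hyperellipticity hypothesis should rule out any identification involving a smooth point (either two smooth points, or a smooth point with a node, via the same computation performed on the partial normalization at the node). Hence only nodes can be identified, and the problem reduces to characterizing which pairs of nodes get collapsed.

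The central step of the plan is to prove that two nodes $n_1, n_2$ are identified by $\phi_{|\omega_X|}$ if and only if $\{n_1, n_2\}$ is a separating pair, i.e.\ $n_1$ and $n_2$ belong to the same C1-set. The forward direction I would handle with a residue argument: if $\{n_1, n_2\}$ separates $X$ into two subcurves $Y_1, Y_2$, then every section of $\omega_X$ restricts on each $Y_i$ to a differential with at most simple poles at $n_1, n_2$ whose residues sum to zero by the residue theorem; this relation forces $\phi_{|\omega_X|}(n_1) = \phi_{|\omega_X|}(n_2)$, and iterating over all nodes of a C1-set collapses them to a single point. The reverse direction, together with the fact that distinct C1-sets have distinct images, will follow from the sharpened injectivity of the canonical morphism outside such configurations, as established in \cite{CFHR}.

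Finally, to compute the local model of $Y$ at the image of a C1-set $S$ of size $k$, I would note that the preimage in the normalization $\wt{X}$ consists of $2k$ distinct smooth points, and the composition $\wt{X} \to X \to Y$ is an immersion at each of them. The main obstacle is then to show that the $2k$ tangent directions at the common image point are linearly independent in $\bbP^{g-1}$, which amounts to proving surjectivity of the restriction map from $H^0(X, \omega_X)$ onto the first-order jets of $\omega_X$ along $S$; this can be handled via Serre duality combined with a dimension count, using non-hyperellipticity to rule out any special configuration. Once this linear independence is verified, the image point is analytically isomorphic to the transverse union of $2k$ coordinate lines through the origin in $\bbA^{2k}$, which is a seminormal singularity of multiplicity exactly $2k$, as claimed.
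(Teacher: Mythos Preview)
Your proposal is correct and the residue argument for collapsing C1-sets is exactly the mechanism at work. However, your route is considerably more hands-on than the paper's. The paper's proof consists entirely of three citations: \cite[Thm.~3.6]{CFHR} gives that $\phi_{|\omega_X|}$ is an isomorphism away from the nodes lying in nontrivial C1-sets; \cite[Thm.~E]{Cat} gives precisely that nodes in the same C1-set are collapsed to a single point while distinct C1-sets have distinct images; and \cite[Rmk.~3.8]{Cat} identifies the local analytic type of the image singularity as the union of coordinate axes in $\bbA^{2|S|}$. In other words, Catanese already packaged your residue computation and your tangent-direction independence argument into results that the paper simply quotes.

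What your approach buys is transparency: the residue-theorem explanation for why a separating pair of nodes must collapse is the conceptual heart of the matter, and your jet-surjectivity plan makes it clear why the singularity is seminormal rather than something worse. What the paper's approach buys is brevity and certainty, since the delicate converse direction (two nodes \emph{not} in the same C1-set remain separated) and the tangent independence are not entirely trivial to verify from scratch and are already handled carefully in \cite{Cat} and \cite{CFHR}. If you carry out your plan, the main place to be careful is the reverse implication and the jet computation in the final step; both ultimately reduce to the same $h^0$ vanishing that Catanese proves, so you may find yourself reproducing his argument anyway.
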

\begin{proof}
The canonical morphism $\phi_{|\omega_X|}$ is an isomorphism away from the separating nodes of $X$, as it follows from the proof of \cite[Thm. 3.6]{CFHR}.
Moreover from \cite[Thm. E]{Cat} it follows that $\phi_{|\omega_X|}$ sends all the nodes belonging to a C1-set $S$ into the same point $p_S$ and, moreover, that
$p_S\neq p_{S'}$ if $S$ and $S'$ are two distinct C1-sets. Finally, from \cite[Rmk. 3.8]{Cat}, it follows that each point $p_S$ is analytically isomorphic to the origin in the union of
the coordinates axes of $\bbA^n$  with $n=2|S|$.
\end{proof}

 \begin{thm}\label{T:Tor-canonical}
Let $X, X'\in \MMgb(k)$ two stable curves of genus $g$ free from separating nodes and not hyperelliptic. Then
$$\tgb(X)=\tgb(X')\Leftrightarrow \phi_{|\omega_X|}(X)\cong \phi_{|\omega_{X'}|}(X').$$
 \end{thm}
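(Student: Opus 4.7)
The plan is to combine Fact \ref{F:Tor-stable} with Theorem \ref{F:can-im}. Since $X$ and $X'$ are stable curves of genus $g\geq 2$ free from separating nodes, each of them is its own (unique) separating block with arithmetic genus $g\geq 2$, and the stabilization of a stable curve is the curve itself. Therefore the criterion of Fact \ref{F:Tor-stable} collapses to: $\tgb(X)=\tgb(X')$ if and only if $X$ is C1-equivalent to $X'$ in the sense of Definition \ref{D:C1}. Thus the theorem reduces to proving
$$X \equiv_{\rm C1} X' \iff \phi_{|\omega_X|}(X) \cong \phi_{|\omega_{X'}|}(X').$$

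The key geometric input is Theorem \ref{F:can-im}, which describes $\phi_{|\omega_X|}(X)$ as the quotient of $X$ obtained by identifying, for each C1-set $S\in \Set X$, the $|S|$ nodes of $S$ to a single seminormal point $p_S$ of multiplicity $2|S|$; the analogous description holds for $X'$. In particular, since partial seminormalization of a seminormal curve only collapses smooth points to seminormal multi-axes, the normalization of $\phi_{|\omega_X|}(X)$ is canonically isomorphic to $X^\nu$, and the preimage in $X^\nu$ of the singular point $p_S$ of $\phi_{|\omega_X|}(X)$ is precisely $\nu^{-1}(S)$, a set of cardinality $2|S|$.

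For the direction $(\Rightarrow)$, I would start from a C1-equivalence consisting of an isomorphism $\phi:X^\nu\stackrel{\cong}{\to} X'^\nu$ and a bijection $\psi:\Set X\to \Set X'$ with $\phi(\nu^{-1}(S))=\nu'^{-1}(\psi(S))$ for every $S$. By Theorem \ref{F:can-im}, $\phi_{|\omega_X|}(X)$ is the pushout of $X^\nu$ along the identifications $\nu^{-1}(S)\twoheadrightarrow \{p_S\}$, and similarly for $X'$; the compatibility of $\phi$ with $\psi$ shows that $\phi$ descends to an isomorphism between these two pushouts, yielding $\phi_{|\omega_X|}(X)\cong \phi_{|\omega_{X'}|}(X')$. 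For the converse $(\Leftarrow)$, I would take an isomorphism $\Phi$ of the canonical images, lift it uniquely (by the universal property of normalization) to an isomorphism $\phi:X^\nu\stackrel{\cong}{\to} X'^\nu$, and observe that $\Phi$ must send singular points to singular points; the bijection on singular points, transported through normalization, gives a bijection $\psi:\Set X\to \Set X'$ satisfying $\phi(\nu^{-1}(S))=\nu'^{-1}(\psi(S))$, i.e.\ a C1-equivalence.

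The main subtlety lies in the converse: one must make sure that the correspondence sending a singular point $p_S$ of $\phi_{|\omega_X|}(X)$ to the set $\nu^{-1}(S)\subset X^\nu$ is intrinsic to the canonical image (and not merely an artefact of the construction), so that any abstract isomorphism $\Phi$ automatically respects it. This is guaranteed by the local description in Theorem \ref{F:can-im}: every singular point of $\phi_{|\omega_X|}(X)$ is analytically the union of the $n=2|S|$ coordinate axes in $\mathbb{A}^n$, whose partial normalization canonically separates the $2|S|$ branches, so the preimage of each singular point under $X^\nu \to \phi_{|\omega_X|}(X)$ is canonically determined by the analytic type. This observation closes the argument.
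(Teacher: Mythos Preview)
Your proposal is correct and follows essentially the same approach as the paper: both reduce via Fact \ref{F:Tor-stable} to the equivalence ``$X\equiv_{\rm C1} X'$ iff the canonical images are isomorphic,'' and both deduce this from Theorem \ref{F:can-im} by observing that $\phi_{|\omega_X|}(X)$ is obtained from $X^\nu$ by collapsing each $\nu^{-1}(S)$ to a seminormal point (so that the canonical image determines and is determined by the C1-equivalence data). The paper condenses your two directions into the single remark that seminormal curve singularities have no local moduli, hence the gluing is unique; your more explicit pushout/normalization argument unpacks exactly this point.
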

\begin{proof}
According to the above Fact \ref{F:can-im}, the curve $\phi_{|\omega_X|}(X)$ can be constructed from the normalization $X^{\nu}$ of $X$ by gluing together
the points $\nu^{-1}(S)$, for each C1-set $S$, into a seminormal singular point (note there is a unique way of performing this gluing, i.e. seminormal curve singularities
do not have local moduli).  Therefore, $\phi_{|\omega_X|}(X)$ depends only on the C1-equivalence class of $X$ and, conversely, we can recover $X$ up to C1-equivalence
from the curve $\phi_{|\omega_X|}(X)$. The Theorem now follows from Fact \ref{F:Tor-stable}.
\end{proof}

\section{The anticontinuity of the reduction maps}\label{S:anti-redmap}

The aim of this Section is to prove the anticontinuity of the reduction maps appearing in the diagram \eqref{E:fund-diag}.
Indeed, this will follow from Corollary \ref{C:red-proper} which says that the same is true for any proper stack.

Recall that, as usual (see \ref{S:notations}),  we fix a complete DVR $R$  with maximal ideal $\m$ and we assume that
its residue field $k:=R/\m$ is algebraically closed. 
Given an element $x\in R$, we denote by $\ov{x}\in k$ its reduction modulo the maximal ideal $\m$.

The following well-known lemma is the key result for what follows.

\begin{lemma}\label{L:redAn}
For any positive integer $n$ consider the reduction map
$$
\begin{aligned}
\red: R^n& \longrightarrow k^n \\
\un x=(x_1,\cdots, x_n) & \mapsto \red(\un x):=(\ov{x}_1,\cdots, \ov{x}_n).
\end{aligned}
$$
If we put the non-archimedean topology on $R^n$ and the Zariski topology on $k^n$, then the reduction map $\red$ is \emph{anticontinuous}, i.e. the inverse image of a closed subset
is an open subset or, equivalently, the inverse image of an open subset is a closed subset.
\end{lemma}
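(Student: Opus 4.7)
The plan is to reduce the statement to a single polynomial condition and then exploit the crucial (and perhaps counterintuitive) fact that the maximal ideal $\m \subset R$ is \emph{open} in the non-archimedean topology. Since a Zariski-closed subset of $k^n$ has the form $V(I)$ for an ideal $I \subset k[x_1,\ldots,x_n]$, and since $k[x_1,\ldots,x_n]$ is Noetherian, we may write $V(I) = V(\bar f_1,\ldots,\bar f_m)$ for finitely many polynomials $\bar f_i$. Then
$$\red^{-1}(V(I)) = \bigcap_{i=1}^m \red^{-1}(V(\bar f_i)),$$
so it suffices to show each $\red^{-1}(V(\bar f_i))$ is open in $R^n$; the result then follows since finite intersections of opens are open.

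Next I would lift each $\bar f_i$ to a polynomial $f_i \in R[x_1,\ldots,x_n]$ via any set-theoretic section of $R \twoheadrightarrow k$. By definition of the reduction map, for $\un x \in R^n$ one has $\bar f_i(\red(\un x)) = \overline{f_i(\un x)}$, and the latter vanishes in $k$ precisely when $f_i(\un x) \in \m$. Therefore
$$\red^{-1}(V(\bar f_i)) = f_i^{-1}(\m),$$
where I regard $f_i$ as a function $R^n \to R$.

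The two key observations that finish the argument are then: first, the polynomial map $f_i: R^n \to R$ is continuous for the non-archimedean topologies, because the norm $|\cdot|$ is submultiplicative and the non-archimedean triangle inequality makes addition and multiplication continuous, so any polynomial in the coordinates is continuous; second, the maximal ideal $\m \subset R$ is open in the non-archimedean topology, since
$$\m = \{x \in R : |x| < 1\}$$
is literally the open unit ball around $0$ (this identification was already recorded in the Notations). Putting these together, $f_i^{-1}(\m)$ is an open subset of $R^n$, which is exactly what is needed.

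The main conceptual point, and the only place where any care is required, is the second observation: whereas $\m$ is Zariski-closed in $\Spec R$, it becomes \emph{open} once $R$ is given the metric topology coming from $\val$. This is the mechanism that turns continuity of polynomials into anticontinuity of $\red$, and it is what will drive the more general statement (Corollary \ref{C:red-proper}) for proper stacks over $\Spec R$ in the following paragraphs.
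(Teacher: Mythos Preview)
Your proof is correct and follows essentially the same approach as the paper: write a Zariski closed set as a finite intersection of hypersurfaces $V(F_i)$, lift each $F_i$ to $R[x_1,\ldots,x_n]$, and identify $\red^{-1}(V(F_i))$ with the preimage of the open set $\m$ under the continuous polynomial evaluation map. The paper's proof is slightly terser (it asserts continuity of polynomial evaluation without justification and omits the appeal to Noetherianity), but the logic is identical.
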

\begin{proof}
Consider a Zariski closed subset $C\subseteq k^n$. By definition of the Zariski topology on $k^n$, this means that there exists a finite number of polynomials
$F_1,\cdots, F_r\in k[t_1,\cdots, t_n]$ such that
\begin{equation}\label{E:Zar-closed}
C=\bigcap_{i=1}^r V(F_i):=\bigcap_{i=1}^r \{\un z\in k^n\: : \: F_i(\un z)=0\}.
\end{equation}
For any $1\leq i\leq r$, we choose a polynomial $\wt{F}_i\in R[t_1,\cdots, t_n]$ whose reduction $\red(\wt{F}_i)$ is equal to $F_i$, where the reduction of a polynomial
with coefficients in $R$ is the polynomial with coefficients in $k$ obtained by reducing modulo $\m$ each of its coefficients.
For an element $\un x\in R^n$, we see that
\begin{equation}\label{E:red0}
0=F_i(\red(\un x))=\red(\wt{F}_i(\un x))\Leftrightarrow |\wt{F}_i(\un x)|<1.
\end{equation}
For any polynomial $F\in R[t_1,\cdots, t_n]$, consider the evaluation function
$$\begin{aligned}
\Phi_{F}: R^n & \longrightarrow R \\
\un x & \mapsto \Phi_F(\un x):=F(\un x).
\end{aligned}
$$
Clearly, the function $\Phi_F$ is continuous  with respect to the non-archimedean topology on the domain and the codomain.
Using the evaluation functions, the equivalence \eqref{E:red0} can be rewritten as
\begin{equation}\label{E:inv-im}
\red^{-1}(V(F_i))=\Phi_{\wt{F}_i}^{-1}(\m).
\end{equation}
Therefore the inverse image of each $V(F_i)$ under the reduction map $\red$ is open (recall that $\m\subset R$ is open in the non-archimedean topology being equal to the open
ball centered at $0$ and of radius $1$, see \ref{S:notations});
the same is true for $\red^{-1}(C)$ because of the representation as in \eqref{E:Zar-closed}, which concludes the proof.
\end{proof}

We can now define the non-archimedean topology on the set $\calX(R):=\Hom(\Spec R, $ $ \calX)$ (resp. $\calX(K):=\Hom(\Spec K,  \calX)$) of $R$-valued (resp. $K$-points)
points of any stack $\calX$ of finite type over $\Spec R$.

\begin{defi}[Non-archimedean topology]\label{D:nonarchtop}
\noindent
\begin{enumerate}[(i)]
\item \label{D:nonarchtop1} Let $X\to \Spec R$ be an affine scheme of finite type over $\Spec R$ and let $j:X\hookrightarrow \bbA^N_R$ be a closed embedding into the $N$-dimensional
affine space over $R$ for some $N$.
 The non-archimedean topology on the set of $R$-valued points of $X(R)$ is the subspace topology with respect to the natural inclusion
$j_R: X(R)\subseteq \bbA_R^N(R)=R^N$ and the non-archimedean topology on $R^N$.

In a similar way, we define the non-archimedean topology on $X(K)$.
\item \label{D:nonarchtop2} Let $\calX\to \Spec R$ be a (Artin) stack  of finite type over $\Spec R$ and choose an atlas  $f:X\to \calX$ (i.e. $f$ a surjective and
smooth morphism and $X$ is a scheme over $\Spec R$) of $\calX$ with $X$  affine and of finite type over $\Spec R$. The non-archimedean topology on the set $\calX(R)$ of $R$-valued
points of $\calX$ is the quotient topology with respect to the natural surjective map $f_R: X(R)\to \calX(R)$ and the non-archimedean topology on $X(R)$.

In a similar way, we define the non-archimedean topology on $\calX(K)$.
\end{enumerate}
\end{defi}
We leave to the reader the straightforward verification that the above definitions do not depend on the choices made, i.e. the embedding $j$ in \eqref{D:nonarchtop1}
and the atlas $f$ in \eqref{D:nonarchtop2}.

We can now prove the main result of this section. The result is certainly well-known to the experts (see \cite[\S 3.4.1]{Tem} for the case of strictly $K$-affinoid spaces and \cite[\S 5.2.4]{Tem} for the case of formal 
schemes over $\Spec R$) but we include a proof for the lack of a suitable reference in the case of stacks of finite type over $\Spec R$.

\begin{thm}\label{T:anticon-red}
Let $\calX$ a stack of finite type over $\Spec R$ and consider the reduction map
$$\red_{\calX}: \calX(R):=\Hom(\Spec R,\calX)\to \calX(k):=\Hom(\Spec k, \calX)$$
induced by composing with the map $s:\Spec k\to \Spec R$.
If we put the non-archimedean topology on $\calX(R)$ and the Zariski topology on $\calX(k)$, then $\red_{\calX}$ becomes  an anticontinuous map.
\end{thm}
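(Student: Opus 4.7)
The plan is to reduce the statement to the key Lemma \ref{L:redAn} in two stages: first handle affine schemes of finite type over $\Spec R$, then transfer to arbitrary stacks via the atlas used to define the non-archimedean topology on $\calX(R)$.

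For the first stage, suppose $X$ is an affine scheme of finite type over $\Spec R$ and pick a closed embedding $j: X \hookrightarrow \bbA^N_R$. Taking $R$-points and $k$-points yields injections $j_R: X(R)\hookrightarrow R^N$ and $j_k:X(k)\hookrightarrow k^N$ which, by naturality, sit in a commutative square with the respective reduction maps on the rows. By Definition \ref{D:nonarchtop}\eqref{D:nonarchtop1}, $j_R$ is a topological embedding when $X(R)$ carries the non-archimedean topology, and by definition $j_k$ is a topological embedding when $X(k)$ carries its Zariski topology. Given a Zariski closed $C\subseteq X(k)$, write $C=j_k^{-1}(C')$ for some Zariski closed $C'\subseteq k^N$; then $\red_X^{-1}(C)=j_R^{-1}(\red_{\bbA^N}^{-1}(C'))$, which is open in $X(R)$ by Lemma \ref{L:redAn}. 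This establishes anticontinuity of $\red_X$ for affine $X$ of finite type over $\Spec R$.

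For the second stage, pick an atlas $f:X\to \calX$ with $X$ affine of finite type over $\Spec R$, as in Definition \ref{D:nonarchtop}\eqref{D:nonarchtop2}. By naturality we have a commutative square
\begin{equation*}
\xymatrix{
X(R) \ar[r]^{\red_X} \ar[d]_{f_R} & X(k) \ar[d]^{f_k} \\
\calX(R) \ar[r]^{\red_{\calX}} & \calX(k).
}
\end{equation*}
A key point is that $f_R$ is surjective: since $R$ is a complete DVR with algebraically closed residue field, $R$ is strictly Henselian, and any smooth surjective morphism to $\Spec R$ admits a section, so any $R$-point of $\calX$ lifts to an $R$-point of $X$. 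By Definition \ref{D:nonarchtop}\eqref{D:nonarchtop2} the non-archimedean topology on $\calX(R)$ is the quotient topology along $f_R$, and analogously the Zariski topology on $\calX(k)$ is the quotient topology along the (also surjective) map $f_k$. To verify anticontinuity of $\red_{\calX}$, let $C\subseteq \calX(k)$ be Zariski closed. By commutativity,
\begin{equation*}
f_R^{-1}(\red_{\calX}^{-1}(C))=\red_X^{-1}(f_k^{-1}(C)).
\end{equation*}
By definition of the quotient Zariski topology on $\calX(k)$, $f_k^{-1}(C)$ is Zariski closed in $X(k)$. By the first stage, $\red_X^{-1}(f_k^{-1}(C))$ is open in $X(R)$. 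Hence $f_R^{-1}(\red_{\calX}^{-1}(C))$ is open in $X(R)$, so by the defining property of the quotient topology $\red_{\calX}^{-1}(C)$ is open in $\calX(R)$, as desired.

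The expected obstacles are minor bookkeeping: verifying that the two topologies on $\calX(R)$ and $\calX(k)$ do not depend on the chosen atlas (which is standard and is assumed in Definition \ref{D:nonarchtop}), and checking the surjectivity of $f_R$, which relies crucially on the hypothesis that $R$ is complete with algebraically closed residue field. Once these formalities are in place, the proof is a purely formal consequence of Lemma \ref{L:redAn} via the commutative square above.
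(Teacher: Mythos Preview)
Your proposal is correct and follows essentially the same two-stage strategy as the paper: first reduce to the affine case via a closed embedding into $\bbA^N_R$ and invoke Lemma~\ref{L:redAn}, then pass to general stacks via an affine atlas and the quotient topology. The only notable difference is that you explicitly justify the surjectivity of $f_R$ using that $R$ is strictly Henselian, whereas the paper takes this surjectivity as given in Definition~\ref{D:nonarchtop}\eqref{D:nonarchtop2}; also, where you invoke that the Zariski topology on $\calX(k)$ is the quotient topology along $f_k$, the paper uses only the weaker fact that $f_k$ is continuous, which already suffices for the diagram chase.
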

\begin{proof}
We first prove the theorem in two special cases.

\un{Case I}: assume that $\calX=\bbA^n$ for some $n$.

In this case, the theorem reduces to Lemma \ref{L:redAn}.

\un{Case II}:Ê assume that $\calX$ is an affine scheme of finite type over $\Spec R$.

Choose a closed embedding $j:\calX\hookrightarrow \bbA^N$ for some $N$ as in Definition \ref{D:nonarchtop}\eqref{D:nonarchtop1}.
This induces a commutative diagram
\begin{equation}\label{E:diag-affine}
\xymatrix{
\calX(R)\ar[r]^{\red_{\calX}} \ar@{_{(}->}[d]_{j_R}& \calX(k)\ar@{^{(}->}[d]^{j_k}\\
\bbA^N(R) \ar[r]^{\red_{\bbA^N}}& \bbA^N(k), \\
}
\end{equation}
where the vertical arrows are injective.
If we put the non-archimedean topology on the sets on the left of the diagram and the Zariski topology on the sets on the right of the diagram, then we have that:
\begin{itemize}
\item $\red_{\bbA^N}$ is anticontinuous by Case I;
\item $j_R$ is continuous by Definition \ref{D:nonarchtop}\eqref{D:nonarchtop1};
\item $j_k$ is a closed continuous map since $j$ is an embedding.
\end{itemize}
Now, using the above facts, an easy diagram chase in \eqref{E:diag-affine}  shows that $\red_{\calX}$ is anticontinuous and Case II is proved.

Let us now consider an arbitrary stack $\calX$  of finite type over $\Spec R$.
Choose an atlas $f:X\twoheadrightarrow \calX$ with $X$ affine and of finite type over $\Spec R$ as in Definition \ref{D:nonarchtop}\eqref{D:nonarchtop2}.
This induces a commutative diagram
\begin{equation}\label{E:diag-atlas}
\xymatrix{
X(R) \ar[r]^{\red_{X}} \ar@{->>}[d]^{f_R}& X(k) \ar@{->>}[d]^{f_k}, \\
\calX(R)\ar[r]^{\red_{\calX}} & \calX(k)\\
}
\end{equation}
where the vertical arrows are surjective.
If we put the non-archimedean topology on the sets on the left of the diagram and the Zariski topology on the sets on the right of the diagram, then we have that:
\begin{itemize}
\item $\red_{X}$ is anticontinuous by Case II;
\item $f_R$ is a quotient map by Definition \ref{D:nonarchtop}\eqref{D:nonarchtop2};
\item $f_k$ is a  continuous map because it is induced by the morphism of stacks $f$.
\end{itemize}
Now, using the above facts,  an easy diagram chase in \eqref{E:diag-atlas} shows that $\red_{\calX}$ is anticontinuous, q.e.d.

\end{proof}

In the case of proper stacks $\calX$ over $\Spec R$, we can extend the reduction map to the set of $K$-valued points.

\begin{cor}\label{C:red-proper}
Let $\calX$ be a proper stack over $\Spec R$. Then the reduction map $\red_{\calX}$ can be extended to a map $\wt{\red}_{\calX}:\calX(K)\to \calX(k)$ as in the following diagram
$$\xymatrix{
\calX(R)\ar[r]^{\red_{\calX}} \ar@{^{(}->}[d]& \calX(k)\\
\calX(K)\ar[ru]_{\wt{\red}_{\calX}}
}
$$
Moreover, $\wt{\red}_{\calX}$ is anticontinuous with respect to  the non-archimedean topology on $\calX(K)$ and the Zariski topology on $\calX(k)$.
\end{cor}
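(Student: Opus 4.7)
The plan is to construct $\wt{\red}_\calX$ via the valuative criterion of properness for stacks and then to deduce its anticontinuity by reducing, via base change to finite extensions of $R$, to Theorem \ref{T:anticon-red}.

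First, I would define $\wt{\red}_\calX$ as follows. Given a morphism $\phi_K : \Spec K \to \calX$, properness of $\calX$ together with the valuative criterion of properness for stacks yields a finite extension $K \subseteq K'$, with valuation ring $R'$ (whose residue field is still $k$, since $k$ is algebraically closed), and a morphism $\phi_{R'} : \Spec R' \to \calX$ extending the composition $\Spec K' \to \Spec K \xrightarrow{\phi_K} \calX$. One sets $\wt{\red}_\calX(\phi_K) := \phi_{R'} \circ s$, where $s : \Spec k \hookrightarrow \Spec R'$ denotes the closed immersion. Independence from the choice of $K'$ would follow by taking a common overfield of two such extensions and applying the uniqueness clause of the valuative criterion, mimicking the proof of Lemma-Definition \ref{D:red-curves}; note that when $\phi_K$ already lifts to $\Spec R$ this recovers $\red_\calX(\phi_K)$, so $\wt{\red}_\calX$ genuinely extends $\red_\calX$.

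For anticontinuity, let $C \subseteq \calX(k)$ be Zariski-closed and fix $\phi_K \in \wt{\red}_\calX^{-1}(C)$; the goal is to exhibit a non-archimedean open neighborhood of $\phi_K$ in $\calX(K)$ mapping into $C$. I would choose a finite extension $K \subseteq K'$ with valuation ring $R'$ producing an $R'$-point $\phi_{R'}$ of $\calX$ extending $\phi_K$. The base change $\calX_{R'} := \calX \times_{\Spec R} \Spec R'$ is proper over $\Spec R'$, with $\calX_{R'}(R') = \calX(R')$ and $\calX_{R'}(k) = \calX(k)$, so applying Theorem \ref{T:anticon-red} to $\calX_{R'}$ guarantees that $\red_{\calX_{R'}}^{-1}(C)$ is a non-archimedean open neighborhood of $\phi_{R'}$ in $\calX(R')$.

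The remaining step, which I expect to be the main technical obstacle, is to transfer this open set back to a non-archimedean open neighborhood of $\phi_K$ in $\calX(K)$. I would fix an affine atlas $f : U \to \calX$ of finite type over $\Spec R$, together with a closed embedding $U \hookrightarrow \bbA^N_R$, and exploit the strict Henselianity of $R'$ (a complete DVR with algebraically closed residue field) to lift $\phi_{R'}$ to an element $\psi_{R'} \in U(R')$ via smoothness of $f$. The key commutative square then relates $U(R')$, $U(K')$, $\calX(R')$ and $\calX(K')$: the vertical quotient maps define the relevant non-archimedean topologies, while the horizontal inclusions (generic fiber) are continuous, as this is immediate at the ambient level $R'^N \hookrightarrow K'^N$. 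The subtle point is that a $K$-point of $\calX$ need not lift globally along $f$, so comparing the topology on $\calX(K)$ with that on $\calX(K')$ requires verifying that the base-change map $\calX(K) \to \calX(K')$ is continuous, which reduces via the atlas to continuity of the inclusion $K^N \hookrightarrow K'^N$ — a direct check. Pulling $\red_{\calX_{R'}}^{-1}(C) \subseteq \calX(R') \subseteq \calX(K')$ back to $\calX(K)$ through this chain would then yield the desired open neighborhood of $\phi_K$, via a diagram chase parallel to the one in the proof of Theorem \ref{T:anticon-red}.
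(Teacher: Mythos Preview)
Your construction of $\wt{\red}_\calX$ via the valuative criterion is correct. For anticontinuity, however, your pointwise strategy differs from the paper's and the final transfer step contains a genuine gap.

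The paper argues globally: it forms $\bigcup_{L} \calX(L)$ and $\bigcup_{L} \calX(R_L)$ over \emph{all} finite extensions $K \subseteq L$, each endowed with the finest topology making the natural inclusions continuous, and observes that the generic-fiber map $\eta: \bigcup_{L} \calX(R_L) \to \bigcup_{L} \calX(L)$ is a homeomorphism by the valuative criterion. Then $\wt{\red}_\calX$ is the composite $\calX(K) \hookrightarrow \bigcup_{L} \calX(L) \xrightarrow{\eta^{-1}} \bigcup_{L} \calX(R_L) \xrightarrow{\ov{\red}_\calX} \calX(k)$, and anticontinuity follows at once from Theorem~\ref{T:anticon-red} applied simultaneously to every $\calX(R_L)$.

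In your approach, fixing a single extension $K'$ per point, the problem lies in the last step. Having produced the open set $W := \red_{\calX_{R'}}^{-1}(C) \subseteq \calX(R')$, you want its preimage in $\calX(K)$ under $\calX(K) \to \calX(K')$ to be open; for this you need $W$ to be open in $\calX(K')$, not merely in $\calX(R')$. The diagram chase in Theorem~\ref{T:anticon-red} does not furnish this: there, the argument hinges on one vertical arrow being the \emph{quotient} map defining the target topology, but in your square relating $U(R')$, $U(K')$, $\calX(R')$, $\calX(K')$ no arrow plays that role relative to $\calX(K')$. Concretely, what you would need is that $\calX(R')$ is open in $\calX(K')$ and carries the subspace topology --- equivalently, that the atlas map $f_{K'}: U(K') \to \calX(K')$ is \emph{open}. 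This is a nontrivial statement (essentially a non-archimedean implicit function theorem for smooth morphisms) and is not addressed in your proposal. The paper's union-over-all-extensions device sidesteps this entirely, since it never needs to compare the topologies of $\calX(R')$ and $\calX(K')$ for any fixed $K'$.
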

\begin{proof}
Let us denote by $\calE$ the set of all the finite degree extensions $K\subseteq L$. For each $L\in \calE$, we denote by $R_L$ the valuation ring of $L$ with respect to the
unique extension of the valuation $\val$ on $K$ to a valuation $\val_L$ on $L$ (see \ref{S:notations}).

Since $K$ is complete with respect to the valuation $\val$,  for each $L\in \calE$ there is a
unique extension of the valuation $\val$ on $K$ to a valuation $\val_L$ on $L$, which is moreover still complete. We denote by $R_L\subset L$ the associated valuation ring.
Since $k$ is algebraically closed, the residue field of each of the rings $R_L$ (for $L\in \calE$) is equal to $k$. Therefore, we get a diagram
\begin{equation}\label{E:union-diag}
\bigcup_{L\in \calE} \calX(L)\stackrel{\eta}{\longleftarrow} \bigcup_{L\in \calE}\calX(R_L) \stackrel{\ov{\red}_{\calX}}{\longrightarrow }\calX(k).
\end{equation}
We endow the sets appearing in \eqref{E:union-diag} with the following topologies:  on $\calX(k)$ we put the Zariski topology; on $\bigcup_{L\in \calE} \calX(L)$
we put the finest topology for which all the inclusions $\calX(L) \hookrightarrow  \bigcup_{L\in \calE} \calX(L)$ are continuous with respect to the non-archimedean topology on $\calX(L)$;
the topology on  $\bigcup_{L\in \calE}\calX(R_L)$ is defined in a similar way. With respect to these topologies, the map $\eta$ is clearly continuous while the map $\ov{\red}_{\calX}$ is
anticontinuous by Theorem \ref{T:anticon-red}. Moreover the valuative criterion for properness of stacks applied to $\calX$ implies that $\eta$ is an homeomorphism.
We define a map $\wt{\red}_{\calX}:\calX(K)\to \calX(k)$  by composing the injection $\calX(K)\hookrightarrow \bigcup_{L\in \calE} \calX(L)$,
the homeomorphism $\eta^{-1}$  and the reduction map $\ov{\red}_{\calX}$. It is now clear the map $\wt{\red}_{\calX}$ satisfies all the required properties.

\end{proof}

\begin{proof}[Proof of Theorem \ref{T:mainthm2}]
It is easily checked that the reduction map $\red:\MMg(K)\to \MMgb(k)$ constructed in Lemma-Definition \ref{D:red-curves} is the restriction of the reduction
map $\wt{\red}_{\MMgb}:\MMgb(K)\to \MMgb(k)$ constructed in Corollary \ref{C:red-proper} to the open subset $\MMg(K)\subset \MMgb(K)$.  Therefore
the anticontinuity of $\red:\MMg(K)\to \MMgb(k)$ follows the anticontinuity of the reduction map  $\wt{\red}_{\MMgb}$ proved in Corollary \ref{C:red-proper}.

A similar argument applies to the reduction map $\red:\AAg(K)\to \AAgb(k)$ using the anticontinuity of the reduction map $\wt{\red}_{\AAgb}:\AAgb(K)\to \AAgb(k)$
(again from Corollary \ref{C:red-proper}).

\end{proof}

\section*{acknowledgments}

During the preparation of this manuscript and previous related papers, the author benefited from useful conversations with V. Alexeev, M. Baker, S. Brannetti, L. Caporaso, M. Chan, E. Cotterill, W. Gubler, M. Franciosi, M. Melo, J. Neves, S. Payne, J. Rabinoff. Moreover, the author wishes to thank the referee for useful comments and  for suggesting the references \cite{CR}, \cite{Ger}, \cite{Tem} and 
\cite{vdP}.

\vspace{0,1cm}

The author is also a member of the research center CMUC (University of Coimbra)
and he was supported by  the FCT project \textit{Espa\c cos de Moduli em Geometria Alg\'ebrica} (PTDC/MAT/111332/2009) and 
by the MIUR project  \textit{Spazi di moduli e applicazioni} (FIRB 2012).


\begin{thebibliography}{}

\bibitem[ACP]{ACP} D. Abramovich, L. Caporaso, S. Payne: \emph{The tropicalization of the moduli space of curves.}Ê  Preprint  arXiv:1212.0373v1.

\bibitem[Ale01]{Ale0} V. Alexeev: {\it On extra components in the functorial compactification
of $\AAg$.}  Moduli of abelian varieties (Texel Island, 1999),  1--9, Progr. Math. 195, Birkhuser, Basel, 2001.

\bibitem[Ale02]{Ale1}
V.  Alexeev: \emph{Complete moduli in the presence of semiabelian group action}. Ann. of Math. 155 (2002), 611--708.

\bibitem[Ale04]{Ale2} V. Alexeev: \emph{Compactified Jacobians and Torelli map}. Publ. RIMS, Kyoto Univ. 40 (2004), 1241--1265.

\bibitem[GAC2]{GAC2} E. Arbarello, M. Cornalba, P.A. Griffiths: \emph{Geometry of algebraic curves. Volume II.} With a contribution by Joseph Daniel Harris.
Grundlehren der Mathematischen Wissenschaften 268.  Springer, Heidelberg, 2011.

\bibitem[AMRT75]{AMRT} A. Ash, D. Mumford, M. Rapoport, Y. Tai: \emph{Smooth compactification of locally symmetric varieties}.
Lie Groups: History, Frontiers and Applications, Vol. IV. Math. Sci. Press, Brookline, Mass., 1975.

\bibitem[BPR]{BPR} M. Baker, S. Payne, J. Rabinoff: \emph{Nonarchimedean geometry, tropicalization, and metrics on curves}. Preprint available at  arXiv:1104.0320.

\bibitem[BLR90]{BLR} S. Bosch, W. L\"utkebohmert, M. Raynaud: \emph{N\'eron models}. Ergebnisse der Mathematik und ihrer Grenzgebiete (3) 21. Springer-Verlag, Berlin, 1990.

\bibitem[BMV11]{BMV} S. Brannetti, M. Melo, F. Viviani: \emph{On the tropical Torelli map}.
Adv. Math. 226 (2011), 2546-2586.

\bibitem[Bri07]{Bri} M. Brion: {\it Compactification de l'espace des modules des vari\'et\'es
ab\'eliennes principalement polaris\'ees (d'apr\`es V. Alexeev).}
S\'eminaire Bourbaki, Vol. 2005/2006,  Ast\'erisque  No. 311  (2007).



\bibitem[Cap12]{Cap1} L. Caporaso: \emph{Algebraic and tropical curves: comparing their moduli spaces.} In Handbook of Moduli, Volume I (editors: G. Farkas, I. Morrison), 
Advanced Lectures in Mathematics, Volume XXIV (2012), 119--160.



\bibitem[CV10]{CV1} L. Caporaso, F. Viviani: \emph{Torelli theorem for graphs and tropical curves}. Duke Math. Journal 153 (2010), 129--171.

\bibitem[CV11]{CV2} L. Caporaso, F. Viviani: \emph{Torelli theorem for stable curves}. J. Europ. Math. Soc.  (JEMS) 13 (2011), 1289--1329.

\bibitem[Cat82]{Cat} F. Catanese: {\em Pluricanonical-Gorenstein-curves.}
Enumerative geometry and classical algebraic geometry (Nice, 1981), pp. 51--95, Progr. Math., 24,
Birkh\"auser Boston, Boston, MA, 1982.

\bibitem[CFHR99]{CFHR} F. Catanese, M. Franciosi, K. Hulek, M. Reid: \emph{Embeddings of curves and surfaces.}
Nagoya Math. J. {\bf 154} (1999), 185--220.


\bibitem[Cha12]{Cha} M. Chan: \emph{Combinatorics of the tropical Torelli map}. Algebra and Number Theory 6 (2012), no. 6, 1133--1169.

\bibitem[CMV]{CMV} M. Chan, M. Melo, F. Viviani: \emph{Tropical Teichm\"uller and Siegel spaces.} To appear in Contemporary Mathematics 
(Proceedings of the CIEM Workshop on tropical geometry). Preprint available at  arXiv:1207.2443v2. 

\bibitem[CR93]{CR} T. Chinburg, R. Rumely: \emph{The capacity pairing.}  J. Reine angew. Math. 434 (1993), 1--44.

\bibitem[DM69]{DM} P. Deligne, D. Mumford: \emph{The irreducibility of the space of curves of given genus}. Inst. Hautes \'Etudes
Sci. Publ. Math. 36 (1969), 75--109.

\bibitem[FC90]{FC} G. Faltings, C. L.  Chai: \emph{Degeneration of abelian varieties}.
Ergebnisse der Mathematik und ihrer Grenzgebiete (3) 22, Springer-Verlag, Berlin, 1990.

\bibitem[Ger82]{Ger} L. Gerritzen: \emph{Die Jacobi-Abbildung ber dem Raum der Mumfordkurven.} Math. Ann. 261 (1982), 81--100.

\bibitem[KKMS73]{KKMS} G. Kempf, F.  Knudsen, D. Mumford, B.  Saint-Donat: \emph{Toroidal embeddings I}. Lecture Notes in Mathematics 339,
Springer-Verlag, 1973.

\bibitem[MZ07]{MZ} G. Mikhalkin, I. Zharkov: \emph{Tropical curves, their Jacobians and Theta functions}.
Contemporary Mathematics 465, Proceedings of the International Conference on Curves and Abelian Varieties in honor of Roy
Smith's 65th birthday (2007), 203-231.



\bibitem[Nam80]{NamT} Y. Namikawa: \emph{Toroidal compactification of Siegel spaces}.
Lecture Notes in Mathematics  812, Springer, Berlin, 1980.

\bibitem[Ner64]{Ner} A. N\'eron: \emph{Mod\'eles minimaux des vari\'et\'es ab\'eliennes sur les corps locaux et globaux.}
Inst. Hautes \'Etudes Sci. Publ. Math. 21 (1964).

\bibitem[Rab]{Rab} J. Rabinoff: \emph{The tropicalization of the Torelli map.} Talk at the conference ``Tropical Geometry", 
Edinburgh, 2th April 2012.


\bibitem[SGA3I]{SGA3I} \emph{Sch\'emas en groupes I.} S\'eminaire de G\'eom\'etrie Alg\'ebrique du Bois-Marie 1962--1964
(SGA 3 I). Dirig\'e para M. Demazure et A. Grothendick. Avec la collaboration de M. Artin, J. E. Bertin, P. Gabriel, M. Raynaud, J. P. Serre. Lecture Notes in Mathematics 151. Springer, Berlin-Heidelberg-New York (1970).

\bibitem[SGA7I]{SGA7I} \emph{Groupes de monodromie en g\'eom\'etrie alg\'ebrique I}. S\'eminaire de G\'eom\'etrie Alg\'ebrique du Bois-Marie 1967--1969 (SGA 7 I). Dirig\'e par A. Grothendieck. Avec la collaboration de M. Raynaud et D. S. Rim. Lecture Notes in Mathematics, Vol. 288. Springer-Verlag, Berlin-New York, 1972.

\bibitem[Tem]{Tem} M. Temkin: \emph{Introduction to Berkovich analytic spaces.} Preprint available at arXiv:1010.2235v2.


\bibitem[vdP83]{vdP} M. van der Put: \emph{Les fonctions t\^eta d'une courbe de Mumford.} Study group on ultrametric analysis, 9th year: 1981/82, No. 1, Exp. No. 10, 12 pp., 
Inst. Henri Poincar\'e, Paris, 1983.

\end{thebibliography}
\end{document}